\theoremstyle{thmstyleone}%
\newtheorem{theorem}{Theorem}%
\newtheorem{lemma}[theorem]{Lemma}%
\theoremstyle{thmstyletwo}%
\newtheorem{remark}{Remark}%
\theoremstyle{thmstylethree}%
\newtheorem{definition}{Definition}%
\newcommand{\abs}[1]{\lvert#1\rvert}
\newcommand{\mat}[1]{\bm{#1}}
\newcommand{\operator}[1]{\mathsf{#1}}
\newcommand{\vectorspace}[1]{\mathsf{#1}}
\newcommand{\ewleq}{\le}
\newcommand{\ewgeq}{\ge}
\newcommand{\qfleq}{\preceq}
\newcommand{\qfgeq}{\succeq}
\newcommand{\svdots}{\raisebox{3pt}{\scalebox{.75}{\vdots}}}
\newcommand{\sddots}{\raisebox{3pt}{\scalebox{.75}{$\ddots$}}}
\begin{document}

\title[Convergence of Numerical Integration as a Matrix Approximation]{On the Convergence of Numerical Integration
        as a Finite Matrix Approximation to Multiplication Operator}

\author*[1]{\fnm{Juha} \sur{Sarmavuori}}\email{juha.sarmavuori@aalto.fi}

\author[1]{\fnm{Simo} \sur{S\"arkk\"a}}\email{simo.sarkka@aalto.fi}
\equalcont{These authors contributed equally to this work.}

\affil*[1]{\orgdiv{Department of Electrical Engineering and Automation}, \orgname{Aalto University}, \orgaddress{\street{P.O. Box 12200}, \city{Espoo}, \postcode{FI-00076 AALTO}, \country{Finland}}}

\abstract{We study the convergence of a family of numerical integration methods where
  the numerical integration is formulated as a finite matrix approximation to
  a multiplication operator.
  For bounded functions, convergence has already been established using the theory of strong operator convergence.
  In this article, we consider unbounded functions and domains which pose
  several
  difficulties compared to the bounded case.
  A natural choice of method for this study is the theory of strong
  resolvent convergence which has previously been mostly applied to study
  the convergence of approximations of differential operators.
  The existing theory already includes convergence
  theorems that can be used as proofs as such for a limited class of functions
  and extended for a wider class of functions in terms of function
  growth or discontinuity.
  The extended results apply to all self-adjoint operators, not just
  multiplication
  operators.
  We also show how Jensen's operator inequality can be used to analyse the
  convergence of an improper numerical integral of a function bounded by an
  operator convex function.}

\keywords{numerical integration, multiplication operator, convergence, self-adjoint operator}

\pacs[MSC Classification]{65D30, 65D32, 47A58, 47B25, 47N40}

\maketitle

\section{Introduction}
\label{intro}
In this article, the aim is to theoretically analyse a class of numerical integration methods for approximate computation of integrals of the form
\begin{equation}
  I = \int_{\Omega} f(g(\mat{x}))\,w(\mat{x})\,d\mat{x},
  \label{eq:integral}
\end{equation}
$\Omega\subset\mathbb{R}^d$, $w : \Omega \to [0,\infty)$, $g : \Omega \to \mathbb{R}$, and $f : \mathbb{R} \to \mathbb{R}$. Without loss of generality, we assume that
$\int_\Omega w(\mat{x})\,d\mat{x}=1$. As we have recently shown in \cite{Sarmavuori+Sarkka:2019}, numerical integration of \eqref{eq:integral} can be formulated as a finite matrix approximation of the multiplication operator as follows (see also \cite{Gragg:1993,Simon:2009,Velazquez:2008,Bultheel+Cantero+Cruz-Barroso:2015}).

Let $\langle \cdot,\cdot \rangle$ be the inner product 
$\langle \phi,\psi \rangle =
  \int_{\Omega}\overline{\phi(\mat{x})}\,\psi(\mat{x})\,
  w(\mat{x})\,d\mat{x}
  $ which defines the norm
  $\|\phi\|=\sqrt{\langle \phi, \phi \rangle}$
  so that
  $\vectorspace{L}^2_w(\Omega)=\{\phi : \|\phi\|<\infty\}$
  is a complete Hilbert space. Let
 $\operator{M}[g]$ be the multiplication operator of multiplying with
  the real function $g$, that is, $\operator{M}[g]\,\phi=g\,\phi$
  almost everywhere
  for all
  $\phi\in\vectorspace{D}(\operator{M}[g])=\{\phi : \|g\,\phi\|<\infty\}$.
  It then turns out that we can define the operator function $f(\operator{M}[g])$ which allows us to express the integral \eqref{eq:integral} as follows:
\begin{equation}
  \int_{\Omega} f(g(\mat{x}))\,w(\mat{x})\,d\mat{x}=
  \langle 1,f(\operator{M}[g])\,1\rangle.
  \label{eq:integral-inner}
\end{equation}
Let us then consider a finite-dimensional subspace of $\vectorspace{L}^2_w(\Omega)$ spanned by orthonormal functions $1=\phi_0,\phi_1,\phi_2,\ldots,\phi_n$ and find the projection of the multiplication operator on this subspace (please note that the indexing starts from zero)
\begin{equation}
  \label{eq:matrix-elements}
  \left[
    \mat{M}_n[g]
    \right]_{i,j}
  =\langle \phi_i,\operator{M}[g]\,\phi_j\rangle=\langle \phi_i,g\,\phi_j\rangle
  =
  \int_\Omega \overline{\phi_i(\mat{x})}\,g(\mat{x})\,\phi_j(\mat{x})\,
w(\mat{x})\,d\mat{x}.
\end{equation}
The operator function $f(\operator{M}[g])$ appearing in \eqref{eq:integral-inner} can then be approximated via matrix function $f(\mat{M}_n[g])$ which leads to the approximation
\begin{align}
  \label{eq:numint-def}
  \int_{\Omega} f(g(\mat{x}))\,w(\mat{x})\,d\mat{x}=
  \langle 1,f(\operator{M}[g])\,1\rangle
  &\approx
  \left[
    f(\mat{M}_n[g])
    \right]_{0,0},\\
  \label{eq:product-convergence}
  \int_{\Omega} \prod_{i=0}^m f_i(g_i(\mat{x}))\,w(\mat{x})\,d\mat{x}
  =\langle 1,\prod_{i=0}^m f_i(\operator{M}[g_i])\,1\rangle
    &\approx
    \left[
      \prod_{i=0}^m f_i(\mat{M}_n[g_i])
    \right]_{0,0}.
\end{align}
As shown in \cite{Sarmavuori+Sarkka:2019},
when the inside function is $g(x)=x$ and the basis functions
are polynomials, we get the classical Gaussian quadratures.

In \cite{Sarmavuori+Sarkka:2019},
convergence was considered only
for
bounded outside function $f$
and a bounded inside function $g$.
In this article, our goal is to extend the convergence results to unbounded
functions. This allows us to consider improper integrals of functions
that attain infinity at either or both endpoints of the integration interval.
Our primary interest is in integrals, where infinity
is attained at an infinite endpoint or at both endpoints $\pm\infty$.
Proof of convergence for this type of integral is often
considerably more complicated than for bounded functions on bounded
intervals.
See
\cite{Uspensky:1928,Jouravsky:1928,Shohat+Tamarkin:1963,Freud:1971,Bultheel+Diaz-Mendoza+Gonzalez-Vera+Orive:1997,Bultheel+Diaz-Mendoza+Gonzalez-Vera+Orive:2000}
for examples of proofs of convergence on unbounded intervals 
for Gaussian quadratures 
and Gaussian quadratures generalised on rational functions.
Similar results have been proved also purely operator theoretically
for Gaussian quadratures on unbounded intervals for polynomially bounded
functions \cite[Sections~1.8~and~1.13]{Simon:2009}. 
We generalise this operator theoretic approach 
on a larger family of inside functions $g$ 
and basis functions
and under some conditions
to faster than polynomially growing outside functions $f$. 
We also consider improper integrals, where infinity is
attained at a finite endpoint.

As a starting point, we take the strong resolvent convergence
(or generalised strong convergence)
\cite{Kato:1995,Reed+Simon:1981,Segal+Kunze:1978,Weidmann:1980,Simon:2015}
that gives well-known convergence results for continuous functions and
characteristic functions.
We extend these results to Riemann--Stieltjes integrable functions
that are, by definition, bounded but can have infinitely many discontinuities.
We also prove convergence results for a class of linearly bounded functions,
that is, functions that are less than $a+b\,\abs{x}$ for some $a,b>0$.
These results apply to general self-adjoint operators, not just
multiplication operators and therefore also apply for more exotic
integrals that it is possible to formulate as a function of a self-adjoint operator 
\cite{Segal:1965}.

For Gaussian quadratures, the convergence proofs are often based on
an inequality of the following type (e.g.
\cite{Uspensky:1928,Jouravsky:1928,Bultheel+Diaz-Mendoza+Gonzalez-Vera+Orive:2000},
\cite[Chapter~IV]{Shohat+Tamarkin:1963}, 
\cite[Chapter~3, Lemma~1.5]{Freud:1971}):
\begin{equation}
  \sum_{k=0}^n w_k\,x_k^{2\,m}\leq \int_{-\infty}^\infty x^{2\,m}\,w(x)\,dx,
\label{eq:wx_ineq}
\end{equation}
that is, a Gaussian quadrature approximation with weights $w_k$ and nodes $x_k$
never exceeds the true value of the integral for even monomials $x^{2\, m}$.
We do not have this type of inequality in general but in special cases when
all matrix elements are positive. We give examples of such cases.
In those cases, we can still prove convergence for certain fast-growing
classes of functions such as exponentially bounded functions.

A similar inequality to \eqref{eq:wx_ineq} also arises from Jensen's operator inequality when the
outside function $f$ is an operator convex function.
We use this type of inequality to prove convergence of
improper integrals, where infinity of the integrand is attained at a
finite endpoint of the integration interval. This type of singularity
has been studied in, for example, \cite{Atkinson+Chien+Hansen:2021}.

The main contributions of this article are:
\begin{enumerate}
\item
  We use the theory of strong resolvent convergence
  for analysing the convergence of matrix method of numerical integration
  when the inside function is unbounded.
\item
  We extend the theory of convergence for functions of self-adjoint operators
  in a converging sequence from continuous and bounded functions to
  wider classes of discontinuous and unbounded functions.
\item
  We show theoretically and by a numerical example that convergence holds
  for a wider class of unbounded functions when the matrix elements are
  nonnegative.
\item
  We also show the convergence of improper integrals on a finite endpoint for
  functions bounded by an operator convex function by using Jensen's operator
  inequality.
\end{enumerate}

The structure of the article is as follows: the key concepts are introduced
in Section~\ref{sec:prelim}, the main results are presented in 
Section~\ref{sec:convergence}, numerical examples are presented
in Section~\ref{sec:numerical-example}, and the conclusions are drawn in 
Section~\ref{sec:conclusions}.

\section{Preliminaries}
\label{sec:prelim}

A self-adjoint operator is such operator that $\operator{A}=\operator{A}^*$.
When a function $g$ is a measurable real function, 
the maximal multiplication operator 
$\operator{M}[g]$, that is,
the multiplication operator with domain 
$\vectorspace{D}(\operator{M}[g])=\{\phi : \|g\,\phi\|<\infty\}$,
is self-adjoint.

We denote strong convergence as 
$\operator{A}_n\xrightarrow{s}\operator{A}$. Strong resolvent
convergence of a sequence of self-adjoint operators $\operator{A}_n$
to a self-adjoint operator $\operator{A}$, means that 
$(\operator{A}_n - z)^{-1}\xrightarrow{s}(\operator{A}-z)^{-1}$ 
for some nonreal $z$,
which
we denote as $\operator{A}_n\xrightarrow{srs}\operator{A}$
\cite{Kato:1995,Reed+Simon:1981,Segal+Kunze:1978,Weidmann:1980,Simon:2015}.
For a function
of a self-adjoint operator, we have the following useful theorem
about the convergence of functions of operators.
\begin{theorem}
  \label{thm:bounded-function-srs}
  If a sequence of self-adjoint operators $\operator{A}_n$ converges in strong
  resolvent sense to a self-adjoint operator $\operator{A}$, and $f$ is a bounded
  continuous function defined on $\mathbb{R}$, then $f(\operator{A}_n)$
  converges strongly to $f(\operator{A})$.
\end{theorem}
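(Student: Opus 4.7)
The plan is to proceed by escalating through three classes of functions: polynomial combinations of resolvents, functions in $C_0(\mathbb{R})$, and finally the desired bounded continuous functions.

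First, I would upgrade strong resolvent convergence at the single nonreal $z$ in the hypothesis to convergence at every nonreal $\zeta$. The first resolvent identity expresses $(\operator{A}_n-\zeta)^{-1}$ as $(\operator{A}_n-z)^{-1} + (\zeta-z)(\operator{A}_n-\zeta)^{-1}(\operator{A}_n-z)^{-1}$, and the uniform bound $\|(\operator{A}_n-\zeta)^{-1}\|\leq 1/\lvert\operatorname{Im}\zeta\rvert$ lets one iterate and pass to the limit. Since strong operator convergence of a uniformly norm-bounded sequence is preserved under finite sums and products, this gives $p(\operator{A}_n)\xrightarrow{s} p(\operator{A})$ for every polynomial $p$ in the resolvents at $i$ and $-i$.

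Second, I would invoke Stone--Weierstrass on the one-point compactification of $\mathbb{R}$: the real and imaginary parts of $x\mapsto(x-i)^{-1}$ separate points of $\mathbb{R}$ and have no common zero, so the $*$-algebra they generate is uniformly dense in $C_0(\mathbb{R})$. Combined with the operator-norm bound $\|f(\operator{A}_n)\|\leq \|f\|_\infty$ supplied by the spectral functional calculus, a standard three-epsilon argument upgrades strong convergence from resolvent polynomials to all $f\in C_0(\mathbb{R})$.

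Third, I would bridge from $C_0(\mathbb{R})$ to bounded continuous $f$ using a tightness argument against the scalar spectral measures. Fix $\psi\in\vectorspace{L}^2$ and a continuous cutoff $\chi_R\in C_c(\mathbb{R})$ with $\chi_R=1$ on $[-R,R]$ and $0\leq\chi_R\leq 1$, and split $f=f\chi_R+f(1-\chi_R)$. The first summand lies in $C_0(\mathbb{R})$ and is handled by the previous step. For the remainder, I would bound $\|f(1-\chi_R)(\operator{A}_n)\psi\|\leq \|f\|_\infty\,\|(1-\chi_R)(\operator{A}_n)\psi\|$ and write $\|(1-\chi_R)(\operator{A}_n)\psi\|^2=\|\psi\|^2-\langle\psi,(1-(1-\chi_R)^2)(\operator{A}_n)\psi\rangle$. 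Since $1-(1-\chi_R)^2=\chi_R(2-\chi_R)\in C_c(\mathbb{R})\subset C_0(\mathbb{R})$, the previous step yields convergence to $\|(1-\chi_R)(\operator{A})\psi\|^2$, which tends to $0$ as $R\to\infty$ by dominated convergence against the spectral measure of $\operator{A}$. Choosing $R$ first and then $n$ large completes the argument.

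The main obstacle is this final bridge: strong resolvent convergence gives no automatic control on the spectral mass of the $\operator{A}_n$ far from the origin, so the decomposition must be arranged so that the $C_0$ machinery carries all of the $n$-dependent work, while the tail is controlled by the sup-norm of $f$ and tightness of the limiting spectral measure alone. The first two steps are routine manipulations with the resolvent identity and Stone--Weierstrass, so conceptually the theorem reduces to arranging this last uniform-tail estimate.
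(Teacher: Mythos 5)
Your argument is, in essence, the standard proof of this theorem found in the references the paper cites in place of a proof (Reed--Simon, Theorem~VIII.20(b); Weidmann, Theorem~9.17): resolvent polynomials, then Stone--Weierstrass to reach $C_0(\mathbb{R})$, then a cutoff/tightness argument to reach bounded continuous $f$. Your second and third steps are correct as written; in particular the identity $\|(1-\chi_R)(\operator{A}_n)\psi\|^2=\|\psi\|^2-\langle\psi,(1-(1-\chi_R)^2)(\operator{A}_n)\psi\rangle$ with $1-(1-\chi_R)^2=\chi_R(2-\chi_R)\in C_c(\mathbb{R})$ is exactly the right device for controlling the spectral tails of the $\operator{A}_n$ using only the $C_0$ case together with tightness of the limiting spectral measure.

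The one step that does not work as literally described is the very first. Iterating the first resolvent identity extends strong convergence of $(\operator{A}_n-z)^{-1}$ only to points $\zeta$ in the \emph{same} half-plane as $z$: the Neumann series $(\operator{A}_n-\zeta)^{-1}=\sum_{k}(\zeta-z)^k(\operator{A}_n-z)^{-(k+1)}$ converges only for $\lvert\zeta-z\rvert<\lvert\operatorname{Im}z\rvert$, a disc that never reaches the real axis, so no amount of iteration carries you from $z=i$ to $\zeta=-i$. Since your Stone--Weierstrass step needs both $(x-i)^{-1}$ and $(x+i)^{-1}$ (their real and imaginary parts are what separate points), you must cross the axis by a different argument. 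The standard one for self-adjoint operators: $(\operator{A}_n-\bar z)^{-1}=\bigl((\operator{A}_n-z)^{-1}\bigr)^*$ converges weakly because the original sequence does, and the norms converge because the first resolvent identity at the pair $z,\bar z$ gives $\|(\operator{A}_n-\bar z)^{-1}\psi\|^2=(\operatorname{Im}z)^{-1}\operatorname{Im}\langle\psi,(\operator{A}_n-z)^{-1}\psi\rangle$; weak convergence together with convergence of norms yields strong convergence at $\bar z$. With that lemma inserted, your proof is complete and coincides with the cited one.
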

\begin{proof}
  See
  \cite[Theorem~VIII.20 (b)]{Reed+Simon:1981},
  \cite[Theorem~9.17]{Weidmann:1980}
  or \cite[Theorem~11.4]{Segal+Kunze:1978}.
\end{proof}

The spectral representation of a self-adjoint operator is
$%
  \operator{A} = \int_{\sigma(\operator{A})}t\,d\operator{E}(t),
$ %
where $\sigma(\operator{A})$ is the spectrum, and $\operator{E}(t)$ is the
spectral family of the operator $\operator{A}$
\cite{Stone:1932,Riesz+Nagy:1955,Akhiezer+Glazman:1993,Kato:1995,Halmos:1982,Kreyszig:1989,Reed+Simon:1981,Segal+Kunze:1978,Weidmann:1980,Schmudgen:2012,Simon:2015}.
For any function which is measurable
with respect to the spectral family, we have
\begin{equation}
  \label{eq:spectral-rep}
  f(\operator{A})=\int_{\sigma(\operator{A})}f(t)\,d\operator{E}(t).
\end{equation}
The spectral family can be defined as left or right
continuous for the parameter $t$ in strong convergence sense
\cite[Chapter~VI, Section~5.1]{Kato:1995}.
We adopt the usual convention of defining it as right continuous. Only points
of discontinuity of $\operator{E}(t)$ are the eigenvalues of $\operator{A}$
\cite[Theorem~7.23]{Weidmann:1980}. An \textit{eigenvalue}
of an operator is a value $\lambda\in\mathbb{C}$ that satisfies 
$%
  \lambda\,\phi=\operator{A}\,\phi.
$ %
  Function $\phi$ is called an \textit{eigenfunction} or an eigenvector.

An eigenvalue $\lambda$ of a multiplication operator $\operator{M}[g]$ has to
satisfy
\cite[p.~103, Example~1]{Weidmann:1980}
\begin{equation}
  \label{eq:eigenvalue}
  \int_S w(\mat{x})\,d\mat{x}>0
\end{equation}
for some set $S\subset\{\mat{x}\in\Omega : \lambda=g(\mat{x})\}$
and an eigenfunction has to be a scalar multiple of a characteristic function
$\chi_S(\mat{x})$.
The spectrum of the multiplication operator $\sigma(\operator{M}[g])$
for a real function $g$
is the essential range of the function $g$ 
\cite[Problem~67]{Halmos:1982}, 
\cite[Chapter~VIII.3, Proposition~1]{Reed+Simon:1981},
\cite[p.~103, Example~1]{Weidmann:1980},
that is, 
\begin{equation}
  \label{eq:essential-range}
  \sigma(\operator{M}[g])=
  \mathcal{R}(g)=
  \left\{
  y\in\mathbb{R} :
  \text{for all }\epsilon>0, 
  \int_{\{\mat{x}\in\Omega\,:\,\abs{g(\mat{x})-y}<\epsilon\}}
  w(\mat{x})\,d\mat{x}>0
  \right\}.
\end{equation}

For the convergence of the spectral families, we have the following theorem.
\begin{theorem}
  \label{thm:characteristic-convergence}
  Let $\operator{A}_n$ and $\operator{A}$ be self-adjoint operators with
  spectral families $\operator{E}_n(t)$ and $\operator{E}(t)$, respectively.
  If $\operator{A}_n\xrightarrow{srs}\operator{A}$, then we have the following
  equivalent results:
  \begin{enumerate}
    \item $\operator{E}_n(t)\xrightarrow{s}\operator{E}(t)$ when $t$ is
      not an eigenvalue of $\operator{A}$.
    \item For characteristic functions, we have
      $\chi_{[a,b]}(\operator{A}_n)\xrightarrow{s}\chi_{[a,b]}(\operator{A})$
      when $a$ and $b$ are not eigenvalues of $\operator{A}$.
  \end{enumerate}
\end{theorem}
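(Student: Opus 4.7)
The plan is to approximate the characteristic functions from above and below by bounded continuous functions and then invoke Theorem~\ref{thm:bounded-function-srs}. For part~(2), given $[a,b]$ and $\epsilon>0$, I construct piecewise-linear continuous functions $f_\epsilon^-\leq\chi_{[a,b]}\leq f_\epsilon^+$, with $f_\epsilon^+$ equal to $1$ on $[a,b]$ and vanishing outside $[a-\epsilon,b+\epsilon]$, and $f_\epsilon^-$ equal to $1$ on $[a+\epsilon,b-\epsilon]$ and vanishing outside $[a,b]$. The spectral functional calculus lifts these scalar inequalities to the operator sandwich
\begin{equation*}
  0 \leq f_\epsilon^-(\operator{A}_n) \leq \chi_{[a,b]}(\operator{A}_n) \leq f_\epsilon^+(\operator{A}_n) \leq \operator{I},
\end{equation*}
and similarly with $\operator{A}$ in place of $\operator{A}_n$.

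The key technical step is to turn this quadratic-form sandwich into a strong-operator estimate. For any operator $\operator{Q}$ with $0\leq\operator{Q}\leq\operator{I}$ one has the elementary bound $\|\operator{Q}\phi\|^2\leq\langle\phi,\operator{Q}\phi\rangle$. Applying this to the nonnegative contractions $\chi_{[a,b]}(\operator{A}_n)-f_\epsilon^-(\operator{A}_n)$ and $\chi_{[a,b]}(\operator{A})-f_\epsilon^-(\operator{A})$, and combining with the triangle inequality, bounds $\|(\chi_{[a,b]}(\operator{A}_n)-\chi_{[a,b]}(\operator{A}))\phi\|$ by $\|(f_\epsilon^-(\operator{A}_n)-f_\epsilon^-(\operator{A}))\phi\|$, which tends to zero as $n\to\infty$ by Theorem~\ref{thm:bounded-function-srs}, plus quadratic-form remainders of the form $\langle\phi,(f_\epsilon^+(\operator{A}_n)-f_\epsilon^-(\operator{A}_n))\phi\rangle^{1/2}$ and $\langle\phi,(f_\epsilon^+(\operator{A})-f_\epsilon^-(\operator{A}))\phi\rangle^{1/2}$. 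The first of these converges to the second as $n\to\infty$, again by Theorem~\ref{thm:bounded-function-srs} applied to the bounded continuous function $f_\epsilon^+-f_\epsilon^-$.

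To close the argument, observe that $f_\epsilon^+-f_\epsilon^-$ is supported on $[a-\epsilon,a+\epsilon]\cup[b-\epsilon,b+\epsilon]$ and converges pointwise as $\epsilon\to 0^+$ to $\chi_{\{a\}}+\chi_{\{b\}}$. By the spectral theorem together with bounded convergence, $f_\epsilon^+(\operator{A})-f_\epsilon^-(\operator{A})$ therefore converges strongly to $\chi_{\{a\}}(\operator{A})+\chi_{\{b\}}(\operator{A})$, which is the zero operator precisely because $a$ and $b$ are not eigenvalues of $\operator{A}$. A standard $\epsilon/3$-type argument (choose $\epsilon$ small first, then $n$ large) then finishes part~(2). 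Part~(1) is proved identically with one-sided piecewise-linear approximants straddling $t$; the equivalence of (1) and (2) follows from the identity $\chi_{[a,b]}(\operator{A})=\operator{E}(b)-\operator{E}(a^-)$ together with the fact that $\operator{E}(a^-)=\operator{E}(a)$ at non-eigenvalues.

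The main obstacle I anticipate is exactly the discontinuity of $\chi_{[a,b]}$, which prevents direct application of Theorem~\ref{thm:bounded-function-srs} and forces the gap between norm convergence and quadratic-form convergence to be bridged. The positive-contraction inequality $\|\operator{Q}\phi\|^2\leq\langle\phi,\operator{Q}\phi\rangle$ is the decisive tool, and the non-eigenvalue hypothesis on the endpoints is exactly the condition that makes the upper and lower continuous approximants pinch to the same strong limit of $\chi_{[a,b]}(\operator{A})$.
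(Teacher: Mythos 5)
The paper does not actually prove this theorem; it only cites standard references (Kato, Reed--Simon, Weidmann, Segal--Kunze, Simon), so there is no in-paper argument to compare against. Your proof is a correct, self-contained reconstruction of the standard argument used in those references: sandwich $\chi_{[a,b]}$ between continuous functions $f_\epsilon^-\leq\chi_{[a,b]}\leq f_\epsilon^+$, lift to operators by the functional calculus, convert the quadratic-form sandwich to a norm estimate via $\|\operator{Q}\phi\|^2=\langle\phi,\operator{Q}^2\phi\rangle\leq\langle\phi,\operator{Q}\phi\rangle$ for $0\qfleq\operator{Q}\qfleq\operator{I}$, apply Theorem~\ref{thm:bounded-function-srs} to $f_\epsilon^-$ and to $f_\epsilon^+-f_\epsilon^-$, and kill the remainder as $\epsilon\to 0^+$ by dominated convergence in the spectral measure together with the hypothesis that $a$ and $b$ are not eigenvalues, so that $\operator{E}(\{a\})=\operator{E}(\{b\})=0$. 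All of these steps check out. One minor imprecision: your closing remark that the equivalence of (1) and (2) follows from $\chi_{[a,b]}(\operator{A})=\operator{E}(b)-\operator{E}(a^-)$ and $\operator{E}(a^-)=\operator{E}(a)$ at non-eigenvalues is fine for the limit operator $\operator{A}$, but $a$ may perfectly well be an eigenvalue of $\operator{A}_n$, so $\operator{E}_n(a^-)\neq\operator{E}_n(a)$ in general; deducing (2) from (1) therefore needs an additional sandwich $\operator{E}_n(a-\delta)\qfleq\operator{E}_n(a^-)\qfleq\operator{E}_n(a)$ with $a-\delta$ a non-eigenvalue of $\operator{A}$ and $\delta\to 0$. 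Since you prove both parts directly by the same pinching argument, this gloss is not load-bearing and the proof stands.
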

\begin{proof}
  For 1, see \cite[Chapter~VIII, Theorem~1.15]{Kato:1995} or
  \cite[Theorem~9.19]{Weidmann:1980}. For 2, see
  \cite[Theorem~VIII.24 (b)]{Reed+Simon:1981},
  \cite[Theorem~11.4 e]{Segal+Kunze:1978}, or
  \cite[Chapter~7.2, problem~5]{Simon:2015}.
\end{proof}

A self-adjoint operator has an infinite matrix representation with
 matrix elements \eqref{eq:matrix-elements} if and only if
 the vectors
of the orthonormal
basis
$\phi_0,\phi_1,\ldots$ are dense in the underlying Hilbert space, and
\begin{equation}
  \label{eq:infinite-matrix-condition}
  \|\operator{A}\,\phi_i\|<\infty
\end{equation}
for all $i=0,1,\ldots$ (see \cite[Theorem~3.4]{Stone:1932} or 
\cite[Section~47]{Akhiezer+Glazman:1993}).
For a multiplication operator $\operator{M}[g]$, this
is equivalent to
\begin{equation}
  \label{eq:mult-mat}
  \int_{\Omega} 
  \abs{g(\mat{x})\,\phi_i(\mat{x})}^2\,w(\mat{x})\,d\mat{x}<\infty
\end{equation}
for all $i=0,1,\ldots$.

If $\operator{A}$ is a self-adjoint operator with an infinite matrix
representation $\mat{A}_\infty$, then also $\mat{A}_\infty$ is a self-adjoint
operator on $\ell^2$, the space of absolutely square-summable sequences. 
The domain of $\mat{A}_\infty$ is
$
\vectorspace{D}(\mat{A}_\infty)=
  \left\{
  \mat{v} : \|\mat{A}_\infty\,\mat{v}\|<\infty
  \right\}
$
which is isomorphic with $\vectorspace{D}(\operator{A})$.

Vectors $\mat{e}_i$ have $1$ in component $i$ and 0 in other components.
We use the same notation for $\mat{e}_i$ in $\mathbb{R}^n$ or $\ell^2$.
Vector space  $\ell^2_0$ is the space of finite linear combinations of
$\mat{e}_i\in\ell^2$. Matrix $\mat{I}_n$ is an identity matrix in
$\mathbb{R}^{n+1}$ and $\mat{I}_\infty$ in $\ell^2$. Infininte matrix of zeros
is $\mat{0}_{\infty}$, and $\mat{0}_{n\times m}$ are $n\times m$ matrices
of zeros where $n$ or $m$ can be $\infty$.

\subsection{Matrices with nonnegative elements}
\label{sec:matrices-with-nonnegative-elements}
We define an elementwise partial order of matrices as
  $\mat{A}\ewleq\mat{B}$,
  that is, $\mat{A}\ewleq\mat{B}$ if $[\mat{A}]_{i,j}\leq [\mat{B}]_{i,j}$ for all
  $i,j$.
For functions of finite matrices with nonnegative elements, we introduce
two classes of special interest \cite{Hansen:1992,Hansen:2000}.
\begin{definition}
  \label{def:m-positive}
  Let $f:I\mapsto\mathbb{R}$ be a real function defined on an interval
  $I\subset\mathbb{R}$.
  \begin{enumerate}
  \item
    $f$ is $m$-positive if $0\in I$ and $f(\mat{A})\ewgeq \mat{0}$ for every
    symmetric $n\times n$ matrix $\mat{A}\ewgeq \mat{0}$ with spectrum contained
    in $I$, and every $n\in \mathbb{N}$.
  \item
    $f$ is $m$-monotone if
    $\mat{A}\ewleq \mat{B}\Rightarrow f(\mat{A})\ewleq f(\mat{B})$,
    for all symmetric $n\times n$ matrices that satisfy 
    $\mat{A},\mat{B}\ewgeq \mat{0}$
      with spectra in $I$, and every $n\in\mathbb{N}$.
  \end{enumerate}  
\end{definition}
Under favorable conditions, the concepts of $m$-positive and $m$-monotone
functions are equal as is
shown by the following theorem.
\begin{theorem}
  \label{thm:m-positive-equal-m-monotone}
  Let $f:I\mapsto\mathbb{R}$ be a real function defined on an interval
  $I\subset\mathbb{R}$ such that $0\in I$ and $f(0)\geq 0$. Then
  $f$ is $m$-positive if and only if it is $m$-monotone.
\end{theorem}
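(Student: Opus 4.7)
The plan is to tackle the two implications separately, since they have very different character. The forward direction, $m$-monotone $\Rightarrow$ $m$-positive, I would dispose of in a single line: for any symmetric $\mat{A}\ewgeq\mat{0}$ with $\sigma(\mat{A})\subset I$, the zero matrix is symmetric, nonnegative, has spectrum $\{0\}\subset I$, and satisfies $\mat{0}\ewleq\mat{A}$. Applying the $m$-monotone hypothesis gives $f(\mat{0})\ewleq f(\mat{A})$; since $f(\mat{0})=f(0)\,\mat{I}$ has diagonal entries $f(0)\geq 0$ and zero off-diagonal entries, $f(\mat{0})\ewgeq\mat{0}$, and transitivity of the elementwise order gives $f(\mat{A})\ewgeq\mat{0}$.

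The reverse direction, $m$-positive $\Rightarrow$ $m$-monotone, is where I expect the work to lie. My first attempt would be a block-matrix embedding: given $\mat{A}\ewleq\mat{B}$ both symmetric with nonnegative entries and spectra in $I$, I would look for a $2n\times 2n$ symmetric matrix
\[
\mat{C}=\begin{pmatrix}\mat{A}&\mat{X}\\ \mat{X}^\top&\mat{B}\end{pmatrix}\ewgeq\mat{0}
\]
with $\sigma(\mat{C})\subset I$, where $\mat{X}$ is engineered so that one of the blocks of $f(\mat{C})$ reproduces $f(\mat{B})-f(\mat{A})$ or, more modestly, has the same sign entrywise. The $m$-positive hypothesis would then give $f(\mat{C})\ewgeq\mat{0}$, and reading off that block would yield $f(\mat{A})\ewleq f(\mat{B})$.

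The main obstacle is that the joint requirements $\mat{C}\ewgeq\mat{0}$ and $\sigma(\mat{C})\subset I$ are rather restrictive in combination, and a coupling $\mat{X}$ linking $f(\mat{A})$ and $f(\mat{B})$ in the right way is not guaranteed to exist for an arbitrary $I$. My fallback route would be a reduction to polynomials with nonnegative coefficients: for $p(x)=\sum_k c_k x^k$ with $c_k\geq 0$, both properties are manifest, since $\mat{A}^k\ewgeq\mat{0}$ for every nonnegative symmetric $\mat{A}$ and the telescoping identity
\[
\mat{B}^k-\mat{A}^k=\sum_{j=0}^{k-1}\mat{B}^j\,(\mat{B}-\mat{A})\,\mat{A}^{k-1-j}
\]
is entrywise nonnegative whenever $\mat{A},\mat{B}\ewgeq\mat{0}$ and $\mat{B}\ewgeq\mat{A}$. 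It would then suffice to approximate an arbitrary $m$-positive $f$ with $f(0)\geq 0$ pointwise on the relevant spectra by such polynomials. Showing that $m$-positivity actually forces the Taylor or moment coefficients to be nonnegative is the algebraic heart of the equivalence, and is the step where I would lean on the techniques of Hansen from the references already cited just before the statement rather than reconstructing them from scratch.
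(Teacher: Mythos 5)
Your forward direction ($m$-monotone $\Rightarrow$ $m$-positive) is exactly the standard argument and is fine. The gap is in the reverse direction. Your block-matrix instinct is the right one, but you abandon it one step too early: the coupling you are looking for is not a matrix $\mat{X}$ inserted between diagonal blocks $\mat{A}$ and $\mat{B}$, but the orthogonal conjugation of the \emph{uncoupled} direct sum. Concretely, set
\begin{equation*}
\mat{X}=\begin{pmatrix}\mat{A}&\mat{0}\\ \mat{0}&\mat{B}\end{pmatrix},\qquad
\mat{U}=\frac{1}{\sqrt{2}}\begin{pmatrix}\mat{I}&-\mat{I}\\ \mat{I}&\mat{I}\end{pmatrix},\qquad
\mat{U}^\top\mat{X}\,\mat{U}=\frac{1}{2}\begin{pmatrix}\mat{A}+\mat{B}&\mat{B}-\mat{A}\\ \mat{B}-\mat{A}&\mat{A}+\mat{B}\end{pmatrix}.
\end{equation*}
If $\mat{A}\ewleq\mat{B}$ and $\mat{A},\mat{B}\ewgeq\mat{0}$, the conjugated matrix is elementwise nonnegative, and its spectrum equals $\sigma(\mat{A})\cup\sigma(\mat{B})\subset I$ because orthogonal conjugation preserves the spectrum. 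Applying $m$-positivity to $\mat{U}^\top\mat{X}\,\mat{U}$ and using $f(\mat{U}^\top\mat{X}\,\mat{U})=\mat{U}^\top f(\mat{X})\,\mat{U}$, whose off-diagonal block is $\tfrac{1}{2}\left(f(\mat{B})-f(\mat{A})\right)$, gives $f(\mat{A})\ewleq f(\mat{B})$. This is precisely the proof of Hansen that the paper cites here and reproduces, adapted to infinite matrices, in the proof of Theorem~\ref{thm:operator-m}.

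Your fallback route does not close the gap either. The reduction to polynomials with nonnegative coefficients requires knowing that every $m$-positive function on $I$ is a restriction of an absolutely monotone function; that characterization is recorded in the paper only for $I=[0,\infty)$ and $I=(-\infty,\infty)$, so the argument would not cover general intervals $I$ containing $0$. Moreover you explicitly defer what you call the algebraic heart of the matter to the literature rather than proving it, and the standard proof of that very fact runs through the conjugation identity above. As written, the reverse implication is therefore not established.
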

\begin{proof}
  See \cite[Theorem~2.1~(1)]{Hansen:1992}.
\end{proof}
On $[0,\infty)$ and $(-\infty,\infty)$ intervals, 
all $m$-positive and hence $m$-monotone functions are absolutely 
monotone functions, that is, functions that have a converging
power series expansion $f(x)=\sum_{n=0}^\infty c_n\,x^n$ for
$n=0,1,2,\ldots$ and $c_n\geq 0$ for all $n$ 
\cite{Micchelli+Willoughby:1979,Hansen:1992,Hansen:2000}.

\subsection{Jensen's operator inequality}
\label{sec:jensens-operator-inequality}
For bounded operators, we say that a self-adjoint operator
$\operator{A}$ is positive
if
$
  0 \leq \langle \phi, \operator{A}\,\phi\rangle
$
for all
$\phi$ in the Hilbert space. We mark
this as $\operator{A}\qfgeq 0$.
For bounded operators, we can use definition $\operator{A}\qfleq \operator{B}$
if $0 \qfleq \operator{B}-\operator{A}$ 
\cite{Kato:1995,Reed+Simon:1981,Bhatia:1997,Carlen:2010}.
For unbounded self-adjoint operators, we define
$\operator{A}\qfleq \operator{B}$ if
$\langle \operatorname{sgn}(\operator{A})\,\sqrt{\abs{\operator{A}}}\,\phi,
\sqrt{\abs{\operator{A}}}\,\phi\rangle\leq
\langle \operatorname{sgn}(\operator{B})\,\sqrt{\abs{\operator{B}}}\,\phi,
\sqrt{\abs{\operator{B}}}\,\phi\rangle$ for all
$\phi\in\vectorspace{D}(\sqrt{\abs{\operator{B}}})\subset \vectorspace{D}(\sqrt{\abs{\operator{A}}})$ where
$\operatorname{sgn}(\cdot)$ is the sign function.

  A real function $f$ defined on an interval $I$ is said to be
  operator convex for bounded operators
  if for all self-adjoint bounded operators
  $\operator{A},\operator{B}$ %
  with spectrum in $I$,
  for each $\lambda\in[0,1]$, we have
 $   f(\lambda\,\operator{A}+(1-\lambda)\,\operator{B})\qfleq \lambda\,
    f(\operator{A})+(1-\lambda)\,f(\operator{B})
$ \cite{Bhatia:1997,Hansen+Pedersen:2003,Carlen:2010}.
Operators $f(\operator{A})$ and $f(\operator{B})$
are bounded because operator convex functions are continuous.

The set of operator convex functions does not contain all convex functions.
For example, for bounded operators on interval
$I=(0,\infty)$, $f(x)=x^p$ is operator convex for
  $-1\leq p\leq 0$ and $1\leq p \leq 2$ but not for 
  $p<-1$, $0 < p < 1$ or $2<p$ 
\cite[Theorem~2.6]{Carlen:2010}.

For the proof of convergence of the matrix approximation for some integrals,
we can use the following version of Jensen's operator inequality
\cite[Theorem~2.1 (i) and (iv)]{Hansen+Pedersen:2003}.
  For an operator convex function $f$ defined on an interval
  $I$
  \begin{equation}
    \label{eq:jensen}
    \operator{P}\,
    f(\operator{P}\,\operator{A}\,\operator{P}+s\,(1-\operator{P}))\,
    \operator{P}\qfleq \operator{P}\,f(\operator{A})\,\operator{P}
  \end{equation}
  for every orthogonal projection operator $\operator{P}$ 
  and every bounded
  self-adjoint operator $\operator{A}$ defined 
  on an infinite-dimensional Hilbert space
  with spectrum
  in $I$ and every $s\in I$.

Another relevant concept is operator monotone functions 
\cite{Bhatia:1997,Carlen:2010}.
A real function $f$ defined on interval $I$ is operator monotone, if for
all self-adjoint operators $\operator{A},\operator{B}$ with sepectrum on
$I$, we have
$
  \operator{A}\qfleq \operator{B} 
  \Rightarrow f(\operator{A})\qfleq f(\operator{B}).
$
For example, $f(x)=-x^p$ is operator monotone for $-1\leq p \leq 0$ on interval
$x\in(0,\infty)$ \cite[Theorem~2.6]{Carlen:2010}. The definition of operator
monotone functions is the same for unbounded operators. Also, functions
that are operator monotone for bounded operators on interval $(0,\infty)$
are operator monotone
for unbounded positive self-adjoint operators \cite[Theorem~5]{Dinh+Tikhonov:2010}.

\section{Convergence}
\label{sec:convergence}
We establish convergence for a growing class of outside functions $f$. We start
by first proving the strong resolvent convergence that then immediately
covers the
bounded continuous outside functions in
Section~\ref{sec:proof-of-strong-resolvent-convergence}.
Our first extension to this basic result is the class of discontinuous
functions in Section~\ref{sec:convergence-for-discontinuous-functions}.

Then we extend the results to unbounded functions.
First, we discuss the topic of general unbounded outside functions in
Section~\ref{sec:strong-resolvent-convergence-for-unbounded-functions}
and notice that it is a too wide class of functions to prove convergence
in general. Then we find proof for a growing class of unbounded functions
as the inside function $g$ and basis functions $\phi_i$ are polynomials.
In Section~\ref{sec:convergence-for-quadratically-bounded-functions},
we prove convergence for quadratically bounded outside functions
without restrictions to the inside function and polynomially bounded
outside functions when the inside function and the basis functions are 
polynomials.
In Section~\ref{sec:nonnegative-matrix-coefficients},
we prove convergence for inside functions that have matrix
representations with nonnegative coefficients. The outside functions do not
even have to meet the criteria of being functions of operators that have
infinite matrix representation.

Finally, we consider convergence for outside functions that are singular on a
finite endpoint of an integration interval in
Section~\ref{sec:improper-integrals-on-finite-endpoints-of-an-interval}.

\subsection{Proof of the strong resolvent convergence}
\label{sec:proof-of-strong-resolvent-convergence}
It can be difficult to prove directly that a sequence of self-adjoint
operators converges in the strong resolvent sense. Therefore, we will
introduce a concept of core that we can use to give proof of strong
resolvent convergence.
\begin{definition}
A core $\vectorspace{D}_0$ of
an operator $\operator{A}$ is a subspace of the domain of the operator
such that
$
(\operator{A}\vert_{\vectorspace{D}_0})^{**}=\operator{A}.
$
\end{definition}
Usually, the core is defined in terms of the closure of the operator
\cite{Kato:1995,Reed+Simon:1981,Weidmann:1980,Simon:2015} but
we note that for densely defined and closable operators,
the closure is equivalent to the second adjoint
\cite[Chapter~III, Theorem~5.29]{Kato:1995}, 
\cite[Theorem~5.3~(b)]{Weidmann:1980},
\cite[Theorem~VIII.1~(b)]{Reed+Simon:1981}
or 
      \cite[Theorem~7.1.1~(c)]{Simon:2015}.
With a suitable core, the strong resolvent convergence can be proved by
the following theorem.
\begin{theorem}
  Let $\vectorspace{D}_0$ be a core of self-adjoint operators $\operator{A}_n$
  and $\operator{A}$ where $n = 0,1,,\ldots$. If
  for each $\phi\in\vectorspace{D}_0$,
  \begin{equation*}
  \lim_{n\rightarrow \infty}\|(\operator{A}-\operator{A}_n)\,\phi\|=0,
  \end{equation*}
  then $\operator{A}_n\xrightarrow{srs}\operator{A}$. 
\end{theorem}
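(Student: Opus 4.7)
The plan is to prove strong resolvent convergence by picking a nonreal $z$, exploiting the uniform resolvent bound $\|(\operator{A}_n - z)^{-1}\| \leq 1/|\mathrm{Im}\,z|$, and applying the second resolvent identity on a dense subset of the Hilbert space that comes from $\vectorspace{D}_0$.

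First I would fix $z$ with $\mathrm{Im}\,z \neq 0$. For any $\phi \in \vectorspace{D}_0 \subset \vectorspace{D}(\operator{A}) \cap \vectorspace{D}(\operator{A}_n)$, the second resolvent identity gives
\begin{equation*}
  [(\operator{A}_n - z)^{-1} - (\operator{A} - z)^{-1}](\operator{A} - z)\phi
  = (\operator{A}_n - z)^{-1}(\operator{A} - \operator{A}_n)\phi,
\end{equation*}
so by the uniform resolvent bound,
\begin{equation*}
  \bigl\|[(\operator{A}_n - z)^{-1} - (\operator{A} - z)^{-1}](\operator{A} - z)\phi\bigr\|
  \leq \frac{1}{\abs{\mathrm{Im}\,z}}\,\|(\operator{A} - \operator{A}_n)\phi\| \to 0.
\end{equation*}
This establishes convergence on the set $(\operator{A} - z)(\vectorspace{D}_0)$.

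The hard part — and the only place the core hypothesis is genuinely used — is to upgrade convergence from this set to the entire Hilbert space. For that, I would show $(\operator{A} - z)(\vectorspace{D}_0)$ is dense. Suppose $\psi$ is orthogonal to every vector $(\operator{A} - z)\phi$ with $\phi \in \vectorspace{D}_0$. Then $\langle \psi, \operator{A}\phi\rangle = \bar z\,\langle \psi, \phi\rangle$ for all $\phi \in \vectorspace{D}_0$, which places $\psi$ in the domain of $(\operator{A}|_{\vectorspace{D}_0})^*$ with $(\operator{A}|_{\vectorspace{D}_0})^*\psi = \bar z\,\psi$. Using the identity $(\operator{A}|_{\vectorspace{D}_0})^* = ((\operator{A}|_{\vectorspace{D}_0})^{**})^* = \operator{A}^* = \operator{A}$, which follows from the core property together with self-adjointness of $\operator{A}$, I conclude $\operator{A}\psi = \bar z\,\psi$. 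Since $\bar z$ is nonreal it cannot be an eigenvalue of the self-adjoint operator $\operator{A}$, so $\psi = 0$. Therefore $(\operator{A} - z)(\vectorspace{D}_0)$ is dense in the Hilbert space.

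Finally I would close the argument with a standard $\epsilon/3$ approximation. Given any $\psi$ in the Hilbert space and any $\epsilon > 0$, choose $\phi \in \vectorspace{D}_0$ such that $\|\psi - (\operator{A} - z)\phi\| < \epsilon\,|\mathrm{Im}\,z|/3$. Splitting
\begin{equation*}
  [(\operator{A}_n - z)^{-1} - (\operator{A} - z)^{-1}]\psi
  = [(\operator{A}_n - z)^{-1} - (\operator{A} - z)^{-1}](\psi - (\operator{A} - z)\phi)
  + [(\operator{A}_n - z)^{-1} - (\operator{A} - z)^{-1}](\operator{A} - z)\phi,
\end{equation*}
the first term is bounded by $2/|\mathrm{Im}\,z|$ times the approximation error, hence by $2\epsilon/3$, while the second term tends to zero by the calculation above. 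Thus $(\operator{A}_n - z)^{-1}\psi \to (\operator{A} - z)^{-1}\psi$ for every $\psi$, which by definition is $\operator{A}_n \xrightarrow{srs} \operator{A}$. The main obstacle is really the density step, since without the core property one only has strong convergence on what could be a very small subset; the remaining steps are routine given the uniform resolvent bound.
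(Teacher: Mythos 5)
Your proof is correct. The paper does not prove this theorem itself but simply cites \cite[Theorem~VIII.25~(a)]{Reed+Simon:1981}, and your argument --- the second resolvent identity combined with the uniform bound $\|(\operator{A}_n-z)^{-1}\|\leq 1/\abs{\mathrm{Im}\,z}$, the density of $(\operator{A}-z)\vectorspace{D}_0$ deduced from the core property via $(\operator{A}|_{\vectorspace{D}_0})^*=\operator{A}$, and the final $\epsilon/3$ extension --- is exactly that standard proof, correctly identifying the density step as the only place the core hypothesis enters (the lone quibble being that $\langle\psi,\operator{A}\phi\rangle=z\langle\psi,\phi\rangle$ rather than $\bar z\langle\psi,\phi\rangle$ under the paper's convention, which does not affect the conclusion $(\operator{A}|_{\vectorspace{D}_0})^*\psi=\bar z\,\psi$).
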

\begin{proof}
  See   \cite[Theorem~VIII.25~(a)]{Reed+Simon:1981} for proof, or
  \cite[Chapter~VIII, Corollary~1.6]{Kato:1995},
  \cite[Theorem~9.16~(i)]{Weidmann:1980}, or 
  \cite[Theorem~7.2.11]{Simon:2015} for proofs of similar but slightly more
  general theorems.
\end{proof}
After finding a suitable core, we can prove the strong resolvent convergence. The space of absolutely square-summable sequences or infinite-dimensional vectors is $\ell^2$. Its subspace $\ell^2_0$ is a space where vectors have only finitely many non-zero elements. It now turns out that $\ell^2_0$ is, in general, a core for any self-adjoint infinite matrix. 
Similar ideas related to infinite band matrices have been discussed in 
\cite{Barrios+Lopez+Martinez-Finkelshtein+Torrano:1999,Dombrowski:1990}.
A space which is isomorphic with $\ell^2_0$ has been used as a core
without a proof that it is a core in
\cite{Arai:1992}.
A space that contains a space that is isomorphic with $\ell^2_0$ has
also been used as a core in \cite{Hansen:2008}.
A proof that a vector space is a core for a certain operator
can sometimes be very specialized 
\cite[Lemma~4.4]{Rosler:2019}, but
we present a very general proof that $\ell^2_0$ is a core for self-adjoint
infinite matrices
or equivalently a space that is isomorphic with $\ell^2_0$ is a core
for a self-adjoint operator with infinite matrix representation.
\begin{theorem}
  \label{thm:core-srs}
  Let $\operator{A}$ be a self-adjoint operator with an infinite
  matrix representation $\mat{A}_\infty$.
  Then $\ell^2_0$ is a core for $\mat{A}_\infty$.
\end{theorem}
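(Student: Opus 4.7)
The plan is to prove the core property by computing the adjoint of the restriction. Write $S=\mat{A}_\infty|_{\ell^2_0}$. Since $\mat{A}_\infty$ is self-adjoint on $\ell^2$, the goal $S^{**}=\mat{A}_\infty$ reduces, by taking one further adjoint, to the adjoint identity $S^{*}=\mat{A}_\infty$. Two preliminaries are immediate: condition~\eqref{eq:infinite-matrix-condition} says that $\mat{A}_\infty\,\mat{e}_j$ (the $j$-th column of the matrix) lies in $\ell^2$ with norm $\|\operator{A}\,\phi_j\|$, so each standard basis vector $\mat{e}_j$ belongs to $\vectorspace{D}(\mat{A}_\infty)$ and hence $\ell^2_0\subset\vectorspace{D}(\mat{A}_\infty)$; moreover $\ell^2_0$ is dense in $\ell^2$, so $S$ is densely defined and its adjoint is well defined.

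Since $S\subset\mat{A}_\infty$, taking adjoints reverses the inclusion: $\mat{A}_\infty=\mat{A}_\infty^{*}\subset S^{*}$. The real content is the reverse $\vectorspace{D}(S^{*})\subset\vectorspace{D}(\mat{A}_\infty)$. Let $\mat{u}\in\vectorspace{D}(S^{*})$, so that the functional $\mat{v}\mapsto\langle\mat{u},\mat{A}_\infty\mat{v}\rangle$ is bounded on $\ell^2_0$ in the $\ell^2$-norm. For $\mat{v}\in\ell^2_0$ the inner summations are finite and may be exchanged, giving
\begin{equation*}
\langle\mat{u},\mat{A}_\infty\mat{v}\rangle=\sum_j \alpha_j\,v_j,\qquad \alpha_j:=\sum_i\bar{u}_i\,[\mat{A}_\infty]_{i,j},
\end{equation*}
where each $\alpha_j$ converges absolutely by Cauchy--Schwarz between $\mat{u}\in\ell^2$ and the $\ell^2$ column $([\mat{A}_\infty]_{i,j})_i$. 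Testing the bounded functional against the finite truncations $\mat{v}=\sum_{j=0}^N\overline{\alpha_j}\,\mat{e}_j\in\ell^2_0$ forces $(\alpha_j)\in\ell^2$. The conjugate-symmetry $\overline{[\mat{A}_\infty]_{i,j}}=[\mat{A}_\infty]_{j,i}$, which is an easy consequence of self-adjointness of $\mat{A}_\infty$ applied to pairs $\mat{e}_i,\mat{e}_j$, then identifies $\alpha_j=\overline{(\mat{A}_\infty\mat{u})_j}$. Therefore $\mat{A}_\infty\mat{u}\in\ell^2$ and $\mat{u}\in\vectorspace{D}(\mat{A}_\infty)$. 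This gives $S^{*}=\mat{A}_\infty$, and a further adjoint yields $S^{**}=\mat{A}_\infty^{*}=\mat{A}_\infty$, the required core identity.

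The main delicacy is this adjoint-domain inclusion: the interchange of the two sums in the definition of $\alpha_j$ and the identification $\alpha_j=\overline{(\mat{A}_\infty\mat{u})_j}$ both rely on the $\ell^2$-square-summability of columns granted by~\eqref{eq:infinite-matrix-condition}. Deliberately testing only against vectors in $\ell^2_0$ (basis vectors $\mat{e}_j$ and their truncated linear combinations) keeps every manipulation at the level of finite sums and single-column $\ell^2$-bounds, so no deeper analytic input than \eqref{eq:infinite-matrix-condition} and the self-adjointness of $\mat{A}_\infty$ is needed.
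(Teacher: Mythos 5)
Your proposal is correct and follows essentially the same route as the paper: both reduce the core property to the adjoint identity $(\mat{A}_\infty|_{\ell^2_0})^{*}=\mat{A}_\infty$, obtain one inclusion from $S\subset\mat{A}_\infty$ by adjoint reversal, and prove the nontrivial inclusion $\vectorspace{D}(S^{*})\subset\vectorspace{D}(\mat{A}_\infty)$ by pairing with the basis vectors $\mat{e}_j$ and using the conjugate symmetry of the matrix elements together with the column square-summability from \eqref{eq:infinite-matrix-condition}. The only cosmetic difference is that where you test the bounded functional against truncations of $(\overline{\alpha_j})$ to get $(\alpha_j)\in\ell^2$, the paper identifies $\langle\mat{e}_k,S^{*}\mat{u}\rangle=\langle\mat{e}_k,\mat{A}_\infty\mat{u}\rangle$ and concludes directly by Parseval from $\|S^{*}\mat{u}\|<\infty$.
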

\begin{proof}
  Our proof is an adaptation of proof of \cite[Theorem~6.20]{Weidmann:1980}.
  We use the following property of the adjoint:
  \begin{itemize}
    \item Let $\operator{A}$ and $\operator{B}$ be densely defined operators,
      then 
      $\vectorspace{D}(\operator{A})\subset\vectorspace{D}(\operator{B})$
      and $\operator{A}=\operator{B}\vert_{\vectorspace{D}(\operator{A})}$ 
      $\Rightarrow$ 
      $\vectorspace{D}(\operator{B}^*)\subset\vectorspace{D}(\operator{A}^*)$
      and $\operator{B}^*=\operator{A}^*\vert_{\vectorspace{D}(\operator{B}^*)}$,
      that is,
      \begin{equation}
        \label{eq:operator-subset}
        \operator{A}\subset\operator{B}
        \Rightarrow
        \operator{B}^*\subset\operator{A}^*.
      \end{equation}
      (See \cite[p.~72]{Weidmann:1980} or \cite[p.~252]{Reed+Simon:1981}.)
  \end{itemize}
  Let operator $\operator{A}_0=\mat{A}_\infty\vert_{\ell^2_0}$.
  The natural basis vectors of $\ell^2$ are vectors $\mat{e}_i$.
  Vectors of $\ell^2_0$ are finite linear combinations of basis vectors
  $\mat{e}_i$.
  Because all basis vectors $\mat{e}_i\in\vectorspace{D}(\mat{A}_\infty)$,
  we also have $\ell^2_0\subset\vectorspace{D}(\mat{A}_\infty)$.
  Therefore, we have $\operator{A}_0\subset\mat{A}_\infty$.
  By \eqref{eq:operator-subset}, we have 
  $\mat{A}_\infty^*\subset \operator{A}_0^*$. Because $\mat{A}_\infty$
  is self-adjoint, we have 
  $\mat{A}_\infty=\mat{A}_\infty^*\subset\operator{A}_0^*$.
  We show that also
  $\operator{A}_0^*\subset\mat{A}_\infty$, that is,
  $
    \vectorspace{D}(\operator{A}_0^*)
    \subset
    \vectorspace{D}(\mat{A}_\infty)
    =
    \{\mat{u} : \|\mat{A}_\infty\,\mat{u}\|^2<\infty\}
    $
  and
  $\operator{A}_0^*=\mat{A}_\infty\vert_{\vectorspace{D}(\operator{A}_0^*)}$. 
  Let $\mat{u}\in\vectorspace{D}(\operator{A}_0^*)$ and thus
  $\|\operator{A}_0^*\,\mat{u}\|^2<\infty$. 
  Because $\mat{e}_k\in\vectorspace{D}(\operator{A}_0)$, we have for all
  $k=0,1,\ldots$
  \begin{equation}
    \label{eq:el-square-zero-dual}
    \langle \mat{e}_k,\operator{A}_0^*\,\mat{u}\rangle
    =
    \langle \operator{A}_0\,\mat{e}_k,\mat{u}\rangle
    =\sum_{j=0}^\infty \overline{a_{j,k}}\,u_j
    =\sum_{j=0}^\infty a_{k,j}\,u_j
    =\langle \mat{e}_k,\mat{A}_\infty\,\mat{u} \rangle.
  \end{equation}
  We square and sum over $k$
  \begin{equation*}
    \|\mat{A}_\infty\,\mat{u}\|^2
    =
    \sum_{k=0}^\infty\abs{\langle \mat{e}_k,\mat{A}_\infty\,\mat{u}\rangle}^2
    =
    \sum_{k=0}^\infty\abs{\langle \mat{e}_k,\operator{A}_0^*\,\mat{u}\rangle}^2
    =
    \|\operator{A}_0^*\,\mat{u}\|^2<\infty,
  \end{equation*}
  that is,
  $\mat{u}\in\vectorspace{D}(\operator{A}_0^*)\Rightarrow
  \mat{u}\in\vectorspace{D}(\mat{A}_\infty)$.
  By \eqref{eq:el-square-zero-dual} we also have for any 
  $\mat{u}\in\vectorspace{D}(\operator{A}_0^*)$
  \begin{equation*}
    \operator{A}_0^*\,\mat{u}=
    \sum_{k=0}^\infty
    \langle \mat{e}_k,\operator{A}_0^*\,\mat{u}\rangle\,
    \mat{e}_k=
    \sum_{k=0}^\infty
    \langle \mat{e}_k,\mat{A}_\infty\,\mat{u}\rangle\,
    \mat{e}_k=\mat{A}_\infty\,\mat{u},
  \end{equation*}
  that is, 
  $\operator{A}_0^*=\mat{A}_\infty\vert_{\vectorspace{D}(\operator{A}_0^*)}$.
  Because we have $\mat{A}_\infty\subset\operator{A}_0^*$ and 
  $\operator{A}_0^*\subset\mat{A}_\infty$, we have 
  $\operator{A}_0^*=\mat{A}_\infty$. Therefore, we also have
  $\operator{A}_0^{**}=\mat{A}_\infty^*=\mat{A}_\infty$.
\end{proof}

After discovering the core, it is straightforward to prove the 
strong resolvent convergence.
\begin{theorem}
  \label{thm:srs-convergence}
  Let $\operator{A}$ be a self-adjoint operator with an infinite matrix
  representation
$\mat{A}_\infty$. Let finite matrices $\mat{A}_n$ be the leading
principal $(n+1)\times (n+1)$ 
submatrices of $\mat{A}_\infty$. Then
for any $\mat{v}\in\ell^2_0$,
\begin{equation}
  \label{eq:srs}
  \mat{\tilde{A}}_n=
  \left[
    \begin{array}{cc}
    \mat{A}_n                & \mat{0}_{(n+1)\times\infty} \\
    \mat{0}_{\infty\times (n+1)} & \mat{0}_{\infty} 
    \end{array}
    \right]
  \xrightarrow{srs} \mat{A}_\infty,
\end{equation}
because $\mat{\tilde{A}}_n\,\mat{v}\rightarrow\mat{A}_\infty\,\mat{v}$
as $n\rightarrow\infty$.
\end{theorem}
\begin{proof}
  We define an orthonormal projection
  \begin{equation}
    \label{eq:projection}
    \mat{P}_n=    \left[
      \begin{array}{cc}
        \mat{I}_n                & \mat{0}_{(n+1)\times\infty} \\
        \mat{0}_{\infty\times (n+1)} & \mat{0}_\infty 
      \end{array}
      \right].
  \end{equation}
  With this projection, we can write
  $\mat{\tilde{A}}_n=\mat{P}_n\,\mat{A}_\infty\,\mat{P}_n$.
  We take an arbitrary vector in the core $\mat{v}\in\ell^2_0$. Let $m$ be the
  last index where components of $\mat{v}$ are non-zero, that is,
  $v_i=0$ for all $i>m$. For all $n>m$, we have
  $\mat{P}_n\,\mat{v}=\mat{v}$, and thus
  $(\mat{\tilde{A}}_n-\mat{A}_\infty)\,\mat{v}=(\mat{P}_n-\mat{I}_\infty)\,\mat{A}_\infty\,\mat{v}$.
  Vector $\mat{u}=\mat{A}_\infty\,\mat{v}\in\ell^2$, and therefore we have
  $(\mat{P}_n-\mat{I}_\infty)\,\mat{u}\rightarrow\mat{0}$ as
  $n\rightarrow\infty$. The strong resolvent convergence in \eqref{eq:srs}
  follows from Theorem~\ref{thm:core-srs}.
\end{proof}

From this, we already have convergence for bounded and continuous functions.
\begin{theorem}
  Let orthonormal functions $\phi_0=1,\phi_1,\phi_2,\ldots$ be dense in
  $\vectorspace{L}^2_w(\Omega)$. Also, let function
  $g$ satisfy \eqref{eq:mult-mat} and matrices $\mat{M}_n[g]$ have
  elements as in \eqref{eq:matrix-elements}.
  Then for bounded and continuous $f$
\begin{equation}
  \label{eq:bounded-convergence}
  \left[
    \begin{array}{cc}
      f\left(
      \mat{M}_n[g]
      \right)
                              & \mat{0}_{(n+1)\times\infty} \\
      \mat{0}_{\infty\times (n+1)} & \mat{0}_\infty
    \end{array}
    \right]
  =
  f\left(
  \left[
    \begin{array}{cc}
      \mat{M}_n[g]             & \mat{0}_{(n+1)\times\infty} \\
      \mat{0}_{\infty\times (n+1)} & \mat{0}_\infty 
    \end{array}
    \right]
  \right)
  \xrightarrow{s}
  f(\mat{M}_\infty[g]),
\end{equation}
and, as $n\rightarrow\infty$,
\begin{equation*}
\mat{e}_0^\top\,f(\mat{M}_n[g])\,\mat{e}_0
\rightarrow
\mat{e}_0^\top\,f(\mat{M}_\infty[g])\,\mat{e}_0
=
\langle 1,f(\operator{M}[g]\,1\rangle
=
\int_\Omega f(g(\mat{x}))\,w(\mat{x})\,d\mat{x}.
\end{equation*}
\end{theorem}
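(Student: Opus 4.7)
The plan is to chain together the two preceding theorems of the section. The basis vectors $\phi_0,\phi_1,\ldots$ are dense in $\vectorspace{L}^2_w(\Omega)$, and condition \eqref{eq:mult-mat} is exactly \eqref{eq:infinite-matrix-condition} for $\operator{M}[g]$, so $\operator{M}[g]$ admits an infinite matrix representation $\mat{M}_\infty[g]$ with respect to this basis. Since $g$ is real and the maximal multiplication operator by a measurable real function is self-adjoint, the corresponding infinite matrix $\mat{M}_\infty[g]$ is a self-adjoint operator on $\ell^2$ unitarily equivalent to $\operator{M}[g]$ via the map $\phi_i\mapsto \mat{e}_i$.

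First I would apply Theorem~\ref{thm:srs-convergence} to the self-adjoint operator $\mat{M}_\infty[g]$. This gives immediately that the padded matrices
\begin{equation*}
\widetilde{\mat{M}}_n[g]
=
\left[
\begin{array}{ccc}
\mat{M}_n[g] & 0 & \ldots\\
0 & 0 & \ldots\\
\vdots & \vdots & \ddots
\end{array}
\right]
\xrightarrow{srs}\mat{M}_\infty[g],
\end{equation*}
since $\mat{M}_n[g]$ are exactly the leading principal $(n+1)\times(n+1)$ submatrices of $\mat{M}_\infty[g]$ by \eqref{eq:matrix-elements}. Next, because $f$ is bounded and continuous on $\mathbb{R}$, Theorem~\ref{thm:bounded-function-srs} upgrades this strong resolvent convergence to strong convergence of the functional calculi, giving \eqref{eq:bounded-convergence}.

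For the scalar statement, strong convergence applied to $\mat{e}_0\in\ell^2_0$ yields $f(\widetilde{\mat{M}}_n[g])\,\mat{e}_0\to f(\mat{M}_\infty[g])\,\mat{e}_0$, so taking the inner product with $\mat{e}_0$ gives $\mat{e}_0^\top f(\mat{M}_n[g])\,\mat{e}_0\to\mat{e}_0^\top f(\mat{M}_\infty[g])\,\mat{e}_0$. Note that on the left side the zero-padding is harmless since $[f(\widetilde{\mat{M}}_n[g])]_{0,0}=[f(\mat{M}_n[g])]_{0,0}$ by the block-diagonal structure and the fact that continuous $f$ on block diagonal matrices acts blockwise (and in particular $f$ of the lower-right zero block is a scalar multiple of identity, contributing nothing off-diagonal to the $(0,0)$ entry).

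Finally, under the unitary isomorphism $\phi_i\leftrightarrow\mat{e}_i$, the vector $\mat{e}_0$ corresponds to $\phi_0=1$, so $\mat{e}_0^\top f(\mat{M}_\infty[g])\,\mat{e}_0=\langle 1,f(\operator{M}[g])\,1\rangle$. The last equality follows from the spectral representation \eqref{eq:spectral-rep}: since $\operator{M}[g]$ is the multiplication operator by $g$, one has $f(\operator{M}[g])=\operator{M}[f\circ g]$, and therefore
\begin{equation*}
\langle 1,f(\operator{M}[g])\,1\rangle
=\int_\Omega f(g(\mat{x}))\,w(\mat{x})\,d\mat{x}.
\end{equation*}
The main obstacle is really just bookkeeping: verifying that the zero-padding does not affect the $(0,0)$-entry of $f$ of the matrix, and keeping the unitary equivalence between $\operator{M}[g]$ on $\vectorspace{L}^2_w(\Omega)$ and $\mat{M}_\infty[g]$ on $\ell^2$ explicit enough so that the functional calculi agree. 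Neither difficulty is substantial once the two cited theorems are in hand.
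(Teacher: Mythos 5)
Your proposal is correct and follows essentially the same route as the paper: apply Theorem~\ref{thm:srs-convergence} to get strong resolvent convergence of the zero-padded truncations, upgrade to strong convergence of $f$ via Theorem~\ref{thm:bounded-function-srs}, and extract the $(0,0)$ quadratic form as a consequence of weak convergence. The extra bookkeeping you supply (blockwise action of $f$ on the padded matrix and the unitary identification $\phi_i\leftrightarrow\mat{e}_i$) is accurate and merely makes explicit what the paper leaves implicit.
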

\begin{proof}
  By Theorem~\ref{thm:srs-convergence}, we have
  $\mat{P}_n\,\mat{M}_\infty[g]\,\mat{P}_n\xrightarrow{srs}\mat{M}_\infty[g]$
  where $\mat{P}_n$ is as in \eqref{eq:projection}.
  The convergence of \eqref{eq:bounded-convergence} follows from
  Theorem~\ref{thm:bounded-function-srs}.
  The convergence of the quadratic form
  $\mat{e}_o^\top\,f(\mat{M}_n[g])\,\mat{e}_0\rightarrow
  \mat{e}_0^\top\,f(\mat{M}_\infty[g])\,\mat{e}_0$ is a special case of
  the weak convergence that follows from the strong convergence
  \eqref{eq:bounded-convergence}.
\end{proof}

This result already extends the previous result
\cite[Theorem~3]{Sarmavuori+Sarkka:2019}
for inside functions
from bounded functions $g$ to unbounded functions $g$.
However, our aim is to further extend the results to discontinuous and unbounded outside functions $f$ as well.

\subsection{Convergence for discontinuous functions}
\label{sec:convergence-for-discontinuous-functions}
First, we consider convergence when the integrand function $f$ in 
\eqref{eq:numint-def} is bounded but discontinuous. The largest space of 
discontinuous functions that are possible to integrate numerically are the
Riemann--Stieltjes integrable functions. 
It does not seem to be
possible to obtain convergence results for a larger set of functions
like, for example,
Lebesgue--Stieltjes integrable functions 
\cite[Chapter~1.8]{Davis+Rabinowitz:1984}. Similar arguments rule out
convergence for Borel or Baire measurable functions that are not
Riemann--Stieltjes
integrable.
When the starting point is a Borel measure, the largest set of
discontinuous functions that the convergence theorem covers seem to be
 functions
that are discontinuous on a set that is contained in a closed set which has
zero measure with respect to the spectral family of the operator
\cite[Theorem~2.6]{Bade:1954}, \cite{Foguel:1958}, 
\cite[Theorem~3]{Simpson:1966}
or
\cite[Theorem~3]{Sarmavuori+Sarkka:2019}. 
These convergence theorems do not then
cover, for example,
Thomae's function \cite[Example~7.1.6]{Bartle+Sherbert:2011}
which is discontinuous on rational numbers but is
Riemann--Stieltjes integrable.
The definition of the Riemann-- and the related Darboux--Stieltjes integrals
are
 given in Appendix~\ref{app:integrable-functions}. 

The spectral integral is usually defined as a Borel or a Baire measure.
One example of spectral integral as a Riemann--Stieltjes integral
can be found in \cite[Chapter~4.1]{Schmudgen:2012}. However,
the Riemann--Stieltjes integral is defined there in norm operator topology,
and we will rather make the definition in strong operator topology because 
the finite matrix approximations
can converge in norm only if the operator is compact
\cite[Problem~175]{Halmos:1982}, 
\cite[Theorem~1.2]{Morrison:1995}
and
the function space defined in the norm operator topology can only be a subset
of the space defined in the strong operator topology.
We define the spectral integral as Darboux--Stieltjes integral
in the same manner as it is defined as Lebesgue--Stieltjes integral
in \cite[Section~126]{Riesz+Nagy:1955}.
The Riemann--Stieltjes integral is then the same as the Darboux--Stieltjes
integral except limited to functions that are continuous at the discontinuities
of the spectral family.
We can define a non-decreasing function $\rho(t)$ through the spectral family
as $\rho(t)=\|\operator{E}(t)\,\phi\|^2$ for any $\phi$ in the Hilbert space.
Because operators $\operator{E}(t)$ are projections,
$\|\operator{E}(t)\,\phi\|^2=\langle \phi,\operator{E}(t)\,\phi\rangle$. 
Thus, we can define
\begin{equation*}
  \langle \phi, f(\operator{A})\,\phi\rangle=
  \int_{\sigma(\operator{A})}f(t)\,d\langle \phi,\operator{E}(t)\,\phi\rangle
  =
  \int_{\sigma(\operator{A})}f(t)\,d\|\operator{E}(t)\,\phi\|^2
\end{equation*}
as a Darboux--Stieltjes integral. We say that a function $f$ is
Darboux--Stieltjes
integrable with respect to $\|\operator{E}(t)\,\phi\|^2$ if the
Darboux--Stieltjes integral exists.

We can use polarization identity \cite[Section~126]{Riesz+Nagy:1955}
\begin{equation}
  \label{eq:polarization}
  \langle \phi, \operator{E}(t)\,\psi\rangle
  =
  \left\|
  \operator{E}(t)\,\frac{\scriptstyle\phi+\psi}{\scriptstyle 2}
  \right\|^2
  -
  \left\|
  \operator{E}(t)\,\frac{\scriptstyle\phi-\psi}{\scriptstyle 2}
  \right\|^2
  +i\,
  \left\|
  \operator{E}(t)\,\frac{\scriptstyle \phi+i\,\psi}{\scriptstyle 2}
  \right\|^2
  -i\,
  \left\|
  \operator{E}(t)\,\frac{\scriptstyle\phi-i\,\psi}{\scriptstyle 2}
  \right\|^2
\end{equation}
to define Darboux--Stieltjes integral with respect to
$\rho(t)=\langle \phi,\operator{E}(t)\,\psi\rangle$ as a linear
combination of four Darboux--Stieltjes integrals. Finally, if $f$ is
Darboux--Stieltjes integrable with respect to $\|\operator{E}(t)\,\phi\|^2$
for all $\phi$ in the Hilbert space, then we say that $f$ is Darboux--Stieltjes
integrable with respect to $\operator{E}(t)$.

We aim to extend the convergence of Theorem~\ref{thm:bounded-function-srs}
for continuous functions to Darboux-- or Riemann--Stieltjes
integrable functions.
We start by proving the equivalent of Theorem~\ref{thm:bounded-function-srs}
for step functions.
\begin{lemma}
  \label{lemma:step-function-conv}
  Let $\operator{A}_n$ and $\operator{A}$ be self-adjoint operators with
  spectral families $\operator{E}_n(t)$ and $\operator{E}(t)$, respectively, for $n=0,1,2,\ldots$.
  If $\operator{A}_n\xrightarrow{srs}\operator{A}$, then
  for a step function 
  $f(\operator{A}_n)\xrightarrow{s}f(\operator{A})$ provided that
  discontinuity points of $f$ are not eigenvalues of $\operator{A}$.
\end{lemma}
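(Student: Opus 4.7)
The plan is to reduce the lemma to the case of indicator functions of closed intervals, already settled by Theorem~\ref{thm:characteristic-convergence}, and then combine finitely many of them using linearity of strong convergence. The only delicate point is that a step function may take arbitrary values at its finitely many discontinuities, and these values interact nontrivially with $f(\operator{A}_n)$ whenever a discontinuity of $f$ happens to coincide with an eigenvalue of some $\operator{A}_n$ (which is permitted, since by hypothesis only $\operator{A}$ has no eigenvalue there); a short sandwich argument will resolve this.

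Let $t_1 < t_2 < \dots < t_m$ be the discontinuities of $f$, and let $c_k$ be the constant value taken by $f$ on the open interval $(t_k, t_{k+1})$, with the boundary conventions $t_0 := -\infty$ and $t_{m+1} := +\infty$. Introduce the auxiliary step function
\[
  \tilde f(t) \;=\; \sum_{k=0}^{m} c_k\,\chi_{(t_k,\, t_{k+1}]}(t),
\]
which agrees with $f$ off the finite set $\{t_1,\dots,t_m\}$. Denoting the spectral families of $\operator{A}_n$ and $\operator{A}$ by $\operator{E}_n(t)$ and $\operator{E}(t)$, the spectral calculus expresses $\tilde f(\operator{A}_n) = \sum_{k=0}^{m} c_k\,[\operator{E}_n(t_{k+1}) - \operator{E}_n(t_k)]$ and gives the analogous formula for $\tilde f(\operator{A})$, under the boundary conventions $\operator{E}_n(-\infty) = 0$ and $\operator{E}_n(+\infty) = \operator{I}$. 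Since each $t_k$ avoids the eigenvalues of $\operator{A}$, Theorem~\ref{thm:characteristic-convergence}~(1) yields $\operator{E}_n(t_k) \xrightarrow{s} \operator{E}(t_k)$ for every $k$, and the finite linear combination inherits strong convergence: $\tilde f(\operator{A}_n) \xrightarrow{s} \tilde f(\operator{A})$.

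To pass from $\tilde f$ to $f$, observe that $f - \tilde f$ is supported on $\{t_1,\dots,t_m\}$, so by the spectral theorem
\[
  (f-\tilde f)(\operator{B}) \;=\; \sum_{k=1}^{m}\bigl[f(t_k) - \tilde f(t_k)\bigr]\,\chi_{\{t_k\}}(\operator{B})
\]
for any self-adjoint $\operator{B}$. For $\operator{B}=\operator{A}$ this vanishes because $\operator{E}$ is continuous at each $t_k$, whence $f(\operator{A}) = \tilde f(\operator{A})$. For $\operator{B} = \operator{A}_n$ it suffices to show $\chi_{\{t_k\}}(\operator{A}_n) \xrightarrow{s} 0$ for each $k$. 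Given $\phi$ and $\delta > 0$, I would choose $\epsilon > 0$ such that $t_k \pm \epsilon$ both lie outside the countable set $\sigma_p(\operator{A})$ and $\|\chi_{[t_k-\epsilon,\,t_k+\epsilon]}(\operator{A})\,\phi\| < \delta/2$; this is possible since $\operator{E}$ is continuous at $t_k$. Because $\chi_{\{t_k\}}(\operator{A}_n) \leq \chi_{[t_k-\epsilon,\,t_k+\epsilon]}(\operator{A}_n)$ as projections, Theorem~\ref{thm:characteristic-convergence}~(2) yields $\|\chi_{\{t_k\}}(\operator{A}_n)\,\phi\| \leq \|\chi_{[t_k-\epsilon,\,t_k+\epsilon]}(\operator{A}_n)\,\phi\| < \delta$ for all sufficiently large $n$, hence $(f-\tilde f)(\operator{A}_n) \xrightarrow{s} 0$. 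Combining this with $\tilde f(\operator{A}_n) \xrightarrow{s} \tilde f(\operator{A}) = f(\operator{A})$ gives the conclusion $f(\operator{A}_n) \xrightarrow{s} f(\operator{A})$. The main obstacle is precisely this last sandwich step; the rest is routine bookkeeping with the spectral calculus.
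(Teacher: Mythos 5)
Your proof is correct and follows essentially the same route as the paper's: decompose the step function into characteristic functions whose relevant endpoints avoid $\sigma_p(\operator{A})$, apply Theorem~\ref{thm:characteristic-convergence}, and use linearity of strong limits; you simply spell out the bookkeeping at the jump points that the paper leaves implicit. As a minor simplification, the sandwich step for $\chi_{\{t_k\}}(\operator{A}_n)\xrightarrow{s}0$ is not needed, since Theorem~\ref{thm:characteristic-convergence}~(2) applied to the degenerate interval $[t_k,t_k]$ already gives $\chi_{\{t_k\}}(\operator{A}_n)\xrightarrow{s}\chi_{\{t_k\}}(\operator{A})=0$ directly.
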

\begin{proof}
  A step function can be represented as a finite linear combination of
  characteristic functions. By Theorem~\ref{thm:characteristic-convergence},
  the convergence is strong for each characteristic function. A finite
  linear combination of operators that converge strongly also converges
  strongly \cite[Chapter~4.9, Problem~2]{Kreyszig:1989}.
\end{proof}
 
Next, we extend convergence from step functions to 
Darboux--Stieltjes integrable functions.
\begin{theorem}
  \label{thm:Darboux-convergence}
  Let $\operator{A}_n$ and $\operator{A}$ be self-adjoint operators with
  spectral families $\operator{E}_n(t)$ and $\operator{E}(t)$, respectively, for $n=0,1,2,\ldots$.
  Let function $f$ be continuous on eigenvalues of $\operator{A}$.
  Let $\operator{A}_n\xrightarrow{srs}\operator{A}$.
  Then the following statements hold:
  \begin{enumerate}
  \item If $f(t)$ is Darboux--Stieltjes integrable with respect to
    $\rho(t)=\|\operator{E}(t)\,\phi\|^2$, then
    \begin{equation*}
      \lim_{n\rightarrow \infty}\langle \phi, f(\operator{A}_n)\,\phi\rangle
      =\langle \phi, f(\operator{A})\,\phi\rangle.
    \end{equation*}
  \item If $f(t)$ is Darboux--Stieltjes integrable with respect to
    $\rho(t)=\langle \psi,\operator{E}(t)\,\phi\rangle$, then
    \begin{equation*}
      \lim_{n\rightarrow \infty}\langle \psi, f(\operator{A}_n)\,\phi\rangle
      =\langle \psi, f(\operator{A})\,\phi\rangle.
    \end{equation*}
  \item If $f(t)$ is Darboux--Stieltjes integrable with respect to
    $\operator{E}(t)$, then
    $
      f(\operator{A}_n)\xrightarrow{s}f(\operator{A}).
$
  \end{enumerate}
\end{theorem}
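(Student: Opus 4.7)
The plan is to handle part 1 by sandwiching $f$ between step functions whose strong-operator convergence is supplied by Lemma~\ref{lemma:step-function-conv}, and then to reduce parts 2 and 3 to part 1 via polarization and an expansion of the squared-norm difference, respectively. For part 1, fix $\epsilon>0$ and use Darboux--Stieltjes integrability of $f$ with respect to $\rho(t)=\|\operator{E}(t)\,\phi\|^2$ to choose step functions $s_-, s_+$ with $s_-\leq f\leq s_+$ and $\int (s_+-s_-)\,d\rho<\epsilon$. Because $f$ is continuous at every eigenvalue of $\operator{A}$ and $\sigma_p(\operator{A})$ is at most countable on the separable Hilbert space under consideration, the finitely many partition points of $s_\pm$ can be perturbed off $\sigma_p(\operator{A})$ without disturbing the bracketing or inflating the integral gap. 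Lemma~\ref{lemma:step-function-conv} then yields $\langle \phi, s_\pm(\operator{A}_n)\,\phi\rangle\rightarrow \int s_\pm\,d\rho$ as $n\rightarrow\infty$. The functional-calculus inequality $s_-(\operator{A}_n)\qfleq f(\operator{A}_n)\qfleq s_+(\operator{A}_n)$, taken in quadratic form and combined with the same bound for $\operator{A}$, squeezes both the $\liminf$ and $\limsup$ of $\langle \phi, f(\operator{A}_n)\,\phi\rangle$ into an $\epsilon$-window around $\langle \phi, f(\operator{A})\,\phi\rangle$, and arbitrariness of $\epsilon$ delivers the claim.

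For part 2, apply the polarization identity to write $\langle \psi, f(\operator{A}_n)\,\phi\rangle$ as a linear combination of four quadratic forms $\langle \eta_k, f(\operator{A}_n)\,\eta_k\rangle$ where $\eta_k$ ranges over $(\phi\pm\psi)/2$ and $(\phi\pm i\,\psi)/2$; the integrability hypothesis for part 2 is precisely integrability with respect to each of the positive measures $\|\operator{E}(t)\,\eta_k\|^2$ arising from polarizing $\langle \psi, \operator{E}(t)\,\phi\rangle$, so part 1 applies termwise. For part 3, set $\psi=f(\operator{A})\,\phi$ and expand
\begin{equation*}
  \|(f(\operator{A}_n)-f(\operator{A}))\,\phi\|^2
  =
  \langle \phi, f^2(\operator{A}_n)\,\phi\rangle
  - 2\,\mathrm{Re}\,\langle \psi, f(\operator{A}_n)\,\phi\rangle
  + \|f(\operator{A})\,\phi\|^2.
\end{equation*}
Since Darboux--Stieltjes integrability of $f$ against every $\|\operator{E}(\cdot)\,\eta\|^2$ forces $f$ to be bounded and continuous almost everywhere with respect to each such measure, the same holds for $f^2$; part 1 applied to $f^2$ sends the first term to $\|f(\operator{A})\,\phi\|^2$, and part 2 sends the middle term to $2\,\|f(\operator{A})\,\phi\|^2$, so the right-hand side tends to zero.

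The main obstacle is ensuring that the bracketing inequality $s_-(\operator{A}_n)\qfleq f(\operator{A}_n)\qfleq s_+(\operator{A}_n)$ holds for \emph{every} $n$, since the spectra $\sigma(\operator{A}_n)$ may protrude beyond whichever bounded interval one used to build a Darboux partition for $\rho$. The remedy is to construct $s_-, s_+$ on all of $\mathbb{R}$ with $s_-\leq f\leq s_+$ globally; this is feasible because Darboux--Stieltjes integrability against a finite measure forces $f$ to be bounded on the support of $\rho$, so constant extrapolation outside a sufficiently large interval preserves the bracket and inflates $\int(s_+-s_-)\,d\rho$ by at most the $\rho$-mass of the tail, which is small since $\rho$ has finite total mass $\|\phi\|^2$. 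A secondary care-point is avoiding eigenvalues of $\operator{A}$ when placing partition points, which is possible because $f$ is continuous at $\sigma_p(\operator{A})$ and the latter is countable.
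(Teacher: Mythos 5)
Your proof is correct and follows essentially the same route as the paper's: part 1 by sandwiching $f$ between step functions (chosen with partition points off the eigenvalues so that Lemma~\ref{lemma:step-function-conv} applies) and squeezing the $\liminf$ and $\limsup$ of the quadratic forms, part 2 by polarization, and part 3 by expanding $\|(f(\operator{A}_n)-f(\operator{A}))\,\phi\|^2$ and invoking parts 1 and 2 together with the Darboux--Stieltjes integrability of $f^2$. Your additional care about extending the step functions to all of $\mathbb{R}$ so the bracket holds on $\sigma(\operator{A}_n)$ is a reasonable tightening of a point the paper leaves implicit, but it does not change the argument.
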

\begin{proof}
  For the proof of statement 1, 
    we take the idea from \cite[Chapter~2.7.8]{Davis+Rabinowitz:1984}.
    We assume that $f$ is Darboux--Stieltjes integrable with respect to
    $\rho(t)=\|\operator{E}(t)\,\phi\|^2$.
    We define step functions $f_l$ and $f_u$ as in
    Lemma~\ref{lemma:Darboux--Stieltjes}. By Lemma~\ref{lemma:Darboux--Stieltjes},
    we can also select $f_l$ and $f_u$ so that they are continuous at
    eigenvalues of $\operator{A}$. By Lemma~\ref{lemma:step-function-conv},
    we have
 $     f_l(\operator{A}_n)\xrightarrow{s}f_l(\operator{A})$ and
 $     f_u(\operator{A}_n)\xrightarrow{s}f_u(\operator{A})$.
    Because strong convergence implies weak convergence, we have also
    \begin{align*}
      \langle \phi,f_l(\operator{A}_n)\,\phi\rangle
      &\rightarrow
      \langle \phi, f_l(\operator{A})\,\phi\rangle,\\
      \langle \phi,f_u(\operator{A}_n)\,\phi\rangle
      &\rightarrow
      \langle \phi, f_u(\operator{A})\,\phi\rangle.
    \end{align*}
    We see from the definition of $f_l$ and $f_u$ in
    Lemma~\ref{lemma:Darboux--Stieltjes} that
    because, for example,
    \begin{align*}
      \langle \phi, f_l(\operator{A})\,\phi\rangle
      &=
      \int_{\sigma(\operator{A})}f_l(t)\,d\|\operator{E}(t)\,\phi\|^2
      \leq
      \int_{\sigma(\operator{A})}f(t)\,d\|\operator{E}(t)\,\phi\|^2
      =
      \langle \phi, f(\operator{A})\,\phi\rangle,
    \end{align*}
    we have
    \begin{equation*}
      \langle \phi, f_l(\operator{A})\,\phi\rangle\leq
      \langle \phi, f(\operator{A})\,\phi\rangle\leq
      \langle \phi, f_u(\operator{A})\,\phi\rangle,
    \end{equation*}
    and
    \begin{equation*}
      \langle \phi, f_l(\operator{A}_n)\,\phi\rangle\leq
      \langle \phi, f(\operator{A}_n)\,\phi\rangle\leq
      \langle \phi, f_u(\operator{A}_n)\,\phi\rangle
    \end{equation*}
    as well.
    By Lemma~\ref{lemma:Darboux--Stieltjes} we also have
  $\langle \phi, (f_u(\operator{A})-f_l(\operator{A}))\,\phi\rangle<\epsilon$,
    and thus, we get the following inequalities:
    \begin{align*}
      \langle \phi,f(\operator{A})\,\phi\rangle-\epsilon
      &<
      \langle\phi, f_l(\operator{A})\,\phi\rangle
      =
      \liminf_{n\rightarrow\infty} \langle\phi, f_l(\operator{A}_n)\,\phi\rangle,\\
      \langle \phi,f(\operator{A})\,\phi\rangle+\epsilon
      &>
      \langle\phi, f_u(\operator{A})\,\phi\rangle
      =
      \limsup_{n\rightarrow\infty} \langle\phi, f_u(\operator{A}_n)\,\phi\rangle.
    \end{align*}
    By \cite[Theorem~3.17 (b)]{Rudin:1976}, there is N so that for all
    $n\geq N$,
    \begin{equation*}    
      \langle \phi,f(\operator{A})\,\phi\rangle-\epsilon
      <
      \langle \phi, f_l(\operator{A}_n)\,\phi\rangle
      \leq
      \langle \phi, f(\operator{A}_n)\,\phi\rangle
      \leq
      \langle \phi, f_u(\operator{A}_n)\,\phi\rangle
      <
      \langle\phi, f(\operator{A})\,\phi\rangle+\epsilon,
    \end{equation*}
    and hence
   $ \langle \phi, f(\operator{A}_n)\,\phi\rangle
    \rightarrow\langle \phi, f(\operator{A})\,\phi\rangle$ as
    $n\rightarrow\infty$.

    The proof of statement 2 follows from 1 by the polarization identity
    \eqref{eq:polarization}.
    For the proof of statement 3, we take an arbitrary $\phi$, and set
    $\psi=f(\operator{A})\,\phi$. Then
    \begin{align*}
      \|(f(\operator{A}_n)-f(\operator{A}))\,\phi\|
      & =
      \langle \phi, f(\operator{A}_n)^2\,\phi\rangle-
      \langle \psi, f(\operator{A}_n)\,\phi\rangle-
      \langle f(\operator{A}_n)\,\phi,\psi\rangle+
      \langle \psi, \psi\rangle\\
      & \rightarrow
      \langle \phi, f(\operator{A})^2\,\phi\rangle-
      \langle \psi, f(\operator{A})\,\phi\rangle-
      \langle f(\operator{A})\,\phi,\psi\rangle+
      \langle \phi, f(\operator{A})^2\,\phi\rangle\\      
      &= 0
    \end{align*}
    as $n\rightarrow\infty$ because $f^2$ is Darboux--Stieltjes integrable
    when $f$ is \cite[Theorem~6.13]{Rudin:1976}.
\end{proof}
By this theorem, we can extend the convergence of
\eqref{eq:bounded-convergence} to functions $f$ that are discontinuous
but Darboux--Stieltjes integrable with respect to $\operator{E}(t)$.
As noted in \cite[Theorem~4]{Sarmavuori+Sarkka:2019},
because we have strong convergence, we also have convergence for numerical
integrals of products of functions \eqref{eq:product-convergence} when
all the functions are Darboux--Stieltjes integrable.

It is good to note that the eigenvalues of $\operator{A}$
are the only points of discontinuity of the family
$\operator{E}(t)$, and points of continuity $t_0$ of
$\operator{E}(t)$ are also the points of continuity for 
$\rho(t)=\langle \phi,\operator{E}(t)\,\psi\rangle$.
The reason for this is because, by definition, 
$\operator{E}(t)\xrightarrow{s}\operator{E}(t_0)$ when
$t\rightarrow t_0$. Strong convergence implies weak convergence
\cite{Riesz+Nagy:1955,Akhiezer+Glazman:1993,Kato:1995,Kreyszig:1989,Reed+Simon:1981,Weidmann:1980,Simon:2015}, which means that 
$\langle\phi,\operator{E}(t)\,\psi\rangle\rightarrow 
\langle\phi,\operator{E}(t_0)\,\psi\rangle$ for all $\phi,\psi$ in
the Hilbert space. It is still possible that for some
$\phi$ and $\psi$, 
$\langle \phi, \operator{E}(t)\,\psi\rangle$ is continuous
at some points that are eigenvalues of $\operator{A}$ and hence
where $\operator{E}(t)$ is not continuous.

If a function $f$
is Riemann--Stieltjes integrable with respect to
$\rho(t)=\|\operator{E}(t)\,\phi\|$ for all $\phi$, then $f$ must be continuous
on all eigenvalues of $\operator{A}$ \cite[p.~251]{Kestelman:1960}.
Then we can say that
$f$ is Riemann--Stieltjes integrable with respect to $\operator{E}(t)$
or with respect to $\operator{A}$.
By Theorem~\ref{thm:Darboux-convergence} statement 3,
we always have convergence for Riemann--Stieltjes integrable functions.
Therefore, we notice that Riemann--Stieltjes integrability is a suitable
criterion for convergence in terms of discontinuity of the function.
In order to have a discontinuity criterion for unbounded functions alike,
we extend the definition of Riemann--Stieltjes integrability in
the following manner.
\begin{definition}
  Function $f$ is Riemann--Stieltjes integrable with respect to
  self-adjoint operator $\operator{A}$ if $f$ is Riemann--Stieltjes
  integrable with respect to the spectral family of $\operator{A}$
  over any finite interval with endpoints that are not eigenvalues
  of $\operator{A}$.
\end{definition}
If an unbounded function $f$ is Riemann--Stieltjes integrable with respect
to self-adjoint operator $\operator{A}$,
then for all $\phi\in\vectorspace{D}(\operator{A})$, there exists
$f(\operator{A})\,\phi=\int_{-\infty}^\infty f(t)\,d\operator{E}(t)\,\phi$
as an improper integral that is a limit of Riemann--Stieltjes integrals
over a finite interval.

\subsection{Strong resolvent convergence for unbounded functions}
\label{sec:strong-resolvent-convergence-for-unbounded-functions}
We can now show that an unbounded function of an operator converges in the
strong resolvent sense. However, as discussed below the result of this theorem
looks more useful than it actually is.

\begin{theorem}
  \label{thm:unbounded-function-srs}
  Let $\operator{A}_n$ and $\operator{A}$ be self-adjoint operators, and let
  $f$ be such a continuous real function defined on $\mathbb{R}$
  that $f(\operator{A}_n)$ and $f(\operator{A})$ are densely defined. 
  Then
  \begin{equation*}
    \operator{A}_n\xrightarrow{srs}\operator{A}
    \Rightarrow
    f(\operator{A}_n)\xrightarrow{srs}f(\operator{A}).
  \end{equation*}
\end{theorem}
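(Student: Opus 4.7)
The plan is to reduce the unbounded case to the bounded continuous case already settled by Theorem~\ref{thm:bounded-function-srs}. By definition, to establish $f(\operator{A}_n)\xrightarrow{srs}f(\operator{A})$ it suffices to exhibit some nonreal $z$ for which $(f(\operator{A}_n)-z)^{-1}\xrightarrow{s}(f(\operator{A})-z)^{-1}$. The natural choice is $z=i$: define $h:\mathbb{R}\to\mathbb{C}$ by $h(t)=1/(f(t)-i)$. Since $f$ is continuous and real-valued, $h$ is continuous on all of $\mathbb{R}$, and the bound $|f(t)-i|^2=f(t)^2+1\geq 1$ gives $|h(t)|\leq 1$, so $h$ is bounded. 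Hence Theorem~\ref{thm:bounded-function-srs} applies directly to the hypothesis $\operator{A}_n\xrightarrow{srs}\operator{A}$, yielding $h(\operator{A}_n)\xrightarrow{s}h(\operator{A})$.

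The second ingredient is the composition rule of the Borel functional calculus for self-adjoint operators: for a self-adjoint $\operator{B}$ with spectral family $\operator{E}_{\operator{B}}$, a real Borel function $f$, and a Borel function $h$ on $\mathbb{R}$, one has $h(f(\operator{B}))=(h\circ f)(\operator{B})$; this follows from the image-measure interpretation of the spectral family of $f(\operator{B})$. Applied to $\operator{B}=\operator{A}$ (and $\operator{A}_n$) with the $h$ above, this gives $h(\operator{A})=(f(\operator{A})-i)^{-1}$ and $h(\operator{A}_n)=(f(\operator{A}_n)-i)^{-1}$. Substituting into the strong limit established in the previous paragraph produces exactly
\begin{equation*}
  (f(\operator{A}_n)-i)^{-1}\xrightarrow{s}(f(\operator{A})-i)^{-1},
\end{equation*}
which is the definition of $f(\operator{A}_n)\xrightarrow{srs}f(\operator{A})$.

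The density hypothesis in the statement ensures that $f(\operator{A}_n)$ and $f(\operator{A})$ are genuine self-adjoint operators, so that strong resolvent convergence is well-defined and the resolvents $(f(\operator{A})-i)^{-1}$ exist as everywhere-defined bounded operators. In fact, for a real Borel function applied to a self-adjoint operator, density of the domain is automatic, but flagging it avoids any ambiguity in the statement.

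The principal technical point to take care of is the composition rule; once that is invoked, the proof is a two-line reduction. A secondary concern is choice of $z$: any nonreal $z$ works, but taking $z=i$ makes the boundedness $|h(t)|\leq 1$ transparent. The uniformity of this bound independent of $f$'s growth is exactly what lets the unbounded case inherit the convergence from the bounded case, and this is the conceptual content of the proof.
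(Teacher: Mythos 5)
Your proof is correct and follows essentially the same route as the paper: both reduce to Theorem~\ref{thm:bounded-function-srs} by applying the bounded continuous function $t\mapsto (f(t)-z)^{-1}$ (the paper keeps $z$ as a generic nonreal number where you fix $z=i$) and then identifying the result with the resolvent of $f(\operator{A})$ via the composition rule of the functional calculus, which the paper invokes implicitly and you state explicitly.
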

\begin{proof}
  Operators $\operator{B}_n=f(\operator{A}_n)$ and
  $\operator{B}=f(\operator{A})$ are self-adjoint.
  Function
  $
  \tilde{f}(x)=(f(x)-z)^{-1}
  $
  is bounded and continuous for nonreal $z$.
  By Theorem~\ref{thm:bounded-function-srs},
  $\tilde{f}(\operator{A}_n)\xrightarrow{s}\tilde{f}(\operator{A})$, and
  we have
  $
    (\operator{B}_n-z)^{-1}=\tilde{f}(\operator{A}_n)
    \xrightarrow{s}\tilde{f}(\operator{A})=(\operator{B}-z)^{-1},
$
  which means
  by definition of the strong resolvent convergence that
  $f(\operator{A}_n)=\operator{B}_n\xrightarrow{srs}\operator{B}=
  f(\operator{A})$.
\end{proof}
It is now important to note that although the above theorem ensures 
strong resolvent convergence, it does not guarantee convergence of the inner product
$\langle \phi, f(\operator{A}_n)\,\phi\rangle$, which is what we want.
It only guarantees the existence of a sequence $\phi_n\rightarrow \phi$, for which
$\langle \phi, f(\operator{A}_n)\,\phi_n\rangle$ converges, as is shown
by the following theorem.
\begin{theorem}
  \label{thm:graph-convergence}
  Let self-adjoint operators $\operator{A}_n$ and $\operator{A}$ be
  such that $\operator{A}_n\xrightarrow{srs}\operator{A}$.
  Then
  for each $\phi\in\vectorspace{D}(\operator{A})$, there is a sequence
  $\phi_n\in\vectorspace{D}(\operator{A}_n)$ such that
  $
  \|\phi_n-\phi\|+\|\operator{A}_n\,\phi_n-\operator{A}\,\phi\|\rightarrow 0
  $
    as $n\rightarrow \infty.
    $
\end{theorem}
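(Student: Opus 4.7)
The plan is to produce the sequence $\phi_n$ explicitly by using the resolvents. Fix a nonreal $z$ for which strong resolvent convergence $(\operator{A}_n - z)^{-1} \xrightarrow{s} (\operator{A}-z)^{-1}$ holds, and given $\phi \in \vectorspace{D}(\operator{A})$ set
\begin{equation*}
  \psi = (\operator{A}-z)\,\phi, \qquad \phi_n = (\operator{A}_n-z)^{-1}\,\psi.
\end{equation*}
Since the resolvent of a self-adjoint operator is a bounded operator mapping the whole Hilbert space into the domain of the operator, we immediately get $\phi_n \in \vectorspace{D}(\operator{A}_n)$, so the candidate sequence is well-defined and lies in the required domains.

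Next I would establish $\phi_n \to \phi$. This is immediate from the definition of strong resolvent convergence applied to the fixed vector $\psi$:
\begin{equation*}
  \phi_n = (\operator{A}_n-z)^{-1}\,\psi \longrightarrow (\operator{A}-z)^{-1}\,\psi = (\operator{A}-z)^{-1}\,(\operator{A}-z)\,\phi = \phi.
\end{equation*}
The identity $(\operator{A}-z)^{-1}(\operator{A}-z)\phi = \phi$ uses that $\phi \in \vectorspace{D}(\operator{A})$, which is exactly our hypothesis.

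For the second term, I would compute $\operator{A}_n\,\phi_n$ directly from the definition of $\phi_n$. Since $\phi_n = (\operator{A}_n - z)^{-1}\psi$, applying $(\operator{A}_n - z)$ gives $(\operator{A}_n - z)\phi_n = \psi$, hence
\begin{equation*}
  \operator{A}_n\,\phi_n = \psi + z\,\phi_n \longrightarrow \psi + z\,\phi = (\operator{A}-z)\,\phi + z\,\phi = \operator{A}\,\phi,
\end{equation*}
where the limit uses the convergence $\phi_n \to \phi$ just proved. Combining the two limits yields $\|\phi_n-\phi\| + \|\operator{A}_n\phi_n - \operator{A}\phi\| \to 0$ as required.

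There is no real obstacle here once the right sequence is guessed: the construction $\phi_n := (\operator{A}_n - z)^{-1}(\operator{A} - z)\phi$ is the canonical choice, and both convergences reduce to a single application of the strong resolvent hypothesis plus an algebraic identity. The only subtle point worth underlining is that this construction works for any single nonreal $z$, so it is fine that the definition of strong resolvent convergence only supplies one such $z$; no appeal to an additional result about independence of $z$ in the half-plane is needed.
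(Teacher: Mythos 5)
Your proof is correct and self-contained. The paper does not actually prove this theorem --- it only cites Stummel's Satz (14) --- but immediately after the theorem it names exactly your sequence $\phi_n=(\operator{A}_n-z)^{-1}(\operator{A}-z)\,\phi$ as the ``example sequence,'' so you have in effect supplied the short argument the paper outsources: $\phi_n\in\vectorspace{D}(\operator{A}_n)$ because resolvents map into the domain, $\phi_n\to\phi$ is one application of the strong resolvent hypothesis to the fixed vector $(\operator{A}-z)\phi$, and $\operator{A}_n\phi_n=(\operator{A}-z)\phi+z\,\phi_n\to\operator{A}\phi$ follows algebraically. Your closing remark is also right: since the paper defines strong resolvent convergence as convergence of the resolvents at \emph{some} nonreal $z$, the construction only needs that one $z$, and no independence-of-$z$ lemma is required.
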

\begin{proof}
  See \cite[Satz (14)]{Stummel:1972}.
\end{proof}

An example sequence of $\phi_n$ is
$\phi_n=(\operator{A}_n-z)^{-1}\,(\operator{A}-z)\,\phi$, where $z$ is any
nonreal number. For nonreal $z$, the sequence $\phi_n$ is not real-valued.
However, it is often possible to find a real vector sequence $(\phi_n)$
based on a dominating 
function $h^2\geq \abs{f}$ such that 
$\langle\phi_n,f(\operator{A}_n)\,\phi_n\rangle\rightarrow\langle\phi,\operator{A}\,\phi\rangle$. 
In the case of numerical integration, this changes the construction of the
weights of the quadrature rule so that they are based on the function
sequence
$(\phi_n)$ instead of a fixed function $\phi$, but the convergence
of such quadrature can be proved as is shown by the following theorem.
\begin{theorem}
  \label{thm:reweighted-convergence}
  Let self-adjoint operators $\operator{A}_n$ and $\operator{A}$ be
  such that $\operator{A}_n\xrightarrow{srs}\operator{A}$.
  Let $f$ and $h$ be Riemann--Stieltjes integrable functions with respect to
  $\operator{A}$ so that
  $\abs{f(\mat{x})}\leq h^2(\mat{x})$,
  and $h^2(\mat{x})\geq c > 0$ for all
  $x\in\sigma(\operator{A})\cap\sigma(\operator{A}_n)$.
    For $\phi\in\vectorspace{D}(h(\operator{A}))$, as $n\rightarrow\infty$,
  \begin{equation*}
    (h(\operator{A}_n))^{-1}\,f(\operator{A}_n)\,
    (h(\operator{A}_n))^{-1}\,h(\operator{A})\,\phi
    \rightarrow (h(\operator{A}))^{-1}\,f(\operator{A})\,\phi
  \end{equation*}
  and hence
  \begin{equation*}
    \langle h(\operator{A})\,\phi, (h(\operator{A}_n))^{-1}\,f(\operator{A}_n)\,
    (h(\operator{A}_n))^{-1}\,h(\operator{A})\,\phi\rangle
    \rightarrow \langle \phi,f(\operator{A})\,\phi\rangle.
  \end{equation*}
\end{theorem}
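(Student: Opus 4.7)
The plan is to reduce the claim to a bounded, Riemann--Stieltjes integrable situation so that Theorem~\ref{thm:Darboux-convergence}~(3) can be applied directly. Introduce the quotient $g(x) = f(x)/h^2(x)$. The hypothesis $\abs{f}\leq h^2$ gives $\abs{g}\leq 1$, so $g$ is bounded, and the uniform lower bound $h^2\geq c>0$ makes $1/h^2$ bounded (by $1/c$) and introduces no new discontinuities. Consequently the discontinuities of $g$ are contained in the union of those of $f$ and $h$, which by Riemann--Stieltjes integrability of $f$ and $h$ with respect to $\operator{A}$ avoid the eigenvalues of $\operator{A}$; together with boundedness this gives Riemann--Stieltjes integrability of $g$ with respect to $\operator{A}$.

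Next I would invoke the Borel functional calculus for a single self-adjoint operator, viewed as a $*$-algebra homomorphism of bounded Borel functions, to identify
\begin{equation*}
  g(\operator{A}_n) \;=\; (h(\operator{A}_n))^{-1}\,f(\operator{A}_n)\,(h(\operator{A}_n))^{-1}
\end{equation*}
as a bounded operator on the whole Hilbert space, using that $(h(\operator{A}_n))^{-1} = (1/h)(\operator{A}_n)$ is bounded of norm at most $1/\sqrt{c}$ since $h^2\geq c$ forces the essential range of $h$ to avoid $(-\sqrt{c},\sqrt{c})$. Applying Theorem~\ref{thm:Darboux-convergence}~(3) to the bounded, Riemann--Stieltjes integrable $g$ under the hypothesis $\operator{A}_n\xrightarrow{srs}\operator{A}$ then yields $g(\operator{A}_n)\xrightarrow{s} g(\operator{A})$.

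Because $\phi\in\vectorspace{D}(h(\operator{A}))$, the vector $\psi = h(\operator{A})\,\phi$ lies in the Hilbert space, and strong convergence applied to this particular $\psi$ gives
\begin{equation*}
  (h(\operator{A}_n))^{-1}\,f(\operator{A}_n)\,(h(\operator{A}_n))^{-1}\,h(\operator{A})\,\phi \;\longrightarrow\; g(\operator{A})\,h(\operator{A})\,\phi.
\end{equation*}
A further functional-calculus identification $g\cdot h = f/h$, together with the estimate $\abs{f/h}\leq h$ (which combined with $\phi\in\vectorspace{D}(h(\operator{A}))$ puts $\phi$ in the domain of $(f/h)(\operator{A})$), lets me rewrite the right-hand side as $(f/h)(\operator{A})\,\phi = (h(\operator{A}))^{-1}\,f(\operator{A})\,\phi$, establishing the first claim. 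The inner product statement then follows from continuity of the inner product together with self-adjointness of $h(\operator{A})$:
\begin{equation*}
  \langle h(\operator{A})\,\phi,\, g(\operator{A})\,h(\operator{A})\,\phi\rangle \;=\; \langle \phi,\, h(\operator{A})\,(f/h)(\operator{A})\,\phi\rangle \;=\; \langle \phi,\, f(\operator{A})\,\phi\rangle.
\end{equation*}

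The main obstacle I expect is not the analytic convergence itself, which is essentially routed through the known Theorem~\ref{thm:Darboux-convergence}, but rather the bookkeeping around the functional calculus: the unbounded factors $f(\operator{A})$ and $h(\operator{A})$ have to be combined with the bounded $(h(\operator{A}))^{-1}$ so that the purely algebraic identities $g=f/h^2$ and $g\cdot h = f/h$ at the level of measurable functions translate into correct operator identities on the appropriate domains. Verifying this amounts to the spectral integrability bounds $\int\abs{f/h}^2\,d\|\operator{E}(t)\,\phi\|^2 \leq \int h^2\,d\|\operator{E}(t)\,\phi\|^2 < \infty$ already tacitly used above; once these are in place, the rest of the argument reduces to a single application of the bounded-case convergence theorem.
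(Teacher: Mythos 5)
Your proposal is correct and follows essentially the same route as the paper: both define the bounded quotient $q(x)=f(x)/h^2(x)$ with $\abs{q}\leq 1$, establish its Riemann--Stieltjes integrability, invoke the strong convergence result for bounded Riemann--Stieltjes integrable functions to get $q(\operator{A}_n)\xrightarrow{s}q(\operator{A})$, and then evaluate at the fixed vector $h(\operator{A})\,\phi$. Your version merely spells out the functional-calculus bookkeeping and the domain estimates that the paper's four-line proof leaves implicit.
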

\begin{proof}
  Function $q(\mat{x})=\frac{f(\mat{x})}{h^2(\mat{x})}$ is bounded so that
  $\abs{q(\mat{x})}\leq 1$. Function $q$ is Riemann--Stieltjes integrable
  \cite[Theorems 6.11 and 6.13]{Rudin:1976}.
  Therefore, the operators
  $\operator{B}_n=(h(\operator{A}_n))^{-1}\,f(\operator{A}_n)\,
  (h(\operator{A}_n))^{-1}$
  are bounded,
  operator norm $\|\operator{B}_n\|\leq 1$ and
  \begin{equation*}
    (h(\operator{A}_n))^{-1}\,f(\operator{A}_n)\,
    (h(\operator{A}_n))^{-1}
    \xrightarrow{s}
    (h(\operator{A}))^{-1}\,
    f(\operator{A})\,(h(\operator{A}))^{-1}.
  \end{equation*}
  Because $\phi\in \vectorspace{D}(h(\operator{A}))$, the result follows.
\end{proof}
\begin{remark}
  This theorem suggests us a new form of convergent quadrature rule.
  If we can compute
  the Fourier
  coefficients $\tilde{v}_i=\langle \phi_i, h\rangle$, using the above theorem, we can
  construct a sequence of convergent quadrature rules
  \begin{equation*}
    \int_\Omega f(g(\mat{x}))\,w(\mat{x})\,d\mat{x}\approx
    \tilde{\mat{v}}_n^*\,f(\mat{M}_n[g])\,\tilde{\mat{v}}_n,
  \end{equation*}
  where vector $\mat{\tilde{v}}_n=(h(\mat{M}_n[g]))^{-1}\,\mat{\tilde{v}}$,
  and components
  of vector $\mat{\tilde{v}}$ are $\tilde{v}_i$ respectively.
  The nodes are then the eigenvalues
  $\lambda_j$ of each finite matrix approximation $\mat{M}_n[g]$ and the weights are
  \begin{equation}
    \label{eq:general-weights}
    w_j=\frac{\abs{\mat{\tilde{v}}^*\,\mat{\tilde{u}}_j}^2}{h^2(\lambda_j)},
  \end{equation}
  where
  $\mat{\tilde{u}}_j$ are the unit length eigenvectors of $\mat{M}_n[g]$
  corresponding to the eigenvalues
  $\lambda_j$ respectively. The numerical integral of
  \eqref{eq:numint-def} is a special case
  of this approach with $h=1$. We give a numerical example of the effects
  of this theorem
  in practice in Section~\ref{sec:numerical-example}.
\end{remark}

\subsection{Convergence for quadratically bounded functions}
\label{sec:convergence-for-quadratically-bounded-functions}
For convergence without an approximating sequence for
vectors $\phi\in\vectorspace{D}(f(\operator{A}))$, we can prove 
a limited form of convergence for a limited class of unbounded
functions. 

The following theorem shows that if we can show convergence for one function,
it will follow for all smaller functions.
\begin{theorem}
  \label{thm:scalar-dominated-convergence}
  Let $\operator{A}_n$ and $\operator{A}$ be self-adjoint operators
  such that $\operator{A}_n\xrightarrow{srs}\operator{A}$. Let
  $f$ and $h$ be such unbounded functions that they are
  Riemann--Stieltjes integrable with respect
  to $\operator{A}$,
  $\abs{f(x)}\leq \abs{h(x)}$ for all $x\in\mathbb{R}$ and
  $f$ and $h$ are continuous at eigenvalues of $\operator{A}$.
  Then
  for any $\phi\in\vectorspace{D}(h(\operator{A}))$
  \begin{equation*}
    \langle \phi, h(\operator{A}_n)\,\phi\rangle\rightarrow 
    \langle \phi, h(\operator{A})\,\phi\rangle
    \Rightarrow
    \langle \phi, f(\operator{A}_n)\,\phi\rangle\rightarrow 
    \langle \phi, f(\operator{A})\,\phi\rangle
      ~as~ n\rightarrow\infty.
  \end{equation*}
\end{theorem}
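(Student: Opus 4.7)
The approach is a truncation argument: cut $f$ and $h$ at a level $R$, handle the bounded parts by the already-established convergence for bounded Riemann--Stieltjes integrable functions (Theorem~\ref{thm:Darboux-convergence}), and control the tail past $|t|>R$ by $|f|\le|h|$ together with the hypothesis on $h$.

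First, fix $R>0$ with $\pm R$ not eigenvalues of $\operator{A}$ and define the truncations $f_R=f\,\chi_{[-R,R]}$ and $h_R=h\,\chi_{[-R,R]}$. Both are bounded, Riemann--Stieltjes integrable with respect to $\operator{A}$, and continuous at all eigenvalues of $\operator{A}$ (the characteristic factor is continuous at eigenvalues because $\pm R$ avoid them). Theorem~\ref{thm:Darboux-convergence}~(1) then yields
\begin{equation*}
  \langle\phi,f_R(\operator{A}_n)\,\phi\rangle\to\langle\phi,f_R(\operator{A})\,\phi\rangle,\qquad
  \langle\phi,h_R(\operator{A}_n)\,\phi\rangle\to\langle\phi,h_R(\operator{A})\,\phi\rangle.
\end{equation*}
Using the spectral representation, split
\begin{equation*}
  \langle\phi,f(\operator{A}_n)\,\phi\rangle=\langle\phi,f_R(\operator{A}_n)\,\phi\rangle+\int_{|t|>R}f(t)\,d\|\operator{E}_n(t)\,\phi\|^2,
\end{equation*}
and analogously for $\operator{A}$. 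The truncated terms converge by the above; it remains to bound the two tail integrals uniformly.

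For the tail, $|f|\le|h|$ gives
\begin{equation*}
  \left|\int_{|t|>R}f(t)\,d\|\operator{E}_n(t)\,\phi\|^2\right|\le\int_{|t|>R}|h(t)|\,d\|\operator{E}_n(t)\,\phi\|^2.
\end{equation*}
In the natural case $h\ge 0$ (the intended setup for a dominating function), the right side equals $\langle\phi,h(\operator{A}_n)\,\phi\rangle-\langle\phi,h_R(\operator{A}_n)\,\phi\rangle$, and by the hypothesis together with the truncated $h$-convergence it tends to $\int_{|t|>R}h(t)\,d\|\operator{E}(t)\,\phi\|^2$ as $n\to\infty$. Since $\phi\in\vectorspace{D}(h(\operator{A}))$ gives $\int h^2\,d\|\operator{E}(t)\,\phi\|^2<\infty$ on a measure of finite total mass $\|\phi\|^2$, Cauchy--Schwarz yields $\int h\,d\|\operator{E}(t)\,\phi\|^2<\infty$, so the limit tail shrinks to zero as $R\to\infty$. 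A standard $\varepsilon/3$-style combination---first choose $R$ so that the limit tail is $<\varepsilon$; then choose $n$ large so that the truncated difference and the discrepancy between the $n$-th tail and the limit tail are each $<\varepsilon$---finishes the proof.

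The main obstacle is the sign-indefinite case: when $h$ may be negative, the hypothesis controls $\int h\,d\|\operator{E}_n(\cdot)\,\phi\|^2$ but not $\int|h|\,d\|\operator{E}_n(\cdot)\,\phi\|^2$, which is what the Cauchy--Schwarz-style tail bound needs. The cleanest remedy is to decompose $h=h^+-h^-$ and argue separately for $h^+$ and $h^-$, each of which is a nonnegative Riemann--Stieltjes integrable dominator for the relevant positive and negative parts of $f$; alternatively one strengthens the hypothesis to apply to $|h|$, which is the intended use in the applications of this result. A secondary technicality, namely that $\int f\,d\|\operator{E}(\cdot)\,\phi\|^2$ and $\int h\,d\|\operator{E}(\cdot)\,\phi\|^2$ are well-defined finite numbers, is settled by Cauchy--Schwarz against the finite-mass spectral measure using $\phi\in\vectorspace{D}(h(\operator{A}))$ and $|f|\le|h|$.
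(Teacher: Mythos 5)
Your proof is essentially correct and, unlike the paper, self-contained: the paper's entire proof of this theorem is a citation to Th\'eor\`eme~3 of Jouravsky (1928), which is precisely the scalar uniform-integrability statement you reprove here in spectral-measure language. Your route --- truncate at a level $\pm R$ avoiding eigenvalues of $\operator{A}$, invoke Theorem~\ref{thm:Darboux-convergence} for the bounded truncations $f_R$ and $h_R$, and squeeze the tails via $|f|\le|h|$ together with the convergence hypothesis for $h$ --- is the standard argument and is what the cited classical result encapsulates; what your version buys is that the reader need not consult the 1928 reference and translate it into the operator setting.

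The one point that needs repair is your treatment of sign-indefinite $h$. You correctly observe that the hypothesis controls $\int h(t)\,d\|\operator{E}_n(t)\,\phi\|^2$ but not $\int |h(t)|\,d\|\operator{E}_n(t)\,\phi\|^2$, which is what the tail estimate requires. However, your first proposed remedy --- decomposing $h=h^+-h^-$ and ``arguing separately'' --- does not close this gap: the hypothesis $\langle\phi,h(\operator{A}_n)\,\phi\rangle\to\langle\phi,h(\operator{A})\,\phi\rangle$ does not imply the corresponding convergence for $h^+$ and $h^-$ individually, since spectral mass escaping to regions where $h>0$ can cancel against mass escaping to regions where $h<0$; for such a sequence the conclusion genuinely fails, e.g.\ for $f=|h|$. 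The correct reading is your second suggestion: the dominating function should be nonnegative (equivalently, the hypothesis should be imposed on $|h|$). This is harmless in context, because every invocation of the theorem in the paper (in Theorems~\ref{thm:dominated-convergence}, \ref{thm:linearly-bounded-convergence}, \ref{thm:m-positive-convergence} and \ref{thm:finite-improper-integral}) uses a nonnegative dominator such as $|x|$, $a+b\,|x|$, $|h|^2$, or an $m$-positive or operator convex bound. With that reading, your argument is complete.
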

\begin{proof}
  This follows from \cite[Th\'eor\`eme~3]{Jouravsky:1928}.
\end{proof}

The following lemma makes it possible to show not only the convergence of the 
norms of the vectors but the vectors themselves as well.
\begin{lemma}
  \label{lemma:convergence-of-norm}
  Let self-adjoint operators $\operator{A}_n$ converge in the strong resolvent 
  sense to self-adjoint operator $\operator{A}$. Let 
  function 
  $h:~\mathbb{R}\mapsto\mathbb{C}$ and
  vector
  $\phi\in\vectorspace{D}(h(\operator{A}))
  \cap_{n=0}^\infty\vectorspace{D}(h(\operator{A}_n))$.
  If 
  $\|h(\operator{A}_n)\,\phi\|\rightarrow\|h(\operator{A})\,\phi\|$, then
  also $h(\operator{A}_n)\,\phi\rightarrow h(\operator{A})\,\phi$
  as $n\rightarrow\infty$.
\end{lemma}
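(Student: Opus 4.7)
The plan is to establish the strong convergence $h(\operator{A}_n)\,\phi\to h(\operator{A})\,\phi$ by splitting $h$ into a bounded principal part and a tail whose smallness is forced by the norm hypothesis. For each $M>0$, define the truncation $h_M(t)=h(t)\,\chi_{\{|h(t)|\leq M\}}(t)$ and the tail $r_M(t)=h(t)-h_M(t)$. Since $h_M$ and $r_M$ have disjoint supports in $t$, the spectral theorem gives $h_M(\operator{A}_n)\,\phi\perp r_M(\operator{A}_n)\,\phi$, yielding the Pythagorean identity
\begin{equation*}
\|h(\operator{A}_n)\,\phi\|^2=\|h_M(\operator{A}_n)\,\phi\|^2+\|r_M(\operator{A}_n)\,\phi\|^2,
\end{equation*}
and likewise with $\operator{A}$ in place of $\operator{A}_n$.

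Next I would apply the triangle inequality
\begin{equation*}
\|h(\operator{A}_n)\,\phi-h(\operator{A})\,\phi\|\leq\|r_M(\operator{A}_n)\,\phi\|+\|h_M(\operator{A}_n)\,\phi-h_M(\operator{A})\,\phi\|+\|r_M(\operator{A})\,\phi\|
\end{equation*}
and control each summand. For the middle summand, $h_M$ is bounded by $M$ and, for $M$ chosen off the countable set of thresholds that would create a discontinuity at an eigenvalue of $\operator{A}$, Riemann--Stieltjes integrable with respect to $\operator{A}$; Theorem~\ref{thm:Darboux-convergence} then yields $h_M(\operator{A}_n)\,\phi\to h_M(\operator{A})\,\phi$, and the same theorem applied to $|h_M|^2$ also gives $\|h_M(\operator{A}_n)\,\phi\|\to\|h_M(\operator{A})\,\phi\|$. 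For the third summand, $\phi\in\vectorspace{D}(h(\operator{A}))$ implies $\|r_M(\operator{A})\,\phi\|^2=\int_{\{|h(t)|>M\}}|h(t)|^2\,d\|\operator{E}(t)\,\phi\|^2\to 0$ as $M\to\infty$ by dominated convergence. For the first summand, the norm hypothesis combined with the Pythagorean identities and the convergence $\|h_M(\operator{A}_n)\,\phi\|\to\|h_M(\operator{A})\,\phi\|$ forces $\|r_M(\operator{A}_n)\,\phi\|\to\|r_M(\operator{A})\,\phi\|$ for each admissible $M$.

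Putting these pieces together: given $\epsilon>0$, pick $M$ so large that $\|r_M(\operator{A})\,\phi\|<\epsilon/3$, then choose $n_0$ so that for all $n\geq n_0$ the middle summand is below $\epsilon/3$ and $\|r_M(\operator{A}_n)\,\phi\|$ lies within $\epsilon/3$ of $\|r_M(\operator{A})\,\phi\|$. With these choices, the triangle inequality delivers $\|h(\operator{A}_n)\,\phi-h(\operator{A})\,\phi\|<\epsilon$, completing the proof.

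The main obstacle will be the regularity required to invoke Theorem~\ref{thm:Darboux-convergence} on $h_M$. A sharp truncation can fail to be continuous at an eigenvalue $\lambda$ of $\operator{A}$ precisely when $M=|h(\lambda)|$, but since the eigenvalues form a countable set this excludes only countably many thresholds, so a cofinal sequence of admissible $M$ always exists and the argument above can be executed along it. Alternatively, a smooth cutoff $h_M(t)=h(t)\,\eta_M(h(t))$ with $\eta_M$ a continuous bump equal to $1$ on $[-M/2,M/2]$ and $0$ outside $[-M,M]$ sidesteps the discontinuity issue entirely, at the cost of slightly more elaborate bookkeeping in the tail estimate.
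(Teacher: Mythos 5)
Your argument is correct in outline and follows the same strategy as the paper's proof --- split $h$ into a ``good'' part for which strong convergence is already known plus a tail, and use the hypothesis $\|h(\operator{A}_n)\,\phi\|\rightarrow\|h(\operator{A})\,\phi\|$ together with a Pythagorean identity to force the approximants' tails to be small --- but the decomposition itself is genuinely different. The paper truncates in the \emph{spectral parameter}: it picks a finite interval $I$ whose endpoints are not eigenvalues of $\operator{A}$, so that the only new discontinuities introduced by the cutoff are the two endpoints, and the convergence of $\int_I h\,d\operator{E}_k(t)\,\phi$ follows directly from the earlier results. You truncate in the \emph{value} of $h$, setting $h_M=h\,\chi_{\{|h|\leq M\}}$. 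This buys a globally bounded principal part and an exactly orthogonal tail, but it costs you at the point you only partially address: the sharp cutoff is discontinuous on the entire level set $\{t:|h(t)|=M\}$, and Theorem~\ref{thm:Darboux-convergence} requires not merely continuity of $h_M$ at the eigenvalues of $\operator{A}$ but Darboux--Stieltjes integrability of $h_M$ (and of $|h_M|^2$) with respect to the relevant spectral measures. Excluding the countably many $M$ with $M=|h(\lambda)|$ for an eigenvalue $\lambda$ handles the first requirement but not the second; you need the additional (true, but unstated) observation that the level sets $\{|h|=M\}$ are pairwise disjoint for distinct $M$, so only countably many of them can carry positive mass for the finitely many measures $\|\operator{E}(t)\,\phi\|^2$, $\|\operator{E}(t)\,(\phi\pm h_M(\operator{A})\phi)/2\|^2$, etc.\ that enter the proof of statement~3 of that theorem --- hence a cofinal set of fully admissible $M$ still exists. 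Your smooth-cutoff alternative avoids this, and if you choose the cutoffs as a quadratic partition of unity $\eta_M^2+\zeta_M^2=1$ you even retain an exact Pythagorean identity $\|h(\operator{A}_n)\phi\|^2=\|(h\eta_M)(\operator{A}_n)\phi\|^2+\|(h\zeta_M)(\operator{A}_n)\phi\|^2$, so the ``bookkeeping'' cost is small. (Also, your final choice of constants gives $4\epsilon/3$ rather than $\epsilon$; replace $\epsilon/3$ by $\epsilon/4$.) Both proofs share the same implicit regularity assumption on $h$ --- Riemann--Stieltjes integrability with respect to $\operator{A}$ and continuity at its eigenvalues --- which is not stated in the lemma but holds in every application; the paper's interval truncation is marginally cleaner because it introduces only two new discontinuity points instead of a level set.
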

\begin{proof}
  Let $\operator{E}(t)$ and $\operator{E}_n(t)$ be the spectral families
  of $\operator{A}$ and $\operator{A}_n$, respectively.
  We can select an 
  interval $I$ with endpoints that are not eigenvalues of $\operator{A}$
  such that
  \begin{equation}
    \label{eq:norm-in-tail}
    \int_{\mathbb{R}\setminus I} \abs{h(t)}^2\,d\|\operator{E}(t)\,\phi\|^2<
    \frac{\epsilon^2}{16}=\epsilon_1^2.
  \end{equation}
  Because $\operator{A}_n\xrightarrow{srs}\operator{A}$, 
  for all $\epsilon > 0$, there is an $N_1$ such that for all $k>N_1$,
  we have
  \begin{equation}
    \label{eq:convergence-of-norm-in-center}
    \left\lvert
    \int_I \abs{h(t)}^2\,d\|\operator{E}(t)\,\phi\|^2-
    \int_I \abs{h(t)}^2\,d\|\operator{E}_k(t)\,\phi\|^2
    \right\rvert < \frac{\epsilon^2}{8}=\epsilon_2^2
  \end{equation}
  and
  \begin{equation}
    \label{eq:convergence-in-center}
    \left\|
    \int_I h(t)\,d\operator{E}(t)\,\phi-\int_Ih(t)\,d\operator{E}_k(t)\,\phi
    \right\|<\frac{\epsilon}{4}=\epsilon_3.
  \end{equation}
  The convergence of 
  $\|h(\operator{A}_n)\,\phi\|\rightarrow\|h(\operator{A})\,\phi\|$ means that
  for all $\epsilon > 0$, there is an $N_2$ such that for all $k>N_2$, we have
  \begin{align*}
    \epsilon_4=\frac{\epsilon^2}{16}
    >&
    \abs{\|h(\operator{A})\,\phi\|^2-\|h(\operator{A}_k)\,\phi\|^2}\\
    =&
    \left\lvert
    \int_I\abs{h(t)}^2\,d\|\operator{E}(t)\,\phi\|^2+
    \int_{\mathbb{R}\setminus I}|h(t)|^2\,d\|\operator{E}(t)\,\phi\|^2
    \right.\\
    &
    \left.
    -\int_I|h(t)|^2\,d\|\operator{E}_k(t)\,\phi\|^2-
    \int_{\mathbb{R}\setminus I}\abs{h(t)}^2\,d\|\operator{E}_k(t)\,\phi\|^2
    \right\rvert,
  \end{align*}
which means by \eqref{eq:norm-in-tail} and
\eqref{eq:convergence-of-norm-in-center} that
\begin{equation}
  \label{eq:convergence-of-norm-in-tail}
  \int_{\mathbb{R}\setminus I}\abs{h(t)}^2\,d\|\operator{E}_k(t)\,\phi\|^2<
  \epsilon_1^2+\epsilon_2^2 +\epsilon_4^2 =\frac{\epsilon^2}{4}=\epsilon_5
\end{equation}
for all $k>M=\max(N_1,N_2)$.
By \eqref{eq:norm-in-tail}, \eqref{eq:convergence-in-center}, and
  \eqref{eq:convergence-of-norm-in-tail} we have
\begin{align*}
  &\|(h(\operator{A})-h(\operator{A}_k))\,\phi\|\\
  &=
  \left\|
  \int_{\mathbb{R}\setminus I} h(t)\,d\operator{E}(t)\,\phi+
  \int_I h(t)\,d\operator{E}(t)\,\phi
  -\int_{\mathbb{R}\setminus I}h(t)\,d\operator{E}_k(t)\,\phi
  -\int_I h(t)\,d\operator{E}_k(t)\,\phi
  \right\|\\
  &\leq
  \left\|
  \int_Ih(t)\,d\operator{E}(t)\,\phi-\int_Ih(t)\,d\operator{E}_k(t)\,\phi
  \right\|
  +\sqrt{\int_{\mathbb{R}\setminus I}\abs{h(t)}^2\,d\|\operator{E}(t)\,\phi\|^2}\\
  &~~~+\sqrt{\int_{\mathbb{R}\setminus I}\abs{h(t)}^2\,d\|\operator{E}_k(t)\,\phi\|^2}\\
  &<
  \epsilon_3+\epsilon_1+\epsilon_5=\epsilon
\end{align*}
for all $k>M$.
\end{proof}

\begin{theorem}
  \label{thm:dominated-convergence}
  Let $\operator{A}_n$ and $\operator{A}$ be self-adjoint operators
  such that $\operator{A}_n\xrightarrow{srs}\operator{A}$. Let
  $f$ and $h$ be such unbounded functions that they are
  Riemann--Stieltjes integrable with respect
  to $\operator{A}$,
  $\abs{f(x)}\leq \abs{h(x)}$ for all $x\in\mathbb{R}$ and
  $f$ and $h$ are continuous at eigenvalues of $\operator{A}$.
  Then
  for any $\phi\in\vectorspace{D}(h(\operator{A}))$
  \begin{equation*}
    h(\operator{A}_n)\,\phi\rightarrow h(\operator{A})\,\phi
    \Rightarrow
    f(\operator{A}_n)\,\phi\rightarrow f(\operator{A})\,\phi
  ~as~ n\rightarrow\infty.
  \end{equation*}
\end{theorem}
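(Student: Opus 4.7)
The plan is to reduce the desired vector convergence $f(\operator{A}_n)\,\phi \to f(\operator{A})\,\phi$ to a scalar norm convergence $\|f(\operator{A}_n)\,\phi\| \to \|f(\operator{A})\,\phi\|$, and then to invoke Lemma \ref{lemma:convergence-of-norm} with $f$ in the role of $h$. The pointwise bound $|f|\leq|h|$ passes through the spectral calculus into $\|f(\operator{B})\,\psi\|\leq\|h(\operator{B})\,\psi\|$ for any self-adjoint $\operator{B}$, so $\vectorspace{D}(h(\operator{A}))\subset\vectorspace{D}(f(\operator{A}))$ and $\vectorspace{D}(h(\operator{A}_n))\subset\vectorspace{D}(f(\operator{A}_n))$ for every $n$. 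Because the hypothesis $h(\operator{A}_n)\,\phi\to h(\operator{A})\,\phi$ tacitly places $\phi$ in each $\vectorspace{D}(h(\operator{A}_n))$, all quantities $f(\operator{A}_n)\,\phi$ and $f(\operator{A})\,\phi$ are well defined and the domain hypothesis of Lemma \ref{lemma:convergence-of-norm} is met.

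Next I would prove the norm convergence by adapting the tail-splitting argument used in the proof of Lemma \ref{lemma:convergence-of-norm}. Given $\epsilon>0$, choose a bounded interval $I$ with endpoints that are not eigenvalues of $\operator{A}$ and such that the $h^2$-tail $\int_{\mathbb{R}\setminus I}|h(t)|^2\,d\|\operator{E}(t)\,\phi\|^2$ is less than $\epsilon^2$. The function $|h|^2\,\chi_I$ is bounded and continuous at the eigenvalues of $\operator{A}$ (the endpoints of $I$ are not eigenvalues, and $h$ is by hypothesis continuous at eigenvalues in the interior), so it is Riemann--Stieltjes integrable with respect to $\operator{A}$, and Theorem \ref{thm:Darboux-convergence} gives
\begin{equation*}
  \int_I |h(t)|^2\,d\|\operator{E}_n(t)\,\phi\|^2
  \;\longrightarrow\;
  \int_I |h(t)|^2\,d\|\operator{E}(t)\,\phi\|^2.
\end{equation*}
The hypothesis $h(\operator{A}_n)\,\phi\to h(\operator{A})\,\phi$ in particular yields $\|h(\operator{A}_n)\,\phi\|^2\to\|h(\operator{A})\,\phi\|^2$. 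Subtracting the bounded-interval part from the total forces the $h^2$-tails with respect to $\operator{E}_n$ to be small for large $n$, exactly as in equation \eqref{eq:convergence-of-norm-in-tail}. The pointwise inequality $|f|^2\leq|h|^2$ then propagates this smallness to the $f^2$-tails with respect to both spectral families.

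Finally, on the compact interval $I$ the function $|f|^2\,\chi_I$ is bounded and Riemann--Stieltjes integrable with respect to $\operator{A}$, so a further application of Theorem \ref{thm:Darboux-convergence} gives
\begin{equation*}
  \int_I |f(t)|^2\,d\|\operator{E}_n(t)\,\phi\|^2
  \;\longrightarrow\;
  \int_I |f(t)|^2\,d\|\operator{E}(t)\,\phi\|^2.
\end{equation*}
Combining this with the uniform tail estimates for $|f|^2$ yields the norm convergence $\|f(\operator{A}_n)\,\phi\|\to\|f(\operator{A})\,\phi\|$, and Lemma \ref{lemma:convergence-of-norm} then produces the desired $f(\operator{A}_n)\,\phi\to f(\operator{A})\,\phi$.

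The main obstacle I anticipate is the control of the $\operator{E}_n$-tails: this is not a hypothesis but must be deduced from the given vector convergence of $h(\operator{A}_n)\,\phi$ together with the bounded-interval convergence for $|h|^2$, and the deduction genuinely requires the hypothesis to provide norm (equivalently, vector) convergence of $h(\operator{A}_n)\,\phi$ rather than merely weak convergence, since only norm convergence supplies the total-mass balance needed to squeeze the tails.
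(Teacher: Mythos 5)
Your proof is correct and follows essentially the same route as the paper: reduce to the norm convergence $\|f(\operator{A}_n)\,\phi\|\rightarrow\|f(\operator{A})\,\phi\|$ via the identity $\|f(\operator{A})\,\phi\|^2=\langle\phi,|f(\operator{A})|^2\,\phi\rangle$ and the bound $|f|^2\leq|h|^2$, then apply Lemma~\ref{lemma:convergence-of-norm} with $f$ in the role of $h$. The only difference is that where the paper simply cites Theorem~\ref{thm:scalar-dominated-convergence} for the scalar dominated-convergence step, you re-derive it by the tail-splitting argument, which is exactly the content of that cited result.
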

\begin{proof}
This follows from Theorem~\ref{thm:scalar-dominated-convergence} and
  Lemma~\ref{lemma:convergence-of-norm} by using 
  identity 
  $\|f(\operator{A})\,\phi\|^2= \langle \phi, \abs{f(\operator{A})}^2\,\phi \rangle$
  for functions $f$ and $h$.
\end{proof}

For example, we can apply this theorem to linearly bounded functions.
\begin{theorem}
  \label{thm:linearly-bounded-convergence}
  Let operator $\operator{A}$ be self-adjoint with an infinite matrix
  representation $\mat{A}_{\infty}$ and vector $\mat{v}\in\ell^2_0$.
  Let finite approximations be
 $   \left[
      \mat{A}_n
      \right]_{i,j}
    =
    \left[
      \mat{A}_\infty
      \right]_{i,j}$ and
    $
    [\mat{v}_n]_i
    =
    [\mat{v}]_i$
  for $i,j=0,1,\ldots,n$. 
  Let function $f$ be Riemann--Stieltjes integrable with respect to
  $\operator{A}$ and
  linearly bounded, that is, for all $x\in\mathbb{R}$,
  $\abs{f(x)}\leq a + b\,\abs{x}$
  for some positive $a$ and $b$.  
  Then,   as $n\rightarrow\infty$,
  \begin{equation*}
    \left[
      \begin{array}{c}
        f(\mat{A}_n)\,\mat{v}_n\\
        \mat{0}_{\infty\times 1}
      \end{array}
      \right]
    \rightarrow
    f
    \left(
    \mat{A}_\infty
    \right)
    \,\mat{v}.
  \end{equation*}
\end{theorem}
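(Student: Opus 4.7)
My plan is to reduce the claim to Theorem~\ref{thm:dominated-convergence} by constructing an explicit scalar dominating function, and to verify the hypothesis of that theorem with the help of Lemma~\ref{lemma:convergence-of-norm}. First, I would embed everything in $\ell^2$: let $\tilde{\mat{A}}_n$ denote the infinite matrix obtained by padding $\mat{A}_n$ with a zero block, as in Theorem~\ref{thm:srs-convergence}. Since $\mat{v}\in\ell^2_0$, for every $n$ larger than the index of the last nonzero component of $\mat{v}$ the vector $\mat{v}$ is the zero-padded version of $\mat{v}_n$, and a direct block multiplication shows that the zero-padded version of $f(\mat{A}_n)\,\mat{v}_n$ coincides with $f(\tilde{\mat{A}}_n)\,\mat{v}$ (the lower block of $\tilde{\mat{A}}_n$ is $0$, and $f(0)$ there multiplies only zero components of $\mat{v}$). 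The desired convergence thus reduces to $f(\tilde{\mat{A}}_n)\,\mat{v}\to f(\mat{A}_\infty)\,\mat{v}$ in $\ell^2$.

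Second, Theorem~\ref{thm:srs-convergence} delivers $\tilde{\mat{A}}_n\xrightarrow{srs}\mat{A}_\infty$ and, more strongly, $\tilde{\mat{A}}_n\,\mat{v}\to\mat{A}_\infty\,\mat{v}$, so in particular $\|\tilde{\mat{A}}_n\,\mat{v}\|\to\|\mat{A}_\infty\,\mat{v}\|$.

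Third, I would take the continuous majorant $h(x)=\sqrt{2\,a^2+2\,b^2\,x^2}$, for which $|f(x)|^2\leq (a+b\,|x|)^2\leq 2\,a^2+2\,b^2\,x^2=h(x)^2$. Because $h$ is continuous on $\mathbb{R}$, it is Riemann--Stieltjes integrable with respect to $\mat{A}_\infty$ and continuous at every eigenvalue. Using $h^2(x)=2\,a^2+2\,b^2\,x^2$ and the spectral calculus,
\begin{equation*}
\|h(\tilde{\mat{A}}_n)\,\mat{v}\|^2
=\langle\mat{v},(2\,a^2\,\operator{I}+2\,b^2\,\tilde{\mat{A}}_n^2)\,\mat{v}\rangle
=2\,a^2\,\|\mat{v}\|^2+2\,b^2\,\|\tilde{\mat{A}}_n\,\mat{v}\|^2,
\end{equation*}
and the same identity holds with $\mat{A}_\infty$ in place of $\tilde{\mat{A}}_n$. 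Together with the convergence of $\|\tilde{\mat{A}}_n\,\mat{v}\|$ this gives $\|h(\tilde{\mat{A}}_n)\,\mat{v}\|\to\|h(\mat{A}_\infty)\,\mat{v}\|$. Since $\mat{v}\in\ell^2_0\subset\vectorspace{D}(\mat{A}_\infty)$ and $\mat{v}\in\vectorspace{D}(\tilde{\mat{A}}_n)$ for all $n$, we also have $\mat{v}\in\vectorspace{D}(h(\mat{A}_\infty))\cap\bigcap_{n=0}^\infty\vectorspace{D}(h(\tilde{\mat{A}}_n))$, so Lemma~\ref{lemma:convergence-of-norm} upgrades this norm convergence to the strong convergence $h(\tilde{\mat{A}}_n)\,\mat{v}\to h(\mat{A}_\infty)\,\mat{v}$.

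Finally, because $|f|\leq h$ and both $f$ and $h$ are Riemann--Stieltjes integrable with respect to $\mat{A}_\infty$ and continuous at its eigenvalues, Theorem~\ref{thm:dominated-convergence} applied with $\phi=\mat{v}$ yields $f(\tilde{\mat{A}}_n)\,\mat{v}\to f(\mat{A}_\infty)\,\mat{v}$, which is the claim via the identification made in the first step. The main care is in that first step, namely the block identification of $f(\mat{A}_n)\,\mat{v}_n$ with $f(\tilde{\mat{A}}_n)\,\mat{v}$, and in the domain inclusions required by Lemma~\ref{lemma:convergence-of-norm}; both become routine once $n$ is large enough that $\mat{v}_n$ captures every nonzero component of $\mat{v}$.
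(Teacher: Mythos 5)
Your proof is correct, and it pursues the same overall strategy as the paper's: reduce the claim to Theorem~\ref{thm:dominated-convergence} by exhibiting a dominating function for which strong convergence on $\mat{v}$ is already established. The difference lies in how that dominating function is obtained. The paper builds up the majorant $a+b\,\abs{x}$ by a chain: convergence for $h(x)=x$ comes from Theorem~\ref{thm:srs-convergence}, one application of Theorem~\ref{thm:dominated-convergence} transfers it to $\abs{x}$, linearity gives $b\,\abs{x}$ and then $a+b\,\abs{x}$, and a second application of Theorem~\ref{thm:dominated-convergence} yields the result for $f$. You instead take the single majorant $h(x)=\sqrt{2a^2+2b^2x^2}$, whose square is a polynomial in the operator, so that $\|h(\tilde{\mat{A}}_n)\,\mat{v}\|^2=2a^2\|\mat{v}\|^2+2b^2\|\tilde{\mat{A}}_n\,\mat{v}\|^2$ converges by inspection; Lemma~\ref{lemma:convergence-of-norm} then upgrades the norm convergence to vector convergence, and Theorem~\ref{thm:dominated-convergence} is invoked only once. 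This in effect inlines the work that the paper's chained applications perform implicitly (the paper's proof of Theorem~\ref{thm:dominated-convergence} itself routes through Lemma~\ref{lemma:convergence-of-norm}). You are also more explicit than the paper about the zero-padding identification of $f(\mat{A}_n)\,\mat{v}_n$ with $f(\tilde{\mat{A}}_n)\,\mat{v}$ for $n$ beyond the support of $\mat{v}$, which the paper leaves tacit. Both arguments are valid; yours is marginally more self-contained, at the cost of a slightly less obvious choice of majorant.
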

\begin{proof}
  By Theorem~\ref{thm:srs-convergence}, 
  convergence holds for function $h(x) = x$, and
  by Theorem~\ref{thm:dominated-convergence}, 
  for function $h(x)=\abs{x}$. Convergence then holds
  also for function $h(x) = b\,\abs{x}$, for $h(x) = a+b\,\abs{x}$,
  and again by Theorem~\ref{thm:dominated-convergence}, for $f(x)$.
\end{proof}
We can apply this theorem to a product of two linearly bounded
functions.
\begin{theorem}
  \label{thm:product-convergence}
  Let self-adjoint operators $\operator{A}$ and $\operator{B}$ have
  infinite matrix representations $\mat{A}_\infty$ and $\mat{B}_\infty$.
  Furthermore, let vectors $\mat{u},\mat{v}\in\ell^2_0$ and 
  finite approximations have the elements
  \begin{equation*}
    \begin{array}{cccc}
      [\mat{A}_n]_{i,j}=[\mat{A}_\infty]_{i,j},&
      [\mat{B}_n]_{i,j}=[\mat{B}_\infty]_{i,j},&
      [\mat{u}_n]_i  = [\mat{u}]_i,&
      [\mat{v}_n]_i = [\mat{v}]_i
    \end{array}
  \end{equation*}
  for $i,j=0,1,\ldots,n$.
  Then for linearly bounded $f_1,f_2$ that are Riemann--Stieltjes
  integrable with respect to $\operator{A}$ and $\operator{B}$ respectively,
      as $n\rightarrow\infty$, we have
  \begin{equation*}
      \mat{u}_n^*\,
      f_1\left(
      \mat{A}_n
      \right)\,
      f_2\left(
      \mat{B}_n
      \right)\,
      \mat{v}_n
      \rightarrow
      \mat{u}^*\,f_1(\mat{A}_\infty)\,f_2(\mat{B}_\infty)\,\mat{v}.
  \end{equation*}
\end{theorem}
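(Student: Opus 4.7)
The plan is to reduce the statement to two independent applications of Theorem~\ref{thm:linearly-bounded-convergence}, one per factor, and then to use continuity of the inner product. The key observation is that $f_1$ is real-valued, so $f_1(\mat{A}_n)$ and $f_1(\mat{A}_\infty)$ are self-adjoint on $\ell^2$ (as real functions of self-adjoint operators). Consequently, the row-vector action can be transposed onto the right-hand argument:
\begin{equation*}
  \mat{u}_n^*\,f_1(\mat{A}_n)\,f_2(\mat{B}_n)\,\mat{v}_n
  =
  \langle f_1(\mat{A}_n)\,\mat{u}_n,\,f_2(\mat{B}_n)\,\mat{v}_n\rangle.
\end{equation*}
This rewriting is legitimate at every finite stage because $f_1(\mat{A}_n)$ acts on the padded block as an ordinary finite matrix operation, and it is also legitimate in the limit because $\mat{u}\in\ell^2_0$ lies in $\vectorspace{D}(f_1(\mat{A}_\infty))$ (the estimate $\|f_1(\mat{A}_\infty)\,\mat{u}\|^2\leq 2a^2\|\mat{u}\|^2+2b^2\|\mat{A}_\infty\,\mat{u}\|^2$, combined with \eqref{eq:infinite-matrix-condition}, makes $f_1(\mat{A}_\infty)\,\mat{u}\in\ell^2$).

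Next, I would apply Theorem~\ref{thm:linearly-bounded-convergence} twice: once to the triple $(\operator{A},f_1,\mat{u})$ and once to $(\operator{B},f_2,\mat{v})$. Both triples satisfy the hypotheses of that theorem because $\mat{u},\mat{v}\in\ell^2_0$ and $f_1,f_2$ are linearly bounded and Riemann--Stieltjes integrable with respect to the corresponding operator. This yields strong $\ell^2$ convergence
\begin{align*}
  f_1(\mat{A}_n)\,\mat{u}_n &\to f_1(\mat{A}_\infty)\,\mat{u},\\
  f_2(\mat{B}_n)\,\mat{v}_n &\to f_2(\mat{B}_\infty)\,\mat{v}.
\end{align*}

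With both sequences converging in norm, the joint continuity of the $\ell^2$ inner product gives
\begin{equation*}
  \langle f_1(\mat{A}_n)\,\mat{u}_n,\,f_2(\mat{B}_n)\,\mat{v}_n\rangle
  \to
  \langle f_1(\mat{A}_\infty)\,\mat{u},\,f_2(\mat{B}_\infty)\,\mat{v}\rangle,
\end{equation*}
and reversing the self-adjoint move in the limit (again using $\mat{u}\in\vectorspace{D}(f_1(\mat{A}_\infty))$ and self-adjointness of $f_1(\mat{A}_\infty)$) identifies the target as $\mat{u}^*\,f_1(\mat{A}_\infty)\,f_2(\mat{B}_\infty)\,\mat{v}$.

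The main obstacle worth flagging is interpretational rather than technical: the composite operator $f_1(\mat{A}_\infty)\,f_2(\mat{B}_\infty)$ need not be defined on $\mat{v}$, since there is no reason for $f_2(\mat{B}_\infty)\,\mat{v}$ to lie in $\vectorspace{D}(f_1(\mat{A}_\infty))$. The right-hand side must therefore be read as the sesquilinear form $\langle f_1(\mat{A}_\infty)\,\mat{u},\,f_2(\mat{B}_\infty)\,\mat{v}\rangle$, which is always finite under the stated hypotheses. Once this reading is fixed, the argument is a straightforward composition of two applications of Theorem~\ref{thm:linearly-bounded-convergence} with the continuity of the inner product.
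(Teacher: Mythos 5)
Your proposal is correct and follows essentially the same route as the paper: the paper's proof likewise applies Theorem~\ref{thm:linearly-bounded-convergence} to each factor and then invokes the joint continuity of the inner product, $\mat{x}_n\rightarrow\mat{x}$, $\mat{y}_n\rightarrow\mat{y}$ $\Rightarrow$ $\langle\mat{x}_n,\mat{y}_n\rangle\rightarrow\langle\mat{x},\mat{y}\rangle$. Your additional remarks on the self-adjointness of $f_1(\mat{A}_\infty)$ and on reading the limit as a sesquilinear form are sound elaborations of details the paper leaves implicit.
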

\begin{proof}
  This follows from 
  Theorem~\ref{thm:linearly-bounded-convergence} and
  the property that in a Hilbert space, if
  $\mat{x}_n\rightarrow \mat{x}$ and $\mat{y}_n\rightarrow \mat{y}$ then
  $\langle \mat{x}_n,\mat{y}_n\rangle \rightarrow \langle \mat{x},\mat{y}\rangle$
  \cite[p.~199]{Riesz+Nagy:1955} or \cite[Lemma~3.2-2]{Kreyszig:1989}.
\end{proof}

We can now apply this theorem to quadratically bounded functions.
\begin{theorem}
  \label{thm:quadratic-convergence}
  Let self-adjoint operator $\operator{A}$ have
  infinite matrix representation $\mat{A}_\infty$.
  Let vectors $\mat{u},\mat{v}\in\ell^2_0$, and 
  let finite approximations have the
  elements
  \begin{equation*}
    \begin{array}{ccc}
    [\mat{A}_n]_{i,j}=[\mat{A}_\infty]_{i,j},&
    [\mat{u}_n]_i  = [\mat{u}]_i,&
    [\mat{v}_n]_i = [\mat{v}]_i
    \end{array}
  \end{equation*}
  for $i,j=0,1,\ldots,n$.
  Let function $f$ be Riemann--Stieltjes integrable
  with respect to $\operator{A}$
  and quadratically bounded, that is,
  for all $x\in\mathbb{R}$,
    $\abs{f(x)}\leq a + b\,x^2$, for some positive $a$ and $b$. Then
  $
      \mat{u}_n^*\,
      f\left(
      \mat{A}_n
      \right)
      \,
      \mat{v}_n
      \rightarrow
      \mat{u}^*\,f(\mat{A}_\infty)\,\mat{v}
      $
      as $n\rightarrow\infty$.
\end{theorem}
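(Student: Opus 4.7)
The plan is to factor the quadratically bounded $f$ as a product of two linearly bounded functions and then apply Theorem~\ref{thm:product-convergence} with both operators chosen equal to $\operator{A}$. Set $h(x) = \sqrt{a + b\,x^2}$, which is everywhere strictly positive and continuous since $a,b>0$, and write $f = f_1\cdot f_2$ with $f_1(x) = h(x)$ and $f_2(x) = f(x)/h(x)$. Both factors are linearly bounded: $f_1$ because $\sqrt{a+b\,x^2}\leq \sqrt{a}+\sqrt{b}\,|x|$, and $f_2$ because $|f(x)|\leq h(x)^2$ forces $|f_2(x)|\leq 1$. They are both Riemann--Stieltjes integrable with respect to $\operator{A}$ as well: $f_1$ is continuous on $\mathbb{R}$, and the discontinuities of $f_2$ lie inside those of $f$, since dividing by a strictly positive continuous function introduces no new ones.

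With this factorization at hand, Theorem~\ref{thm:product-convergence} applied with $\operator{B}=\operator{A}$, $\mat{B}_n=\mat{A}_n$, and the same $\mat{u},\mat{v}$ on both sides yields
\begin{equation*}
\mat{u}_n^*\,f_1(\mat{A}_n)\,f_2(\mat{A}_n)\,\mat{v}_n
\longrightarrow
\mat{u}^*\,f_1(\mat{A}_\infty)\,f_2(\mat{A}_\infty)\,\mat{v}.
\end{equation*}
The task then reduces to recognizing each side as the desired quadratic form of $f$. On the finite-dimensional side this is immediate: the functional calculus of a self-adjoint matrix sends pointwise products to operator products, so $f_1(\mat{A}_n)\,f_2(\mat{A}_n) = (f_1\cdot f_2)(\mat{A}_n) = f(\mat{A}_n)$. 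On the infinite-dimensional side one reads the limit spectrally as $\int f_1(t)\,f_2(t)\,d\langle\mat{u},\operator{E}(t)\,\mat{v}\rangle = \int f(t)\,d\langle\mat{u},\operator{E}(t)\,\mat{v}\rangle$, which is the natural meaning of $\mat{u}^*\,f(\mat{A}_\infty)\,\mat{v}$.

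The main obstacle I expect is this last identification on the infinite-dimensional side. Vectors $\mat{v}\in\ell^2_0$ need not lie in the operator domain $\vectorspace{D}(f(\mat{A}_\infty))$ (which, for quadratically bounded $f$, requires $\mat{v}\in\vectorspace{D}(\mat{A}_\infty^2)$), but the quadratic bound does place $\mat{v}$ in the form domain $\{\phi : \int(a+b\,t^2)\,d\|\operator{E}(t)\,\phi\|^2<\infty\} = \vectorspace{D}(\mat{A}_\infty)$, which automatically contains $\ell^2_0$ since it is a core. One therefore has to interpret $\mat{u}^*\,f(\mat{A}_\infty)\,\mat{v}$ through the sesquilinear form $\int f(t)\,d\langle\mat{u},\operator{E}(t)\,\mat{v}\rangle$ and verify that the product $f_1(\operator{A})\,f_2(\operator{A})$ collapses to this form on $\ell^2_0$. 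Once this form-versus-operator bookkeeping is in place, the conclusion follows directly from an already-proved result.
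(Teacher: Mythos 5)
Your proof is correct and follows essentially the same route as the paper: the paper likewise reduces the statement to Theorem~\ref{thm:product-convergence} with $\operator{B}=\operator{A}$, only using the factorization $f_1=\operatorname{sgn}(f)\,\sqrt{|f|}$, $f_2=\sqrt{|f|}$ in place of your $h\cdot(f/h)$ with $h(x)=\sqrt{a+b\,x^2}$. Your closing observation that $\mat{u}^*\,f(\mat{A}_\infty)\,\mat{v}$ must be read as the sesquilinear form $\langle f_1(\mat{A}_\infty)\,\mat{u},\,f_2(\mat{A}_\infty)\,\mat{v}\rangle$ rather than as an operator composition is precisely the interpretation implicit in the paper's application of that theorem.
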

\begin{proof}
  This follows from Theorem~\ref{thm:product-convergence} by selecting
  $\operator{A}=\operator{B}$,
  $f_1=\operatorname{sgn}(f)\,\sqrt{\abs{f}}$ and $f_2=\sqrt{\abs{f}}$.
  The product function $f_1\,f_2$ is Riemann--Stieltjes
  integrable by \cite[Theorem~6.11 and 6.13]{Rudin:1976}.
\end{proof}

For the convergence of the numerical approximation of integrals
as in \eqref{eq:numint-def} and \eqref{eq:product-convergence}, we
then have the following theorem.
\begin{theorem}
  \label{thm:quadratic-integral-convergence}
  Let orthonormal functions $\phi_0=1,\phi_1,\phi_2,\ldots$ be dense in
  $\vectorspace{L}^2_w(\Omega)$.
  Let function
  $g$ satisfy \eqref{eq:mult-mat} and 
  let matrices $\mat{M}_n[g]$ have
  elements as in \eqref{eq:matrix-elements}.
  Further, let quadratically bounded function $f$ be
  Riemann--Stieltjes integrable with respect to $\operator{M}[g]$.
  Then
  \begin{equation*}
    \lim_{n\rightarrow\infty} [f(\mat{M}_n[g])]_{0,0}=
    \int_{\Omega} f(g(\mat{x}))\,w(\mat{x})\,d\mat{x}.
  \end{equation*}
  Similarly, for functions $g_1$ and $g_2$ satisfying \eqref{eq:mult-mat}
  and linearly bounded $f_1$ and $f_2$, we have
  \begin{equation*}
    \lim_{n\rightarrow\infty} [f_1(\mat{M}_n[g_1])\,f_2(\mat{M}_n[g_2])]_{0,0}=
    \int_{\Omega} f_1(g_1(\mat{x}))\,f_2(g_2(\mat{x}))\,w(\mat{x})\,d\mat{x}.
  \end{equation*}
\end{theorem}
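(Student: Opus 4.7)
The plan is to deduce both convergences by specializing Theorem~\ref{thm:quadratic-convergence} and Theorem~\ref{thm:product-convergence} to the multiplication operator, evaluated at the basis vector $\mat{e}_0$. All the analytic work has been done upstream; what remains is the passage from $\operator{M}[g]$ on $\vectorspace{L}^2_w(\Omega)$ to its infinite matrix representation $\mat{M}_\infty[g]$ on $\ell^2$, and the identification of the resulting scalar with the target integral.

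First I would note that condition \eqref{eq:mult-mat} is exactly \eqref{eq:infinite-matrix-condition} for $\operator{M}[g]$ in the orthonormal basis $\{\phi_i\}$, so as recorded after \eqref{eq:infinite-matrix-condition}, $\operator{M}[g]$ has an infinite matrix representation $\mat{M}_\infty[g]$ which is a self-adjoint operator on $\ell^2$, unitarily equivalent to $\operator{M}[g]$ via the isometry $\phi_i\mapsto\mat{e}_i$. Under this identification, $\phi_0=1$ is sent to $\mat{e}_0\in\ell^2_0$ and the spectral families are conjugate, so Riemann--Stieltjes integrability of $f$ with respect to $\operator{M}[g]$ is equivalent to the same statement for $\mat{M}_\infty[g]$; the same applies to $f_1,f_2$ and $g_1,g_2$.

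Having set this up, for the first assertion I apply Theorem~\ref{thm:quadratic-convergence} with $\operator{A}=\mat{M}_\infty[g]$ and $\mat{u}=\mat{v}=\mat{e}_0\in\ell^2_0$. The truncations $\mat{u}_n=\mat{v}_n$ are then the first standard basis vector of $\mathbb{C}^{n+1}$, so $\mat{u}_n^*\,f(\mat{M}_n[g])\,\mat{v}_n=[f(\mat{M}_n[g])]_{0,0}$, while the limit $\mat{e}_0^*\,f(\mat{M}_\infty[g])\,\mat{e}_0=\langle 1,f(\operator{M}[g])\,1\rangle$ is by the spectral theorem and \eqref{eq:numint-def} exactly $\int_\Omega f(g(\mat{x}))\,w(\mat{x})\,d\mat{x}$. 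For the product case, the same specialization of Theorem~\ref{thm:product-convergence} with $\operator{A}=\mat{M}_\infty[g_1]$, $\operator{B}=\mat{M}_\infty[g_2]$, $\mat{u}=\mat{v}=\mat{e}_0$ yields the second limit after the analogous identification.

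The main obstacle is essentially bookkeeping: the genuinely nontrivial work (srs-convergence of the padded truncations from Theorem~\ref{thm:srs-convergence}, the norm-convergence Lemma~\ref{lemma:convergence-of-norm}, and the factorization $f=\operatorname{sgn}(f)\sqrt{|f|}\cdot\sqrt{|f|}$ used inside Theorem~\ref{thm:quadratic-convergence} to reduce quadratic growth to a product of linearly bounded factors) is already in place. The only subtlety worth checking is that $\mat{e}_0$ really lies in the domain of the dominating $h(\mat{M}_\infty[g])$ invoked inside those theorems, but since $\mat{e}_0\in\ell^2_0$ and $\mat{M}_\infty[g]$ arises from \eqref{eq:mult-mat}, this is immediate from the definition of $\ell^2_0$ and the self-adjointness of the infinite matrix; no new estimates are needed.
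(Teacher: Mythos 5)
Your proposal is correct and follows the paper's own route exactly: the paper likewise derives both limits directly from Theorem~\ref{thm:quadratic-convergence} and Theorem~\ref{thm:product-convergence}, with your additional remarks on the unitary identification of $\operator{M}[g]$ with $\mat{M}_\infty[g]$ and the choice $\mat{u}=\mat{v}=\mat{e}_0\in\ell^2_0$ simply making explicit the bookkeeping the paper leaves implicit.
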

\begin{proof}
The result follows from Theorems 
\ref{thm:product-convergence} and \ref{thm:quadratic-convergence}.
\end{proof}
We can extend this result to polynomially bounded functions in the
special case that the inside function $g$ and the basis
functions are polynomials.
We define the total degree of a multivariate monomial
$\prod_{i=0}^d x_i^{n_i}$ as $\sum_{i=0}^d n_i$. Then for a polynomial,
that is, a finite linear combination of monomials, its total degree is the
total degree of the highest monomial.
If polynomials are dense in a Hilbert space, they can
be partially ordered according to the total degree of the polynomials, 
for example, in graded lexicographic order 
\cite[Chapter~3.1]{Dunkl+Xu:2014}.
\begin{theorem}
  Let the orthonormal polynomials $\phi_0,\phi_1,\phi_2,\ldots$ be partially
  ordered by their total degree and dense in
  $\vectorspace{L}^2_w(\Omega)$.
  Let the function $g$ be polynomial and 
  let the matrices $\mat{M}_n[g]$ have
  elements as in \eqref{eq:matrix-elements}. Let a
  function $f$ be polynomially bounded
  that is,
  $f(x)\leq a + b\,\abs{x}^m$ for some positive $a,b$ and $m\in\mathbb{N}$, then
  \begin{equation*}
    \lim_{n\rightarrow\infty} [f(\mat{M}_n[g])]_{0,0}=
    \int_{\Omega} f(g(\mat{x}))\,w(\mat{x})\,d\mat{x},
  \end{equation*}
  if $f$ is  Riemann--Stieltjes integrable with respect to $\operator{M}[g]$.
\end{theorem}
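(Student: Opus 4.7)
The plan is to reduce the polynomially bounded case to a polynomial majorant via Theorem~\ref{thm:dominated-convergence}, and to handle the majorant by exploiting the graded structure of the polynomial basis to obtain eventual exactness of the truncated matrix computation. First, since $\abs{f(x)}\leq a+b\,\abs{x}^m$ and $\abs{x}^m\leq 1+x^{2m'}$ for any integer $m'$ with $2m'\geq m$, I would take the even-degree polynomial $h(x)=(a+b)+b\,x^{2m'}$, which is continuous, Riemann--Stieltjes integrable with respect to $\operator{M}[g]$, and satisfies $\abs{f}\leq h$ pointwise. The composition $h\circ g$ is a polynomial, and any polynomial lies in $\vectorspace{L}^2_w(\Omega)$ since the orthonormal polynomial basis must itself consist of $\vectorspace{L}^2_w(\Omega)$ elements, so all moments of $w$ are finite. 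Hence $\mat{e}_0\in\vectorspace{D}(h(\operator{M}[g]))$.

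Next, I would establish the key lemma that for every $k\in\mathbb{N}$ there is a threshold $N_k$ such that $\mat{M}_n[g]^k\,\mat{e}_0=\mat{M}_\infty[g]^k\,\mat{e}_0$ (with the finite vector embedded into $\ell^2$ by zero padding) whenever $n\geq N_k$. The key observation is that because $\phi_0,\phi_1,\ldots$ are ordered by total degree and $g$ is a polynomial of some fixed total degree $d$, multiplication by $g$ raises the total degree of any polynomial by at most $d$. By induction on $k$, $\mat{M}_\infty[g]^k\,\mat{e}_0$ is the Fourier coefficient vector of the polynomial $g^k$, which has total degree at most $kd$ and is therefore supported on only finitely many basis indices, say on the first $N_k+1$ of them. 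For $n\geq N_k$, all the entries of $\mat{M}_\infty[g]$ encountered in the iterated matrix-vector products defining $\mat{M}_n[g]^k\,\mat{e}_0$ lie in the leading $(n+1)\times(n+1)$ block and are preserved under truncation, giving the claimed equality. Applied to the degree-$2m'$ polynomial $h$, this yields $h(\mat{M}_n[g])\,\mat{e}_0\to h(\mat{M}_\infty[g])\,\mat{e}_0$ in $\ell^2$; indeed the equality holds for all sufficiently large $n$.

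Finally, I would invoke Theorem~\ref{thm:dominated-convergence} with $\operator{A}_n$ given by the block-diagonal embeddings of $\mat{M}_n[g]$ (for which Theorem~\ref{thm:srs-convergence} supplies the required strong resolvent convergence), $\operator{A}=\mat{M}_\infty[g]$, $\phi=\mat{e}_0$, and dominating function $h$. All hypotheses are met: $f$ and $h$ are both Riemann--Stieltjes integrable with respect to $\operator{M}[g]$, continuous at its eigenvalues ($f$ by the standing assumption and the fact that Riemann--Stieltjes integrability forces continuity at eigenvalues, $h$ because polynomials are continuous everywhere), and $\abs{f}\leq h$. The theorem then yields $f(\mat{M}_n[g])\,\mat{e}_0\to f(\mat{M}_\infty[g])\,\mat{e}_0$, and taking the inner product with $\mat{e}_0$ gives $[f(\mat{M}_n[g])]_{0,0}\to\langle 1,f(\operator{M}[g])\,1\rangle=\int_\Omega f(g(\mat{x}))\,w(\mat{x})\,d\mat{x}$.

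The main obstacle is the combinatorial bookkeeping needed to justify the eventual-exactness step in the multivariate setting, where $\mat{M}_\infty[g]$ is not banded in the usual index sense but only in the graded sense induced by total degree. One must verify that the partial order by total degree, together with the fixed degree $d$ of $g$, stabilizes the support of $\mat{M}_\infty[g]^k\,\mat{e}_0$ at a finite index, and that this support is preserved by further applications of the truncated matrix $\mat{M}_n[g]$ rather than being corrupted by the truncation. Once this stabilization property is in hand, the reduction via Theorem~\ref{thm:dominated-convergence} is essentially mechanical.
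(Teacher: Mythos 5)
Your proposal is correct and follows essentially the same route as the paper's proof: both arguments rest on the eventual exactness of the truncated matrix powers applied to $\mat{e}_0$ (a consequence of the graded-by-total-degree structure of the polynomial basis and the fixed degree of $g$), followed by an appeal to Theorem~\ref{thm:dominated-convergence} with a polynomial majorant. Your choice of the even majorant $h(x)=(a+b)+b\,x^{2m'}$ is a minor (and slightly cleaner) variant of the paper's implicit use of $a+b\,\abs{x}^m$, since it avoids dealing with $\abs{x}^m$ for odd $m$, but it does not change the substance of the argument.
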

\begin{proof}
  Because polynomials are dense in   $\vectorspace{L}^2_w(\Omega)$, the function
  $a+b\,\abs{g(\mat{x})}^m$ is integrable,
  and $g$ satisfies \eqref{eq:mult-mat}.
  Let the total degree of polynomial $g$ be $k$.
  Let the projection operator $\operator{P}_n$ be defined by
   $ \operator{P}_n\,\psi=\sum_{i=0}^n\langle \phi_i,\psi\rangle\,\phi_i$.
  For a polynomial $\phi_j$ of total degree $p$, we can
  select $n$ so that the linear combination of
  $\{\phi_i\}_{i=0}^n$ covers all polynomials of
  total degree $k\,m + p$ and then we have the following equalities
  \begin{align*}
    g^m\,\phi_j = \operator{M}[g]^m\,\phi_j
    &=
    \left(
    \operator{P}_n\,\operator{M}[g]\,\operator{P}_n
    \right)^m\,\phi_j\\
    \mat{M}_\infty[g]^m\,\mat{e}_j
    &=
    \left[
      \begin{array}{cc}
        \mat{M}_n[g]             & \mat{0}_{(n+1)\times\infty} \\
        \mat{0}_{\infty\times (n+1)} & \mat{0}_\infty      
      \end{array}
      \right]^m\,\mat{e}_j.
  \end{align*}
  By Theorem~\ref{thm:dominated-convergence}, we have
  \begin{equation*}
    \left[
      \begin{array}{cc}
        f(\mat{M}_n[g])             & \mat{0}_{(n+1)\times\infty} \\
        \mat{0}_{\infty\times (n+1)} & \mat{0}_\infty      
      \end{array}
      \right]\,\mat{e}_j
    \rightarrow
      f(\mat{M}_\infty[g])\,\mat{e}_j,
  \end{equation*}
  from where it follows that
  \begin{equation*}
  [f(\mat{M}_n[g])]_{0,0}=\mat{e}_0^\top\,f(\mat{M}_n[g])\,\mat{e}_0\rightarrow
  \mat{e}_0^\top\,f(\mat{M}_\infty[g])\,\mat{e}_o=
  \int_\Omega f(g(\mat{x}))\,w(\mat{x})\,d\mat{x}.
  \end{equation*}
\end{proof}
\begin{remark}
  The same arguments hold also more generally for the convergence of
  $\mat{e}_i^\top\,f(\mat{A}_n)\,\mat{e}_j\rightarrow \mat{e}_i^\top\,f(\mat{A}_\infty)\,\mat{e}_j$
  when $\mat{A}_\infty$ is a self-adjoint infinite sparse matrix, that is,
  an infinite matrix with only a finite number of non-zero components in each
  column and row.
\end{remark}
\begin{remark}
  In the special case when the inside function is
  $g(x)=\operatorname{id}(x)=x$, we have essentially the same proof as 
  in \cite[Sections~1.8~and~1.13]{Simon:2009} for the convergence of
  Gaussian quadrature on 
  polynomially bounded functions.
\end{remark}

We can use the following lemma to prove convergence beyond polynomially bounded
functions.
\begin{lemma}
  \label{lemma:bounded-convergence}
  Let self-adjoint operators $\operator{A}_n$ converge to a self-adjoint
  operator $\operator{A}$ in the strong resolvent sense. Let 
  a function 
  $f:~\mathbb{R}\mapsto [0,\infty)$ and a vector $\phi$ be such that
  $\langle \phi,f(\operator{A})\,\phi\rangle<\infty$.
  If 
  $\langle \phi, f(\operator{A}_n)\,\phi\rangle$ is bounded by   
  $\langle \phi,f(\operator{A})\,\phi\rangle$, then  
  $\langle \phi, f(\operator{A}_n)\,\phi\rangle \rightarrow 
  \langle \phi, f(\operator{A})\,\phi\rangle$ as $n\rightarrow\infty$.      
\end{lemma}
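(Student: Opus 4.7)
The plan is to carry out a monotone truncation argument that sandwiches $\langle\phi,f(\operator{A}_n)\,\phi\rangle$ between a liminf bounded below by a truncated quantity and a limsup bounded above by the hypothesis.

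First, for each $M>0$ I introduce the truncation $f_M(x)=\min(f(x),M)$. Since $f\ge 0$, this is bounded with $0\le f_M\le f$ and $f_M\le M$. Under the paper's working assumption that $f$ is Riemann--Stieltjes integrable with respect to $\operator{A}$, so is $f_M$ (the operation $x\mapsto\min(x,M)$ is continuous and nondecreasing, so the set of discontinuities of $f_M$ is contained in that of $f$). Moreover, $f_M$ is continuous at every eigenvalue of $\operator{A}$ at which $f$ is. Hence either Theorem~\ref{thm:bounded-function-srs} (if $f$ is continuous) or Theorem~\ref{thm:Darboux-convergence} gives
\[
\langle\phi,f_M(\operator{A}_n)\,\phi\rangle \longrightarrow \langle\phi,f_M(\operator{A})\,\phi\rangle
\quad\text{as } n\to\infty.
\]

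Second, from $0\le f-f_M$ I get, via the spectral calculus applied to each $\operator{A}_n$, the operator inequality $f_M(\operator{A}_n)\qfleq f(\operator{A}_n)$, hence $\langle\phi,f_M(\operator{A}_n)\,\phi\rangle\le\langle\phi,f(\operator{A}_n)\,\phi\rangle$; combined with the standing hypothesis this yields
\[
\langle\phi,f_M(\operator{A}_n)\,\phi\rangle
\;\le\;\langle\phi,f(\operator{A}_n)\,\phi\rangle
\;\le\;\langle\phi,f(\operator{A})\,\phi\rangle.
\]
Taking $\liminf$ and $\limsup$ in $n$ and using the first step,
\[
\langle\phi,f_M(\operator{A})\,\phi\rangle
\;\le\;\liminf_{n\to\infty}\langle\phi,f(\operator{A}_n)\,\phi\rangle
\;\le\;\limsup_{n\to\infty}\langle\phi,f(\operator{A}_n)\,\phi\rangle
\;\le\;\langle\phi,f(\operator{A})\,\phi\rangle.
\]

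Third, I let $M\to\infty$. Since $f_M\uparrow f$ pointwise and the scalar measure $\rho(t)=\|\operator{E}(t)\,\phi\|^2$ is a finite positive measure of total mass $\|\phi\|^2$, the classical monotone convergence theorem yields
\[
\langle\phi,f_M(\operator{A})\,\phi\rangle
=\int_{\sigma(\operator{A})}f_M(t)\,d\|\operator{E}(t)\,\phi\|^2
\;\longrightarrow\;
\int_{\sigma(\operator{A})}f(t)\,d\|\operator{E}(t)\,\phi\|^2
=\langle\phi,f(\operator{A})\,\phi\rangle,
\]
where the right side is finite by the hypothesis $\langle\phi,f(\operator{A})\,\phi\rangle<\infty$. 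The sandwich then pinches both $\liminf$ and $\limsup$ to the common value $\langle\phi,f(\operator{A})\,\phi\rangle$, giving the claim.

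The main technical obstacle is ensuring that the truncated function $f_M$ belongs to the class for which the convergence theorems from Section~\ref{sec:convergence-for-discontinuous-functions} apply; this requires that any new ``kink'' points of $f_M$ (where $f(x)=M$) still avoid being eigenvalues of $\operator{A}$ and that Riemann--Stieltjes integrability passes to $f_M$. A minor subtlety is also the passage from the Darboux--Stieltjes framework used throughout the paper to genuine monotone convergence; this is handled by the fact that a nonnegative Darboux--Stieltjes integrable limit coincides with the Lebesgue--Stieltjes integral with respect to the same nondecreasing $\rho$, so MCT applies directly. If a problematic level $M$ arises, one may replace it by a slightly perturbed $M'$ taken from a cofinal sequence of regular levels, which does not affect the monotone convergence step.
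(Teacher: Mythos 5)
Your proof is correct, but it takes a genuinely different route from the paper's. You truncate the \emph{range} of $f$, setting $f_M=\min(f,M)$, prove convergence for the bounded truncation, use the a priori bound $\langle\phi,f(\operator{A}_n)\,\phi\rangle\leq\langle\phi,f(\operator{A})\,\phi\rangle$ to close the sandwich from above, and finish with monotone convergence as $M\to\infty$. The paper instead truncates the \emph{domain}: it picks a finite interval $I$ with non-eigenvalue endpoints capturing all but $\epsilon/2$ of the mass of $f\,d\|\operator{E}(t)\,\phi\|^2$, uses convergence of the measures to get $\int_I f\,d\langle\phi,\operator{E}_k(t)\,\phi\rangle\to\int_I f\,d\langle\phi,\operator{E}(t)\,\phi\rangle$, and then exploits nonnegativity of $f$ together with the same a priori upper bound to squeeze $\langle\phi,f(\operator{A}_k)\,\phi\rangle$ between $\int_I f\,d\langle\phi,\operator{E}_k(t)\,\phi\rangle$ and $\langle\phi,f(\operator{A})\,\phi\rangle$. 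Both arguments are one-sided sandwiches that use the hypothesis to control the discarded part, and both tacitly need the same regularity of $f$ (Riemann--Stieltjes integrability on finite intervals and continuity at eigenvalues) that the lemma statement omits, so you are not assuming more than the paper does. The paper's domain truncation is slightly leaner: it only needs weak convergence of the spectral measures on a compact interval where $f$ is automatically bounded, whereas your route needs the full Theorem~\ref{thm:Darboux-convergence} for the bounded $f_M$ over the whole (possibly unbounded) spectrum plus an appeal to the monotone convergence theorem, which requires the identification of the Darboux--Stieltjes spectral integral with the Lebesgue--Stieltjes one that you correctly flag. Two small remarks: $\min(f,M)$ is continuous wherever $f$ is, so no new discontinuities arise at the level set $\{f=M\}$ and your closing caveat about perturbing $M$ is unnecessary; and your intermediate inequality is most safely read as the scalar inequality $\langle\phi,f_M(\operator{A}_n)\,\phi\rangle\leq\langle\phi,f(\operator{A}_n)\,\phi\rangle$ coming directly from the spectral representation rather than as an operator inequality between a bounded and a possibly unbounded operator.
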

\begin{proof}
Let the spectral families of $\operator{A}_n$ and $\operator{A}$ be 
$\operator{E}_n(t)$ and $\operator{E}(t)$, respectively.
Because $f$ is nonnegative, for an $\epsilon > 0$, we can select a finite 
interval $I$ with endpoints that are not eigenvalues of $\operator{A}$, so that
\begin{equation*}
  \langle \phi, f(\operator{A})\,\phi\rangle - \frac{\epsilon}{2} <
  \int_I f(t)\,d\langle \phi,\operator{E}(t)\,\phi\rangle \leq
  \langle \phi, f(\operator{A})\,\phi\rangle.
\end{equation*}
Due to the strong resolvent convergence, there is 
$N\in\mathbb{N}$, so that for all $k>N$, we have
\begin{equation*}
  \left\lvert
  \int_I f(t)\,d\langle \phi, \operator{E}(t)\,\phi\rangle -
  \int_I f(t)\,d\langle \phi, \operator{E}_k(t)\,\phi \rangle
  \right\rvert < \frac{\epsilon}{2}.
\end{equation*}
Because $f$ is nonnegative, these inequalities mean that for all $k>N$, 
we have
\begin{equation*}
  \langle \phi, f(\operator{A})\,\phi\rangle - \epsilon
  <
  \int_I f(t)\,d\langle \phi, \operator{E}_k(t)\,\phi\rangle
  \leq
  \langle \phi, f(\operator{A}_k)\,\phi \rangle
  \leq
  \langle \phi, f(\operator{A})\,\phi \rangle.
\end{equation*}
\end{proof}

In \cite[Chapter~IV, Section~8--10]{Shohat+Tamarkin:1963} and
\cite[Chapter~3, Theorem~1.6]{Freud:1971},
the proof of convergence for
  Gaussian quadratures is based on the inequality
  \begin{equation}
    \label{eq:gauss-inequality}
    \sum_{j=0}^n w_j\,h(x_j) \leq \int_\Omega h(x)\,w(x)\,dx,
  \end{equation}
  when all even derivatives of $h$ are nonnegative for all $x\in\Omega$,
  which implies
  \eqref{eq:wx_ineq} that was used in the proof of convergence in
  \cite{Uspensky:1928,Jouravsky:1928,Bultheel+Diaz-Mendoza+Gonzalez-Vera+Orive:2000}. This kind of
  inequality makes it possible to prove convergence for functions that are
  bounded by functions that can have negative odd derivatives.
  When the basis functions are polynomials,
  in terms of the infinite matrices and the leading principal submatrices,
  \eqref{eq:gauss-inequality} can be expressed as
  \begin{equation}
    \label{eq:matrix-gauss-inequality}
    [h(\mat{M}_n[\operatorname{id}])]_{0,0} \leq 
    [h(\mat{M}_\infty[\operatorname{id}])]_{0,0},
  \end{equation}
  where function $\operatorname{id}(x)=x$ but it does not hold for
  arbitrary inside functions $g\neq \operatorname{id}$ when the basis 
  functions are
  not polynomials or for
 other matrix elements except
  the $0,0$ element. Therefore, we can consider 
  functions with nonnegative even derivatives at zero as the highest possible 
  bound for which we have convergence
  if we can generalise \eqref{eq:matrix-gauss-inequality} for other
  elements than just $0,0$ element and arbitrary
  basis functions and inside function $g$. 
\subsection{Nonnegative matrix coefficients}
\label{sec:nonnegative-matrix-coefficients}
We can further expand the space of numerically integrable functions
when the matrix coefficients are all nonnegative. This can seem like a hard
restriction but, for example, the Jacobi matrix side diagonal elements
are always positive, only the diagonal elements can be negative
\cite[Theorem~1.27 and Definition~1.30]{Gautschi:2004}.
Examples of such cases are Jacobi matrices
for Chebyshev polynomials of the fourth kind and the Jacobi and the
Meixner--Pollaczek  polynomials
with parameter values on certain ranges \cite[Table~1.1]{Gautschi:2004}.
In some cases, it is also possible to extend the results for matrices with nonnegative coefficients
to matrices with positive and negative real coefficients by considering the positive
and negative parts separately.

We extend the concept of $m$-positive and $m$-monotone functions so that
matrix entries are allowed to be $\infty$, and the inequality for
a matrix element $a$ is defined so that $a\leq\infty$ for $a<\infty$,
but $\infty \leq \infty$ is not true.
\begin{definition}
  \label{def:unbounded-m-positive}
  Let $f:I\mapsto \mathbb{R}$ be a real function defined on an interval $I\subset \mathbb{R}$.
  \begin{enumerate}
  \item
    $f$ is unbounded $m$-positive if $0\in I$ and
    $f(\mat{A})\ewgeq \mat{0}_\infty$ for every
    symmetric infinite matrix $\mat{A}\ewgeq \mat{0}_\infty$
    with spectrum in $I$. 
    $\mat{A}$ and $f(\mat{A})$ are not required to satisfy 
    \eqref{eq:infinite-matrix-condition}.
    Elements of 
    $f(\mat{A})$ are allowed to be $\infty$.
  \item
    $f$ is unbounded $m$-monotone if
    $\mat{A}\ewleq \mat{B} \Rightarrow f(\mat{A})\ewleq f(\mat{B})$, for all
    symmetric infinite matrices
    $\mat{A},\mat{B}\ewgeq \mat{0}_\infty$ with spectra in $I$.
    $\mat{A}$, $\mat{B}$, $f(\mat{A})$, and $f(\mat{B})$ are not required
    to satisfy \eqref{eq:infinite-matrix-condition}.
    Elements of 
     $f(\mat{B})$ are allowed to be $\infty$.
  \end{enumerate}
\end{definition}
For example, monomials and nonnegative linear combinations of monomials
are unbounded $m$-positive functions.
We can extend Theorem~\ref{thm:m-positive-equal-m-monotone} to unbounded
$m$-positive and unbounded $m$-monotone functions.
\begin{theorem}
  \label{thm:operator-m}
  Let $f:I\mapsto \mathbb{R}$ be a real function defined on an interval
  $I\subset \mathbb{R}$ such that $0\in I$ and
  $f(\mat{0}_\infty)\ewgeq \mat{0}_\infty$. 
  Then $f$ is
  unbounded $m$-positive if and only if it is unbounded $m$-monotone.
\end{theorem}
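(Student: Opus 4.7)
The plan is to handle the two implications separately, reducing the harder one to the finite case of Theorem~\ref{thm:m-positive-equal-m-monotone} through a truncation argument. The reverse implication is essentially immediate: if $f$ is unbounded $m$-monotone, then for any symmetric infinite matrix $\mat{A}\ewgeq\mat{0}$ with spectrum in $I$ one has $\mat{0}\ewleq\mat{A}$, and the zero matrix trivially has spectrum $\{0\}\subset I$, so $f(\mat{0})\ewleq f(\mat{A})$; combining with the hypothesis $f(\mat{0})\ewgeq\mat{0}$ yields $f(\mat{A})\ewgeq\mat{0}$, proving unbounded $m$-positivity.

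For the forward direction I would first reduce to finite $m$-positivity. Any finite symmetric $(n+1)\times(n+1)$ matrix $\mat{A}\ewgeq\mat{0}$ with spectrum in $I$ embeds as the block-diagonal infinite matrix $\mathrm{diag}(\mat{A},\mat{0})$, which is symmetric, has nonnegative entries, and has spectrum $\sigma(\mat{A})\cup\{0\}\subset I$ (using $0\in I$). Unbounded $m$-positivity then forces $f(\mathrm{diag}(\mat{A},\mat{0}))=\mathrm{diag}(f(\mat{A}),f(\mat{0}))\ewgeq\mat{0}$, so in particular $f(\mat{A})\ewgeq\mat{0}$. Hence $f$ is finite $m$-positive, and Theorem~\ref{thm:m-positive-equal-m-monotone} delivers finite $m$-monotonicity.

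To lift the conclusion to infinite matrices, let $\mat{A}\ewleq\mat{B}$ be symmetric infinite matrices with nonnegative entries and spectra in $I$, and let $\mat{A}_n,\mat{B}_n$ denote their $(n+1)\times(n+1)$ leading principal submatrices. These are finite symmetric nonnegative with $\mat{A}_n\ewleq\mat{B}_n$, and their numerical ranges lie in the convex hull of the spectra of the full matrices, hence in $I$. Finite $m$-monotonicity therefore gives $[f(\mat{A}_n)]_{i,j}\leq[f(\mat{B}_n)]_{i,j}$ for all $i,j\leq n$. Applying finite $m$-monotonicity again to the zero-padded inclusion $\mathrm{diag}(\mat{A}_m,\mat{0})\ewleq\mat{A}_n$ (valid for $m\leq n$ since $\mat{A}$ has nonnegative entries) shows that $[f(\mat{A}_n)]_{i,j}$ is monotone nondecreasing in $n$ for each fixed $i,j$, so its limit, possibly $+\infty$, exists and is the natural value of $[f(\mat{A})]_{i,j}$; the same argument applies to $\mat{B}$, and passing to the limit in the inequality for truncations yields $[f(\mat{A})]_{i,j}\leq[f(\mat{B})]_{i,j}$.

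The hardest point is this final passage to the limit: Definition~\ref{def:unbounded-m-positive} permits entries of $f(\mat{A})$ to attain $+\infty$ but does not specify how $f$ is evaluated on an infinite matrix that fails condition~\eqref{eq:infinite-matrix-condition}. The monotone truncation limit supplies the canonical interpretation, and the proof essentially adopts it as the working definition of $f(\mat{A})$; any alternative meaning intended by the definition would have to be shown to coincide with this limit before the lifting step applies.
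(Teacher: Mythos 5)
Your argument is correct in substance, but for the nontrivial direction it takes a genuinely different route from the paper's. The paper does not pass through the finite-dimensional case at all: it re-runs Hansen's original argument directly on infinite matrices, forming the block matrix $\mat{X}$ with diagonal blocks $\mat{A}$ and $\mat{B}$, conjugating by the unitary $\mat{U}=\tfrac{1}{\sqrt{2}}\bigl[\begin{smallmatrix}\mat{I}_\infty & -\mat{I}_\infty\\ \mat{I}_\infty & \mat{I}_\infty\end{smallmatrix}\bigr]$ so that $\mat{U}^*\,\mat{X}\,\mat{U}$ has diagonal blocks $\tfrac12(\mat{A}+\mat{B})$ and off-diagonal blocks $\tfrac12(\mat{B}-\mat{A})$, hence is elementwise nonnegative when $\mat{A}\ewleq\mat{B}$, and then reading $f(\mat{B})-f(\mat{A})\ewgeq\mat{0}$ off the off-diagonal block of $f(\mat{U}^*\,\mat{X}\,\mat{U})=\mat{U}^*\,f(\mat{X})\,\mat{U}$, with the convention $\infty-a=\infty$. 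Your reduction to Theorem~\ref{thm:m-positive-equal-m-monotone} by zero-padding, followed by the monotone lift along leading principal submatrices, is a legitimate alternative: it reuses the cited finite result as a black box, and the monotonicity of $[f(\mat{A}_n)]_{i,j}$ in $n$ (obtained from finite $m$-monotonicity applied to $\mathrm{diag}(\mat{A}_m,\mat{0})\ewleq\mat{A}_n$) is a useful by-product that the paper only extracts later, inside the proof of Theorem~\ref{thm:m-positive-convergence}. The reverse implication is identical in both proofs.

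The caveat you flag at the end is real, but it cuts against the paper's proof as much as against yours: Definition~\ref{def:unbounded-m-positive} never specifies how $f(\mat{A})$ is evaluated when $\mat{A}$ or $f(\mat{A})$ violates \eqref{eq:infinite-matrix-condition}, and the paper's manipulation of $f(\mat{X})$ and $\mat{U}^*\,f(\mat{X})\,\mat{U}$ for such matrices is exactly as formal as your truncation limit. Where the difference could matter is downstream: in Theorem~\ref{thm:m-positive-convergence} the conclusion is applied with $f(\mat{A}_\infty)$ meaning the spectral-calculus operator, so under your reading one still has to check that the monotone truncation limit does not exceed the spectral value $\langle\mat{e}_k,f(\mat{A}_\infty)\,\mat{e}_k\rangle$. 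That comparison does hold for the absolutely monotone bounding functions the paper actually uses (it is the matrix analogue of \eqref{eq:matrix-gauss-inequality}), but it is an extra step your formulation leaves open, whereas the paper's block-unitary argument delivers the inequality against $f(\mat{A}_\infty)$ itself in one stroke.
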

\begin{proof}
  We adapt the proof of \cite[Theorem~2.1]{Hansen:1992} for symmetric arbitrary
  sized $n\times n$ matrices to unbounded symmetric infinite matrices
  and check that the same steps are valid.
  First, we prove that if $f$ is unbounded $m$-monotone, it is also
  unbounded $m$-positive. We set $\mat{A}=\mat{0}_\infty$ to obtain
  $\mat{0}_\infty\ewleq f(\mat{0}_\infty)\ewleq f(\mat{B})$ for all symmetric
  infinite matrices.

  To prove that unbounded $m$-positive $f$ is also unbounded $m$-monotone,
  we consider symmetric infinite matrices $\mat{A},\mat{B}\ewgeq \mat{0}$
  with the spectra in $I$. 
  We construct a $2\times 2$ block infinite matrix
  \begin{equation*}
    \mat{X}=
    \left[
      \begin{array}{cc}
        \mat{A} & \mat{0}_\infty \\
        \mat{0}_\infty & \mat{B}
      \end{array}
      \right].
  \end{equation*}
  We can extend the $\ewleq$ relation for the block matrix simply as blockwise.
  The spectrum of $\mat{X}$ is also contained in $I$.
  We define a $2\times 2$ unitary infinite block matrix
  \begin{equation*}
    \mat{U}=
    \frac{1}{\sqrt{2}}
    \left[
      \begin{array}{cc}
        \mat{I}_\infty & -\mat{I}_\infty\\
        \mat{I}_\infty & \mat{I}_\infty
      \end{array}
      \right].
  \end{equation*}
  We calculate
  \begin{equation*}
    \mat{U}^*\,\mat{X}\,\mat{U}=
    \frac{1}{2}
    \left[
      \begin{array}{cc}
        \mat{A}+\mat{B} & \mat{B} - \mat{A}\\
        \mat{B} - \mat{A} & \mat{A} + \mat{B}
      \end{array}
      \right],
  \end{equation*}
  and notice that the spectrum of
  $\mat{U}^*\,\mat{X}\,\mat{U}$ is
  contained in $I$ and that
  $\mat{U}^*\,\mat{X}\,\mat{U}\ewgeq \mat{0}_\infty$
  if $\mat{A}\ewleq\mat{B}$. If $f$ is unbounded $m$-positive, then
  \begin{align*}
    \mat{0} &\ewleq f(\mat{U}^*\,\mat{X}\,\mat{U})
    =
    \mat{U}^*\,f(\mat{X})\,\mat{U}
    =
    \mat{U}^*\,
    \left[
      \begin{array}{cc}
        f(\mat{A})    & \mat{0}_\infty\\
        \mat{0}_\infty & f(\mat{B})
      \end{array}
      \right]
    \mat{U}\\
    &=
    \frac{1}{2}
    \left[
      \begin{array}{cc}
        f(\mat{A})+f(\mat{B}) & f(\mat{B})-f(\mat{A})\\
        f(\mat{B})-f(\mat{A}) & f(\mat{A})+f(\mat{B})
      \end{array}
      \right],
  \end{align*}
  which implies that $f(\mat{A}) \ewleq f(\mat{B})$. Here we multiply only
  $f(\mat{A})$, which has finite elements, with $-\mat{I}_\infty$ and define
  $\infty-a=\infty$ for all $a\in[0,\infty)$.
\end{proof}

Absolutely monotone functions, that is, functions that have power series 
expansion $h(x)=\sum_{n=0}^\infty c_n\,x^n$ where $c_n\geq 0$, 
are unbounded $m$-positive and
therefore also unbounded $m$-monotone. These functions are also very
fast-growing, so if we have convergence for such functions, we also have
convergence for a large family of functions that grow slower.
  Examples of functions that have power series representation with
  nonnegative coefficients
  are $\exp(x)$, $\cosh(\sqrt{x})$, which were used in \cite{Uspensky:1928},
  and more generally, the Mittag--Leffler functions
  $  E_\gamma(x)=\sum_{k=0}^\infty \frac{x^k}{\Gamma(\gamma\,k+1)}$
  where $0<\gamma\leq 2$ and $x\in[0,\infty)$
  \cite[eq. (3.3)]{Bultheel+Diaz-Mendoza+Gonzalez-Vera+Orive:2000}.

For functions bounded by unbounded $m$-positive functions, we have the following 
theorem. 
\begin{theorem}
  \label{thm:m-positive-convergence}
  Let a self-adjoint infinite matrix $\mat{A}_\infty\ewgeq \mat{0}_\infty$ satisfy
  \eqref{eq:infinite-matrix-condition}. 
  Let function $f$ be bounded by a positive and unbounded $m$-positive 
  function $h$, that is, $\abs{f(x)}\leq h(x)$ for $x\in\sigma(\mat{A}_\infty)$.
  Let finite matrix approximations $\mat{A}_n$ be the leading principal
  submatrices of $\mat{A}_\infty$.
  Then, as $n\rightarrow\infty$,
  $[f(\mat{A}_n)]_{k,k}\rightarrow [f(\mat{A}_\infty)]_{k,k}$
  for all $k=0,1,2,\ldots$ for which $[h(\mat{A})]_{k,k}<\infty$.
\end{theorem}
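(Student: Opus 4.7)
The plan is to establish the $m$-monotonicity inequality
$[h(\mat{A}_n)]_{k,k}\leq[h(\mat{A}_\infty)]_{k,k}$
as the nonnegative-entry analogue of the Gaussian-quadrature inequality
\eqref{eq:wx_ineq}, and then to invoke the strong resolvent machinery to pass
first from $h$ to convergence and finally to the dominated function $f$.

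Concretely, I would first embed the finite principal submatrix $\mat{A}_n$ into
$\ell^2$ as the block-diagonal padded infinite matrix $\tilde{\mat{A}}_n$ of
Theorem~\ref{thm:srs-convergence}, so that
$\tilde{\mat{A}}_n\xrightarrow{srs}\mat{A}_\infty$ is immediate.
Because $\mat{A}_\infty\ewgeq\mat{0}$, the inequality
$\tilde{\mat{A}}_n\ewleq\mat{A}_\infty$ holds entrywise, and the spectrum
$\sigma(\mat{A}_n)$ of the compression lies in the convex hull of
$\sigma(\mat{A}_\infty)$ and hence in the interval $I$ on which $h$ is defined.
Since $h\geq 0$ forces $h(0)\geq 0$, Theorem~\ref{thm:operator-m} upgrades the
unbounded $m$-positive function $h$ to unbounded $m$-monotone, and the
monotonicity yields $h(\tilde{\mat{A}}_n)\ewleq h(\mat{A}_\infty)$. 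Reading off
the $(k,k)$ entry and using the block-diagonal identity
$h(\mat{A}_n\oplus\mat{0})=h(\mat{A}_n)\oplus h(0)\mat{I}$, which is valid
because $\tilde{\mat{A}}_n$ has finite spectrum, gives the key estimate
$[h(\mat{A}_n)]_{k,k}=[h(\tilde{\mat{A}}_n)]_{k,k}\leq[h(\mat{A}_\infty)]_{k,k}<\infty$
for all $n\geq k$.

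With this uniform upper bound in place, Lemma~\ref{lemma:bounded-convergence}
applied to the nonnegative function $h$ and $\phi=\mat{e}_k$ delivers
$[h(\mat{A}_n)]_{k,k}\to[h(\mat{A}_\infty)]_{k,k}$.
Since $|f|\leq h$,
Theorem~\ref{thm:scalar-dominated-convergence} then transfers this convergence
to $f$, yielding $[f(\mat{A}_n)]_{k,k}\to[f(\mat{A}_\infty)]_{k,k}$ as desired.

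The step I expect to require the most care is the elementwise inequality
$h(\tilde{\mat{A}}_n)\ewleq h(\mat{A}_\infty)$ in the unbounded,
infinite-dimensional setting: although $h(\tilde{\mat{A}}_n)$ is an ordinary
bounded operator, $h(\mat{A}_\infty)$ need not satisfy
\eqref{eq:infinite-matrix-condition}, so the inequality must be interpreted
entrywise in $[0,+\infty]$ precisely as built into the extended
Definition~\ref{def:unbounded-m-positive} rather than through the usual
operator-valued functional calculus. A secondary technical point is that
Theorem~\ref{thm:scalar-dominated-convergence} presupposes Riemann--Stieltjes
integrability of $f$ and $h$ with respect to $\mat{A}_\infty$ together with
continuity at its eigenvalues; these mild regularity assumptions on $f$ must be
assumed or inherited from the absolutely monotone character typical of
unbounded $m$-positive dominating functions.
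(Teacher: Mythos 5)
Your proposal is correct and follows essentially the same route as the paper's own proof: pad $\mat{A}_n$ to a block-diagonal infinite matrix, use the entrywise bound $\tilde{\mat{A}}_n\ewleq\mat{A}_\infty$ together with Theorem~\ref{thm:operator-m} to get $[h(\mat{A}_n)]_{k,k}\leq[h(\mat{A}_\infty)]_{k,k}$, conclude convergence for $h$ via Lemma~\ref{lemma:bounded-convergence}, and transfer to $f$ via Theorem~\ref{thm:scalar-dominated-convergence}. The regularity caveat you raise about Theorem~\ref{thm:scalar-dominated-convergence} is also left implicit in the paper, so your treatment matches it.
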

\begin{proof}
  Let self-adjoint infinite matrices
  \begin{equation*}
    \mat{B}_n=
    \left[
      \begin{array}{cc}
        \mat{A}_n                & \mat{0}_{(n+1)\times\infty}\\
        \mat{0}_{\infty\times (n+1)} & \mat{0}_\infty 
      \end{array}
      \right].
  \end{equation*}
  Because $\mat{B}_n\ewleq \mat{A}$,
  by Theorem~\ref{thm:operator-m}, the sequence
  $[h(\mat{B}_n)]_{k,k}$ is bounded by $[h(\mat{A})]_{k,k}$, which means by
  Lemma~\ref{lemma:bounded-convergence} that it must converge to
  $[h(\mat{A})]_{k,k}$. 
  Because for $n\geq k$, we have 
  $[h(\mat{A}_n)]_{k,k}=[h(\mat{B}_n)]_{k,k}$, we also have
  $[h(\mat{A}_n)]_{k,k}\rightarrow [h(\mat{A})]_{k,k}$.
  Due to Theorem~\ref{thm:scalar-dominated-convergence}, we have
  convergence for $f$ bounded by $h$. 
\end{proof}

In terms of restrictions for the bounds of the outside function $h$,
we can reach the
most generic level of convergence, that is, convergence for functions
that have nonnegative even derivatives, like in
\cite[Chapter~IV, Section~8--10]{Shohat+Tamarkin:1963},
on the interval $(-\infty,\infty)$
when $w$ is an even function, basis functions are even or odd,
and $g$ is odd. In more general cases,
we have to use a stricter condition that the bounding function $h$
is absolutely monotone, that is, has all derivatives positive.
\begin{theorem}
  \label{thm:power-series-convergence}
  Let the orthonormal functions $\phi_0,\phi_1,\phi_2,\ldots$ be dense in
  $\vectorspace{L}^2_w(\Omega)$.
  Let the function
  $g$ satisfy \eqref{eq:mult-mat} and let the matrices $\mat{M}_n[g]$ have
  elements as in \eqref{eq:matrix-elements} that are nonnegative.
  Furthermore, let a function $f$ be Riemann--Stieltjes integrable with
  respect to multiplication operator $\operator{M}[g]$ and bounded by
  a function $h$ so that $\abs{f(x)}\leq h(x)$ and
    \begin{equation*}
      \int_{\Omega} h(g(\mat{x}))\,\abs{\phi_i(\mat{x})}^2\,w(\mat{x})\,d\mat{x} < \infty.
    \end{equation*}  
  Then
  \begin{equation*}
    \lim_{n\rightarrow\infty} [f(\mat{M}_n[g])]_{i,i}=
    \int_{\Omega} f(g(\mat{x}))\,\abs{\phi_i(\mat{x})}^2\,w(\mat{x})\,d\mat{x},
  \end{equation*}
  if one of the following additional conditions holds:
  \begin{enumerate}
  \item Function $h$ has the form
     $ h(x) = \sum_{k=0}^\infty a_k\,x^k$
    where $a_k\geq 0$ for all $k=0,1,\ldots$.
  \item Function $g(\mat{x})$ satisfies
    \begin{equation*}
      \int_\Omega g(\mat{x})^{2\,k+1}\,\abs{\phi_i(\mat{x})}^2\,w(\mat{x})\,d\mat{x}
      =[\mat{M}_n[g]^{2\,k+1}]_{i,i}=0
    \end{equation*}
    for all $k=0,1,2,\ldots$ and all even coefficients of 
 $     h(x)=\sum_{k=0}^\infty a_k\,x^k$
    are nonnegative: $a_{2k}\geq 0$.
  \end{enumerate}
\end{theorem}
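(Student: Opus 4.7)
The plan is to route the argument through Theorem~\ref{thm:srs-convergence} (for strong resolvent convergence of the finite matrices), Theorem~\ref{thm:operator-m} (to promote unbounded $m$-positivity to unbounded $m$-monotonicity), Lemma~\ref{lemma:bounded-convergence} (to upgrade a uniform upper bound into genuine convergence), and Theorem~\ref{thm:scalar-dominated-convergence} (to transfer convergence from the dominating function down to $f$). I set up notation by letting $\mat{B}_n$ denote the zero-padded leading principal $(n+1)\times(n+1)$ block of $\mat{M}_\infty[g]$; Theorem~\ref{thm:srs-convergence} then gives $\mat{B}_n\xrightarrow{srs}\mat{M}_\infty[g]$, and for every $n\geq i$ one has $[h(\mat{B}_n)]_{i,i}=[h(\mat{M}_n[g])]_{i,i}$ and similarly for $f$.

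For case 1, the dominating function $h(x)=\sum_{k=0}^\infty a_k x^k$ with $a_k\geq 0$ is absolutely monotone, hence unbounded $m$-positive, and by Theorem~\ref{thm:operator-m} also unbounded $m$-monotone. The hypothesis that all $[\mat{M}_n[g]]_{i,j}\geq 0$ combined with the block structure yields $\mat{0}\ewleq\mat{B}_n\ewleq\mat{M}_\infty[g]$, and therefore
\begin{equation*}
  [h(\mat{B}_n)]_{i,i}\leq [h(\mat{M}_\infty[g])]_{i,i}=\int_\Omega h(g(\mat{x}))\,|\phi_i(\mat{x})|^2\,w(\mat{x})\,d\mat{x}<\infty.
\end{equation*}
Since $0\leq|f|\leq h$ forces $h\geq 0$, Lemma~\ref{lemma:bounded-convergence} with $\phi=\mat{e}_i$ upgrades this upper bound into the convergence $[h(\mat{M}_n[g])]_{i,i}\rightarrow [h(\mat{M}_\infty[g])]_{i,i}$. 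Theorem~\ref{thm:scalar-dominated-convergence} then transfers this convergence from $h$ to $f$; the continuity of $f$ and $h$ at the eigenvalues of $\operator{M}[g]$ that this theorem requires is already built into the hypothesis of Riemann--Stieltjes integrability with respect to $\operator{M}[g]$.

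For case 2, $h$ itself need not be unbounded $m$-positive, so I would replace it, inside the matrix element of interest, by its even part $h_e(x):=\sum_{k=0}^\infty a_{2k}x^{2k}$. The vanishing-of-odd-moments assumption yields the algebraic identities $[h(\mat{M}_n[g])]_{i,i}=[h_e(\mat{M}_n[g])]_{i,i}$ and $[h(\mat{M}_\infty[g])]_{i,i}=[h_e(\mat{M}_\infty[g])]_{i,i}$, so the finiteness hypothesis carries over to $h_e$. Since $h_e$ has nonnegative coefficients at every power (the odd ones being zero), the argument of case 1 applies verbatim with $h_e$ in place of $h$ and produces $[h_e(\mat{M}_n[g])]_{i,i}\rightarrow [h_e(\mat{M}_\infty[g])]_{i,i}$, which via the identities above is the convergence $[h(\mat{M}_n[g])]_{i,i}\rightarrow [h(\mat{M}_\infty[g])]_{i,i}$; Theorem~\ref{thm:scalar-dominated-convergence} with $|f|\leq h$ then closes case 2. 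The hard part I anticipate is the term-by-term justification of the spectral-series identity $[h(\mat{M})]_{i,i}=\sum_{k=0}^\infty a_k\,[\mat{M}^k]_{i,i}$, both for $\mat{M}_n[g]$ and for $\mat{M}_\infty[g]$, which I expect to handle via monotone convergence on the nonnegative partial sums of $h$ (respectively $h_e$) once the interval $I$ on which $m$-monotonicity is invoked is chosen to contain $\{0\}$ together with the essential range of $g$, so that both $\mat{B}_n$ and $\mat{M}_\infty[g]$ have their spectra inside $I$.
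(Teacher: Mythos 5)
Your argument is correct and follows essentially the same route as the paper: the paper simply cites its Theorem~\ref{thm:m-positive-convergence}, whose proof is exactly your chain (elementwise domination $\mat{0}\ewleq\mat{B}_n\ewleq\mat{M}_\infty[g]$, unbounded $m$-monotonicity via Theorem~\ref{thm:operator-m}, Lemma~\ref{lemma:bounded-convergence}, then Theorem~\ref{thm:scalar-dominated-convergence}), and your even-part reduction for case~2 is precisely what the paper's one-line remark about the odd coefficients $a_{2k+1}$ is alluding to. The only difference is that you have inlined and slightly elaborated the cited theorem's proof rather than invoking it.
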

\begin{proof}
  The proof follows from
  Theorem~\ref{thm:m-positive-convergence}. Under the second condition,
  the odd $a_{2k+1}$ coefficients can also be negative.
\end{proof}

We give a numerical example demonstrating the difference in convergence
for matrices with purely nonnegative elements and matrices with
mixed sign elements in Section~\ref{sec:numerical-example}.

In some cases, it is also possible to prove convergence when
matrix coefficients have mixed signs by considering the negative and positive
parts separately. We decompose a self-adjoint real infinite matrix $\mat{A}$
into positive $\mat{P}$
and negative $\mat{N}$ parts with
\begin{align*}
  [\mat{A}^+]_{i,j}&=
  \left\{
    \begin{array}{llll}
      [\mat{A}]_{i,j} &\mathrm{for} & [\mat{A}]_{i,j}\geq 0,\\
      0              &\mathrm{for} & [\mat{A}]_{i,j}<0,
    \end{array}
    \right.
\end{align*}
so that $\mat{P}=\mat{A}^+$ and $\mat{N}=(-\mat{A})^+$.
Then $\mat{A}=\mat{P} - \mat{N}$. We construct
a $2\times 2$ block infinite matrix with nonnegative elements
\begin{equation}
  \label{eq:positive-negative-block}
  \mat{T}=
  \left[
    \begin{array}{cc}
      \mat{P} & \mat{N}\\
      \mat{N} & \mat{P}
    \end{array}
    \right].
\end{equation}
We can express any
matrix element of a function of a self-adjoint real infinite matrix as a
function of a $2\times 2$ block infinite matrix with nonnegative coefficients
as
\begin{equation*}
  [f(\mat{A})]_{i,j}=
  \mat{e}_i^\top\,f(\mat{A})\,\mat{e}_j=
  \left[
    \begin{array}{cc}
      \mat{e}_i^\top & \mat{0}_{1\times\infty}
      \end{array}
    \right]\,
  f\left(
  \left[
    \begin{array}{cc}
      \mat{P} & \mat{N}\\
      \mat{N} & \mat{P}
    \end{array}
    \right]
  \right)
  \,\left[
    \begin{array}{c}
      \mat{e}_j\\
      -\mat{e}_j
    \end{array}
    \right].
\end{equation*}

For example,
we consider an unbounded function $g(x)=x\,\cos(\sqrt{x})$, a weight
function $w(x)=e^{-x}$ on interval $[0,\infty)$ and polynomials as orthogonal
functions.
  The infinite matrix $\mat{M}_\infty[g]$ is unbounded because function
  $g$ is an unbounded function on the interval $[0,\infty)$. The elements of
    $\mat{M}_\infty[g]$ are given by
\begin{equation*}
  \mat{M}_\infty[g]
  =
  \frac{\mathrm{i}\sqrt{\pi}\,\operatorname{erf}
    \left(
    \frac{\mathrm{i}}
         {2}
         \right)
         \,e^{-\frac{1}{4}}}
       {8192}\,
  \left[
    \begin{smallmatrix}
      5120   & 5376   & -1120  & \ldots \\
      5376   & 13632  & 4456   & \ldots \\
      -1120  & 4456   & 10293  & \ldots \\
      \svdots & \svdots & \svdots & \sddots
    \end{smallmatrix}
    \right]%
     +
  \frac{1}{4096}\,
  \left[
    \begin{smallmatrix}
      3072   & 768    & {-2208}  & \ldots\\
      768    & 192    & -6312  & \ldots\\
      -2208  & -6312  & -11613 & \ldots \\
      \svdots & \svdots & \svdots & \sddots
    \end{smallmatrix}
    \right],
\end{equation*}
where $\operatorname{erf}(x)=\frac{2}{\sqrt{\pi}}\int_0^x e^{-t^2}dt$.
Some of the elements are negative, and some are positive.
The pattern for the signs of the matrix elements is
\begin{equation*}
  \left[
    \begin{smallmatrix}
      + & - & - & + & - & + & \ldots\\
      - & - & - & - & + & - & \ldots\\
      - & - & - & - & + & + & \ldots\\
      + & - & - & - & - & + & \ldots\\
      - & + & + & - & - & - & \ldots\\
      + & - & + & + & - & - & \ldots\\
      \svdots & \svdots & \svdots & \svdots & \svdots & \svdots & \sddots
    \end{smallmatrix}
    \right].
\end{equation*}
However, it is possible to prove convergence for polynomially bounded 
functions, as is shown
in the following theorem.
\begin{theorem}
  Let $w(x)=e^{-x}$ on interval $[0,\infty)$,
    the orthogonal basis functions $\phi_i$ polynomials
and $g(x)=x\,\cos(\sqrt{x})$. Let $\mat{M}_n[g]$ have
elements as in  \eqref{eq:matrix-elements}. Then
\begin{equation*}
  [f(\mat{M}_n[g])]_{i,j}\rightarrow [\mat{M}_\infty[f(g)]]_{i,j}=
  \int_0^\infty \phi_i(x)\,\phi_j(x)\,f(g(x))\,w(x)\,dx,
\end{equation*}
when $f$ is bounded by some polynomial.
\end{theorem}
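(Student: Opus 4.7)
The plan is to use the block-matrix reduction sketched in Section~\ref{sec:nonnegative-matrix-coefficients}, which turns the mixed-sign matrix $\mat{M}_\infty[g]$ into a matrix with only nonnegative entries, and then to apply the unbounded $m$-positive machinery in combination with a polarisation identity.

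I would begin by decomposing $\mat{M}_\infty[g]=\mat{P}-\mat{N}$ into its elementwise positive and negative parts and forming the infinite nonnegative block matrix
\[
  \tilde{\mat{B}}=\begin{bmatrix}\mat{P}&\mat{N}\\ \mat{N}&\mat{P}\end{bmatrix}.
\]
The identity displayed just before the theorem rewrites the target as
\[
  [f(\mat{M}_n[g])]_{i,j}=[f(\tilde{\mat{B}}_n)]_{(0,i),(0,j)}-[f(\tilde{\mat{B}}_n)]_{(0,i),(1,j)},
\]
so the task reduces to proving elementwise convergence $[f(\tilde{\mat{B}}_n)]_{p,q}\to[f(\tilde{\mat{B}})]_{p,q}$ for $(p,q)=((0,i),(0,j))$ and $((0,i),(1,j))$. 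Since $f$ is polynomially bounded I fix $c,d\ge 0$ and an even exponent $2k$ with $|f(x)|\le h(x):=c+dx^{2k}$. The bound $h$ is a polynomial with nonnegative coefficients and is therefore unbounded $m$-positive, hence unbounded $m$-monotone by Theorem~\ref{thm:operator-m}. The inclusion $\tilde{\mat{B}}_n\ewleq\tilde{\mat{B}}$ then yields $h(\tilde{\mat{B}}_n)\ewleq h(\tilde{\mat{B}})$ elementwise.

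To obtain convergence of individual entries, I would apply Lemma~\ref{lemma:bounded-convergence} to the quadratic forms $\langle\mathbf{w},h(\tilde{\mat{B}}_n)\mathbf{w}\rangle$ for nonnegative vectors $\mathbf{w}$, which are dominated by the corresponding forms with $\tilde{\mat{B}}$ by the above elementwise bound. Taking $\mathbf{w}=\mathbf{e}_p$ gives convergence of the diagonal entries, and the polarisation identity
\[
  2[h(\tilde{\mat{B}}_n)]_{p,q}=\langle\mathbf{e}_p+\mathbf{e}_q,h(\tilde{\mat{B}}_n)(\mathbf{e}_p+\mathbf{e}_q)\rangle-[h(\tilde{\mat{B}}_n)]_{p,p}-[h(\tilde{\mat{B}}_n)]_{q,q}
\]
(with $\mathbf{e}_p+\mathbf{e}_q$ still nonnegative) then delivers convergence for the off-diagonal entries as well. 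Theorem~\ref{thm:scalar-dominated-convergence} transfers the convergence from $h$ to any Riemann--Stieltjes integrable $f$ with $|f|\le h$, and the block-matrix identity produces the asserted limit $\int_0^\infty \phi_i(x)\phi_j(x)f(g(x))e^{-x}\,dx$.

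The main obstacle is the finiteness of $[h(\tilde{\mat{B}})]_{p,q}$, without which Lemma~\ref{lemma:bounded-convergence} is vacuous. Under the unitary change of basis $U=\tfrac{1}{\sqrt{2}}\bigl[\begin{smallmatrix}I&I\\ I&-I\end{smallmatrix}\bigr]$ the block matrix diagonalises as $U\tilde{\mat{B}}U^{*}=\mat{M}_\infty[g]\oplus(\mat{P}+\mat{N})$, so the finiteness question splits into two parts. The $\mat{M}_\infty[g]$ contribution is $\int_0^\infty h(g)\phi_i\phi_j e^{-x}\,dx$, which is bounded by $\int_0^\infty(c+dx^{2k})|\phi_i\phi_j|e^{-x}\,dx<\infty$ because $|g(x)|\le x$ and Laguerre polynomials have finite polynomial moments against $e^{-x}$. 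The $\mat{P}+\mat{N}$ contribution reduces to bounding the iterated sums $[(\mat{P}+\mat{N})^{2k}]_{i,j}=\sum_{\ell_1,\dots,\ell_{2k-1}}|[\mat{M}_\infty[g]]_{i,\ell_1}|\cdots|[\mat{M}_\infty[g]]_{\ell_{2k-1},j}|$. Here I would exploit the power series $x\cos\sqrt{x}=\sum_{n=0}^\infty(-1)^nx^{n+1}/(2n)!$ together with the band structure of each multiplication operator $\mat{M}_\infty[x^{n+1}]$ in the Laguerre basis (band width $n+1$): the entry $|[\mat{M}_\infty[g]]_{i,j}|$ is then controlled by $\sum_{n\ge|i-j|-1}\tfrac{1}{(2n)!}|\langle\phi_i,x^{n+1}\phi_j\rangle|$, and the factorial decay of $1/(2n)!$ is expected to dominate the polynomial growth of the Laguerre moments and yield a sufficiently rapid decay of $|[\mat{M}_\infty[g]]_{i,j}|$ in $|i-j|$ for the iterated sums defining $(\mat{P}+\mat{N})^{2k}$ to converge. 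Making this combinatorial/analytic bookkeeping rigorous is the most technical step of the proof, after which the convergence follows from the general machinery above.
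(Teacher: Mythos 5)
Your overall strategy --- pass to a $2\times 2$ block matrix with nonnegative entries, invoke the unbounded $m$-monotone machinery of Theorem~\ref{thm:operator-m} to dominate the truncations, and then transfer convergence down to $f$ via Lemma~\ref{lemma:bounded-convergence} and Theorem~\ref{thm:scalar-dominated-convergence} --- is the same as the paper's. The difference, and the place where your argument has a genuine hole, is the step you yourself flag as ``the most technical'': proving that the entries of $h(\tilde{\mat{B}})$, equivalently of $(\mat{P}+\mat{N})^{2k}$, are finite. Without that, Lemma~\ref{lemma:bounded-convergence} gives nothing (and $\tilde{\mat{B}}$ is not even known to define a self-adjoint operator whose truncations converge in strong resolvent sense). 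Your proposed route --- estimating $\lvert[\mat{M}_\infty[g]]_{i,j}\rvert$ through the band structure of $\mat{M}_\infty[x^{n+1}]$ and hoping the $1/(2n)!$ decay beats the growth of the Laguerre moments, then summing $2k$-fold iterated products --- is exactly the bookkeeping the paper avoids, and it is not carried out; ``is expected to dominate'' is not a proof, and the competition between $1/(2n)!$ and moments like $\langle\phi_i,x^{n+1}\phi_j\rangle$, which grow factorially in $n$, is delicate enough that the conclusion cannot be waved through.

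The paper closes this gap with a different decomposition that makes the finiteness trivial. Instead of the elementwise sign split, it splits the Taylor series of $g(x)=x\cos(\sqrt{x})=\sum_k\frac{(-1)^k}{(2k)!}x^{k+1}$ into its positive- and negative-coefficient halves, setting $[\mat{P}]_{i,j}=\sum_k\frac{1}{(4k)!}[\mat{M}_\infty[\operatorname{id}]^{2k+1}]_{i,j}$ and $[\mat{N}]_{i,j}=\sum_k\frac{1}{(4k+2)!}[\mat{M}_\infty[\operatorname{id}]^{2k+2}]_{i,j}$; both are entrywise nonnegative (the Laguerre Jacobi matrix is entrywise nonnegative) and both are entrywise dominated by $\mat{M}_\infty[h]$ with $h(x)=x\cosh(\sqrt{x})=\sum_k\frac{1}{(2k)!}x^{k+1}$, i.e.\ by the matrix of an honest multiplication operator. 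Monotonicity of $x^m$ (Theorem~\ref{thm:operator-m}) then bounds the block-matrix powers by powers of the all-$\mat{M}_\infty[h]$ block matrix, whose entries are finite because $h^m\phi_i\in\vectorspace{L}^2_w$ for every $m$ ($\cosh(\sqrt{x})^m$ grows subexponentially against $e^{-x}$). This ``replace $\cos$ by $\cosh$ at the level of Taylor coefficients'' comparison is the one idea your proposal is missing. It would also rescue your version: the elementwise positive and negative parts of $\mat{M}_\infty[g]$ are themselves entrywise dominated by the Taylor-series parts, hence by $\mat{M}_\infty[h]$, so the same comparison bounds your $(\mat{P}+\mat{N})^{2k}$ without any band-structure combinatorics. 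As written, however, the proposal does not prove the theorem.
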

\begin{proof}
The orthogonal polynomials $\phi_i$ are known as the Laguerre polynomials.
Elements of the Jacobi matrix, the infinite matrix representation for
the function $\operatorname{id}(x)=x$, are
nonnegative. We can see this by looking at the
exact formula for the recurrence coefficients of the Jacobi matrix
for the Laguerre polynomials
\cite[Table~1.1 and Definition~1.30]{Gautschi:2004}.
The main diagonal elements are the odd positive numbers, and
  the off-diagonal
  elements are the natural numbers all in a growing order and thus nonnegative
  as is also visible in the few first matrix
  coefficients
  \begin{equation}
    \label{eq:Laguerre-system-Jacobi-matrix}
    \mat{M}_4[\operatorname{id}]=
    \left[
      \begin{array}{ccccc}
        1  &   1  &   0  &   0  &   0\\
        1  &   3  &   2  &   0  &   0\\
        0  &   2  &   5  &   3  &   0\\
        0  &   0  &   3  &   7  &   4\\
        0  &   0  &   0  &   4  &   9
      \end{array}
      \right].
  \end{equation}
  
  For a function $h(x)=x\,\cosh(\sqrt{x})$, the elements of an infinite
  matrix $\mat{M}_\infty[h]$ are also nonnegative because $h$
  has a Taylor series expansion
  $%
    h(x)=x\,\cosh(\sqrt{x})=\sum_{k=0}^\infty \frac{1}{(2\,k)!}\,x^{k+1}
  $ %
  with only nonnegative coefficients,
  and that converges for all $x\in[0,\infty)$.
    The infinite matrix representation of the multiplication operator
    of function $h$ has a series representation for the elements as
    \begin{align*}
      [\mat{M}[h]]_{i,j}
      &=
      \sum_{k=0}^\infty \frac{1}{(2\,k)!}\,
          [\mat{M}_\infty[\operatorname{id}]^{k+1}]_{i,j} \geq 0.
    \end{align*}
    The function $g$ has almost the same Taylor series expansion as
    the function $h$
    except some coefficients
    are negative
$%
  g(x)=x\,\cos(\sqrt{x})=\sum_{k=0}^\infty \frac{(-1)^k}{(2\,k)!}\,x^{k+1}.
$ %
However, the negative coefficients are bounded by the coefficients of
the Taylor series of the function $h$.

We decompose $\mat{M}_\infty[g]$ into the positive and negative parts as
$\mat{M}_\infty[g]=\mat{P}-\mat{N}$ and form a $2\times 2$
block infinite matrix $\mat{T}$ as in \eqref{eq:positive-negative-block}.
The elements of both the positive $\mat{P}$ and
the negative $\mat{N}$ part are bounded by the elements of the infinite matrix
representation
of the multiplication operator for the function $h$ as
\begin{align*}
  [\mat{P}]_{i,j}
  &=
  \sum_{k=0}^\infty \frac{1}{(4\,k)!}\,
      [\mat{M}_\infty[\operatorname{id}]^{2\,k+1}]_{i,j} \leq
      [\mat{M}_\infty[h]]_{i,j},\\
  [\mat{N}]_{i,j}
  &=
  \sum_{k=0}^\infty \frac{1}{(4\,k + 2)!}\,
      [\mat{M}_\infty[\operatorname{id}]^{2\,k+2}]_{i,j} \leq
      [\mat{M}_\infty[h]]_{i,j},
\end{align*}
that is, $\mat{P},\mat{N}\ewleq \mat{M}_\infty[h]$.
By Theorem~\ref{thm:operator-m},
\begin{equation*}
  \mat{T}^m=
  \left[
    \begin{array}{cc}
      \mat{P} & \mat{N}\\
      \mat{N} & \mat{P}
    \end{array}
    \right]^m
  \ewleq
  \left[
    \begin{array}{cc}
      \mat{M}_\infty[h] & \mat{M}_\infty[h] \\
      \mat{M}_\infty[h] & \mat{M}_\infty[h]
      \end{array}
      \right]^m.
\end{equation*}
Because $\mat{M}_\infty[h]^m$ satisfies \eqref{eq:infinite-matrix-condition}
for all $m\in\mathbb{N}$,
then also $\mat{T}^m$
satisfies \eqref{eq:infinite-matrix-condition} for all $m\in\mathbb{N}$.
By Theorem~\ref{thm:m-positive-convergence}, we have convergence for
the diagonal elements of the $2\,m$ powers. By 
Lemma~\ref{lemma:convergence-of-norm}, we have
\begin{equation*}
  \left[
    \begin{array}{cc}
      \mat{e}_i^\top & \mat{0}_{1\times\infty}
    \end{array}
    \right]
  \,
    \left[
    \begin{array}{cc}
      \mat{P}_n & \mat{N}_n\\
      \mat{N}_n & \mat{P}_n
    \end{array}
    \right]^m\rightarrow
      \left[
    \begin{array}{cc}
      \mat{e}_i^\top & \mat{0}_{1\times\infty}
    \end{array}
    \right]
    \,
    \left[
    \begin{array}{cc}
      \mat{P} & \mat{N}\\
      \mat{N} & \mat{P}
    \end{array}
    \right]^m
\end{equation*}
as $n\rightarrow\infty$.
Convergence for polynomially
bounded functions follows from
Theorem~\ref{thm:dominated-convergence}.
\end{proof}

\subsection{Improper integrals on finite endpoints of an interval}
\label{sec:improper-integrals-on-finite-endpoints-of-an-interval}
In this section, we analyse convergence for integrals that are
singular on a finite endpoint. A minimum requirement for convergence
is that a singular endpoint of the function is not a node of
the numerical integration rule. We already know that the nodes
of numerical integration, that is, the eigenvalues of
$\mat{M}_n[g]$, lie in the closed interval
$[\operatorname{ess\,inf} g,\operatorname{ess\,sup} g]$
\cite[Theorem~1]{Sarmavuori+Sarkka:2019}.
The \textit{essential infimum}
$\operatorname{ess\,inf} g$ and the
\textit{essential supremum} $\operatorname{ess\,sup} g$ are
the endpoints of the convex hull of the essential range
$\mathcal{R}(g)$ \eqref{eq:essential-range}.

In the case of an improper integral, where the integrand is singular
at an endpoint of integration, we do not want the endpoint to be
an evaluation point for the numerical integration, that is,
if the outside function $f$ is singular at $\operatorname{ess\,inf} g$
or $\operatorname{ess\,sup} g$, we do not want $\operatorname{ess\,inf} g$
or $\operatorname{ess\,sup} g$ to be an eigenvalue of $\mat{M}_n[g]$.
Fortunately, we have the following theorem.
\begin{theorem}
  \label{Thm:endpoints}
For a measurable real function $g$,
$\operatorname{ess\,inf} g$ (or $\operatorname{ess\,sup} g$) 
cannot be an eigenvalue of
the matrix $\mat{M}_n[g]$ unless
\begin{enumerate} 
\item $\operatorname{ess\,inf} g$ (or $\operatorname{ess\,sup} g$) is an eigenvalue of $\mathsf{M}[g]$ and
\item the corresponding eigenfunction is $\phi=\sum_{i=0}^n c_i\,\phi_i$. 
\end{enumerate}
\end{theorem}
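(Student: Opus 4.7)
The plan is to show that if $\lambda = \operatorname{ess\,inf} g$ is an eigenvalue of $\mat{M}_n[g]$, then the corresponding linear combination of basis functions must already be an eigenfunction of the full multiplication operator $\operator{M}[g]$, which forces conditions (1) and (2). The argument for $\operatorname{ess\,sup} g$ is symmetric and will be obtained by replacing $g$ with $-g$ (or by reversing a single inequality).

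First, I would suppose that $\lambda = \operatorname{ess\,inf} g$ is an eigenvalue of $\mat{M}_n[g]$ with eigenvector $\mat{c} = (c_0,\dots,c_n)^\top \neq \mat{0}$. The natural candidate produced by the theorem statement is $\phi = \sum_{i=0}^n c_i\,\phi_i$, which is nonzero by orthonormality of the $\phi_i$. The eigenvalue equation $\mat{M}_n[g]\,\mat{c} = \lambda\,\mat{c}$ expands, using \eqref{eq:matrix-elements}, into
\begin{equation*}
\langle \phi_j, (g - \lambda)\,\phi\rangle = \sum_{i=0}^n c_i\,\langle \phi_j, g\,\phi_i\rangle - \lambda\,c_j = 0,
\qquad j = 0,1,\ldots,n.
\end{equation*}
Multiplying the $j$th equation by $\overline{c_j}$ and summing gives the scalar identity
\begin{equation*}
\langle \phi, (g - \lambda)\,\phi\rangle = \int_\Omega (g(\mat{x}) - \lambda)\,|\phi(\mat{x})|^2\,w(\mat{x})\,d\mat{x} = 0.
\end{equation*}

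The key step is then to exploit that $\lambda = \operatorname{ess\,inf} g$ forces $g(\mat{x}) - \lambda \geq 0$ almost everywhere with respect to $w$, by the characterization of the essential range given in Section~\ref{sec:prelim}. Thus the integrand $(g - \lambda)\,|\phi|^2\,w$ is nonnegative a.e.\ and has integral zero, so it must vanish a.e. This means $\phi(\mat{x}) = 0$ for almost every $\mat{x}$ outside the set $S = \{\mat{x}\in\Omega : g(\mat{x}) = \lambda\}$. Therefore $g\,\phi = \lambda\,\phi$ almost everywhere, so $\phi$ is an eigenfunction of $\operator{M}[g]$ with eigenvalue $\lambda$, establishing (2). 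Since $\phi \neq 0$ and $\phi$ is supported in $S$, we must have $\int_S w(\mat{x})\,d\mat{x} > 0$, which by the criterion recalled earlier in the paper is precisely the condition for $\lambda$ to be an eigenvalue of $\operator{M}[g]$; this establishes (1).

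The main obstacle is purely bookkeeping rather than conceptual: one has to be careful that the essential infimum condition translates correctly into an a.e.\ pointwise inequality with respect to the weight $w$, so that the vanishing of a nonnegative integrand truly pins $\phi$ to the set $\{g = \lambda\}$. For the $\operatorname{ess\,sup} g$ case, the same argument goes through with $(g - \lambda)\leq 0$ a.e., and the sign reversal does not affect the conclusion since $|\phi|^2 \geq 0$ forces the integrand to vanish regardless of which way the inequality runs.
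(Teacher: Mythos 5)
Your proposal is correct and follows essentially the same route as the paper: the paper shifts to $\tilde{g}=g-\operatorname{ess\,inf} g$ and observes that $0=\mat{c}^*\,\mat{M}_n[\tilde{g}]\,\mat{c}=\int_\Omega \tilde{g}\,|\sum_i c_i\,\phi_i|^2\,w\,d\mat{x}$ with a nonnegative integrand, which is exactly your quadratic-form argument. Your write-up merely spells out in more detail the steps (a.e.\ vanishing off $\{g=\lambda\}$, positivity of the measure of that set) that the paper compresses into ``which is possible only if.''
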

\begin{proof}
We define a function $\tilde{g}=g-\operatorname{ess\,\inf} g$. 
Now $\operatorname{ess\,\inf} \tilde{g}=0$.
If 0 is an eigenvalue of $\mat{M}_n[\tilde{g}]$, then
for some non-zero $\mat{c}\in\mathbb{C}^{n+1}$, we have
$
0=\mat{c}^*\,\mat{M}_n[\tilde{g}]\,\mat{c}
$.
This is possible only if
\begin{equation*}
0=\int_{\Omega}\tilde{g}(\mat{x})\, 
\left\lvert
\sum_{i=0}^n c_i\,\phi_i(\mat{x})
\right\rvert^2\,w(\mat{x})\,d\mat{x},
\end{equation*}
which is possible only if 0 is an eigenvalue of $\mathsf{M}[\tilde{g}]$ and
$\phi=\sum_{i=0}^n c_i\,\phi_i$ 
is the corresponding eigenfunction. In that case,
$\operatorname{ess\,\inf} g$ is an eigenvalue of $\mathsf{M}[g]$ and
$\phi=\sum_{i=0}^n c_i\,\phi_i$ the corresponding eigenfunction.
The proof for $\operatorname{ess\,sup} g$ is similar to that for a function 
$\tilde{g}=\operatorname{ess\,sup} g - g$.
\end{proof}

For example, by \eqref{eq:eigenvalue}, a sufficient condition for
$\operatorname{ess\,inf} g$ not being an eigenvalue is that
\begin{equation*}
  \int_{\{\mat{x}\in\Omega\,:\,\operatorname{ess\,inf} g = g(\mat{x})\}} w(\mat{x})\,d\mat{x}=0
\end{equation*}
or that none of the basis functions is a characteristic function.
For another example, when $g(x)=x$ and the basis functions are polynomials,
we have the
well-known property of Gaussian quadrature that the endpoints are
not nodes of the Gaussian quadrature rule \cite[Theorem~1.46]{Gautschi:2004}.
For a Gaussian quadrature rule, 
this property follows from two facts: multiplication operator
$\operator{M}[g]$ does not have 
eigenvalues for $g(x)=x$,
and a finite-order polynomial cannot be a characteristic function. 

Once we have shown that the integrand of an improper integral cannot
have singularities at the eigenvalues of $\mat{M}_n[g]$, we
can use Jensen's operator inequality to show the
monotone growth of the approximations.
\begin{lemma}
  \label{lemma:operator-convex-monotone}
  Let $\operator{A}$ be a self-adjoint operator with an
  infinite matrix representation $\mat{A}_\infty$. Let $\mat{A}_\infty$
  and $\mat{v}\in\ell^2_0$ have finite approximations
  with elements
  $%
    [\mat{A}_n]_{i,j}=[\mat{A}_\infty]_{i,j}$ and $
    [\mat{v}_n]_i=[\mat{v}]_i
  $ %
  for $i,j=0,1,\ldots,n$.
  Let $f$ be an operator convex function on the interval $I$ that 
  contains
  the spectrum of $\mat{A}_\infty$.
  Then for $n$ more than the non-zero elements of $\mat{v}$, we have
  \begin{equation*}
    \mat{v}_n^*\,f(\mat{A}_n)\,\mat{v}_n\leq
    \mat{v}_{n+1}^*\,f(\mat{A}_{n+1})\,\mat{v}_{n+1}.
  \end{equation*}
\end{lemma}
\begin{proof}
  We define projection operators $\mat{P}_n$ as in \eqref{eq:projection}.
  First, we use Jensen's operator inequality \eqref{eq:jensen}
  with $s\in I$
  for operator
  \begin{equation*}
    \operator{B}=\mat{P}_{n+1}\,\mat{A}_\infty\,\mat{P}_{n+1}
    +s\,(\mat{I}_\infty-\mat{P}_{n+1})
    =
    \left[
      \begin{array}{cc}
        \mat{A}_{n+1}           & \mat{0}_{(n+2)\times\infty}\\
        \mat{0}_{\infty\times (n+2)} & s\,\mat{I}_\infty
      \end{array}
      \right].
  \end{equation*}
  We notice that $\operator{B}$ is a bounded operator in infinite dimensional
  Hilbert space and $\sigma(\operator{B})\subset I$. Therefore,
  Jensen's operator inequality \eqref{eq:jensen} applies.
  We also notice that
  \begin{align*}
    \left[
      \begin{array}{cc}
        \mat{A}_n             & \mat{0}_{(n+1)\times\infty}\\
        \mat{0}_{\infty\times (n+1)} & s\,\mat{I}_\infty \\
      \end{array}
      \right]
    &=
    \mat{P}_n
    \left[
      \begin{array}{cc}
        \mat{A}_{n+1}             & \mat{0}_{(n+2)\times\infty}\\
        \mat{0}_{\infty\times (n+2)} & s\,\mat{I}_\infty \\
      \end{array}
      \right]
    \,\mat{P}_n
    +s\,(\mat{I}_\infty-\mat{P}_n)\\
    &=
    \mat{P}_n\,\operator{B}\,\mat{P}_n
    +s\,(1-\mat{P}_n),
  \end{align*}
  and hence obtain
  \begin{align*}
    \left[
      \begin{array}{cc}
        f(\mat{A}_n)             & \mat{0}_{(n+1)\times\infty}\\
        \mat{0}_{\infty\times (n+1)} & \mat{0}_\infty \\
      \end{array}
      \right]
    &=
    \mat{P}_n\,
    \left[
      \begin{array}{cc}
        f(\mat{A}_n) & \mat{0}_{(n+1)\times\infty} \\
        \mat{0}_{\infty\times (n+1)} & f(s)\,\mat{I}_\infty \\
      \end{array}
      \right]
    \,\mat{P}_n\\
    &=
    \mat{P}_n\,
    f
    \left(
    \left[
      \begin{array}{cc}
        \mat{A}_n & \mat{0}_{(n+1)\times\infty} \\
        \mat{0}_{\infty\times (n+1)} & s\,\mat{I}_\infty \\
      \end{array}
      \right]
    \right)
    \,\mat{P}_n\\
    &=
    \mat{P}_n\,
    f(\mat{P}_n\,\operator{B}\,\mat{P}_n+
    s\,(\mat{I}_\infty-\mat{P}_n))\,\mat{P}_n\\
    & \qfleq
    \mat{P}_n\,f(\operator{B})\,\mat{P}_n\\
    &=
    \mat{P}_n\,
    \left[
      \begin{array}{cc}
        f(\mat{A}_{n+1})          & \mat{0}_{(n+2)\times\infty} \\
        \mat{0}_{\infty\times (n+2)} & f(s)\,\mat{I}_\infty \\
      \end{array}
      \right]
    \,\mat{P}_n\\
    &=
    \mat{P}_n\,
    \left[
      \begin{array}{cc}
        f(\mat{A}_{n+1})          & \mat{0}_{(n+2)\times\infty} \\
        \mat{0}_{\infty\times (n+2)} & \mat{0}_\infty \\
      \end{array}
      \right]
    \,\mat{P}_n,
  \end{align*}
  which implies 
  $\mat{v}_n^*\,f(\mat{A}_n)\,\mat{v}_n\leq
  \mat{v}_{n+1}^*\,f(\mat{A}_{n+1})\,\mat{v}_{n+1}$.
\end{proof}

With an unbounded version of Jensen's operator inequality
\eqref{eq:jensen},
we could also establish
$\mat{v}^*_n\,f(\mat{A}_n)\,\mat{v}_n\leq \mat{v}^*\,f(\mat{A}_\infty)\,\mat{v}$.
We do know that operator convex functions of positive bounded operators,
like $f(x)=x^{-1}$, are operator convex for positive unbounded operators
as well \cite[Theorem~4.3]{Dinh+Tikhonov+Veselova:2019}.

For further use of Lemma~\ref{lemma:operator-convex-monotone},
without an unbounded version of \eqref{eq:jensen},
we notice that function $f(x)=(x+s)^{-p}$ is operator
convex for any $p\in[0,1]$ and $s\geq 0$ because operator $\operator{B}+s$ is also a bounded
operator for any bounded operator $\operator{B}$ and $\sigma(\operator{B}+s)\in(0,\infty)$
if $\sigma(B)\in(0,\infty)$.
We need also some extension for the class of operator monotone functions.
\begin{lemma}
  Function $h(x)=-(x+s)^{-p}$ is operator monotone for positive unbounded self-adjoint operators for
  all $s\in (0,\infty)$ and $p\in[0,1]$.
\end{lemma}
\begin{proof}
  Function $h(x)=-(x+s)^{-p}$ is operator monotone on interval $[0,\infty)$ for
    any positive bounded operator $\operator{B}$
    because operator $s+\operator{B}$ is a positive bounded operator on interval
    $[s,\infty)\subset (0,\infty)$ and
      $\tilde{h}(x)=-x^{-p}$ is operator monotone on interval $(0,\infty)$.
      Function $h$ is then operator monotone also for all positive self-adjoint operators.

      For extending this result to the unbounded case, see the proof in
      \cite[Theorem~5]{Dinh+Tikhonov:2010}, which does not depend
      on the operator algebraic concepts of von Neuman algebra and affiliated
      operators mentioned in the statement of the theorem.
\end{proof}

This allows us to prove convergence for functions bounded by $x^{-p}$
for the following theorem.
\begin{theorem}
\label{thm:finite-improper-integral}
Let $p\in[0,1]$.
Let self-adjoint operator $\operator{A}$ have an infinite matrix representation
$\mat{A}_\infty$. Let $\mat{u}, \mat{v}\in\ell^2_0$ be such that
$\abs{\mat{u}^*$ $(\mat{A}_\infty - c\,\mat{I}_\infty)^{-p}\,\mat{v}} <\infty$.
Let function $f$ have a singularity at endpoint $c$ of the spectrum of
$\mat{A}_\infty$
such that $\abs{f(x)}\leq a + \frac{b}{\abs{x-c}^p}$ and $\mat{A}_\infty$ has no 
eigenvalue and eigenvector pair such that the endpoint $c$ is the eigenvalue
and the corresponding eigenvector is in $\ell^2_0$. 
Let finite
approximations be
$%
  [\mat{A}_n]_{i,j}=[\mat{A}_\infty]_{i,j}$,
  $[\mat{u}_n]_i = [\mat{u}]_i$,
  $[\mat{v}_n]_i = [\mat{v}]_i$
for $i,j=0,1,\ldots,n$. Then
$
  \mat{u}_n^*\,f(\mat{A}_n)\,\mat{v}_n\rightarrow
  \mat{u}^*\,f(\mat{A}_\infty)\,\mat{v}
  $
  as $n\rightarrow\infty$.
\end{theorem}
\begin{proof}
  It is enough to consider convergence for
  $  \mat{v}_n^*\,f(\mat{A}_n)\,\mat{v}_n\rightarrow
  \mat{v}^*\,f(\mat{A}_\infty)\,\mat{v}$ when $\mat{v}\in\ell^2_0$. 
  The more general convergence follows
  from the polar decomposition
  \begin{align*}
    4\,\mat{u}^*\,f(\mat{A}_\infty)\,\mat{v} 
    = &
    (\mat{u}+\mat{v})^*\,f(\mat{A}_\infty)\,(\mat{u}+\mat{v})
    -(\mat{u}-\mat{v})^*\,f(\mat{A}_\infty)\,(\mat{u}-\mat{v})\\
    &
    +i\,(\mat{u}+i\,\mat{v})^*\,f(\mat{A}_\infty)\,(\mat{u}+i\,\mat{v})
    -i\,(\mat{u}-i\,\mat{v})^*\,f(\mat{A}_\infty)\,(\mat{u}-i\,\mat{v}).
  \end{align*}
  If $\mat{u},\mat{v}\in\ell^2_0\cap\vectorspace{D}(\sqrt{\abs{f(\mat{A}_\infty)}})$, then also
  $\mat{u}+\mat{v},\mat{u}-\mat{v},\mat{u}+i\,\mat{v},
  \mat{u}-i\,\mat{v}\in\ell^2_0\cap\vectorspace{D}(\sqrt{\abs{f(\mat{A}_\infty)}})$.

  It is also enough to show convergence 
  when the lower endpoint of $\sigma(\mat{A}_\infty)$ is 0,
    and $\mat{A}_\infty$ has no eigenvalue at 0 such that the corresponding
    eigenvector is in $\ell^2_0$. Then we can select
    $h(x)=x^{-p}$, which is operator convex function on $(0,\infty)$
    and $-h(x)$ is operator monotone on $(0,\infty)$. Convergence
    for more general $\tilde{h}(x)=a+\frac{b}{\abs{x-c}^p}$ follows by simple
    transformations.

    We define a sequence of functions for $s> 0$ as
    \begin{equation*}
      a_n(s)=\mat{v}^*\,
      h\left(
      \left[
        \begin{array}{cc}
          \mat{A}_n & \mat{0}_{(n+1)\times\infty}\\
          \mat{0}_{\infty\times (n+1)} & \mat{0}_\infty
          \end{array}
        \right]+s
      \right)
      \,\mat{v}.
    \end{equation*}
    Functions $a_n(s)$ are bounded for all values of $n<\infty$ and
    $s> 0$. 
    By lemma \ref{lemma:operator-convex-monotone}, sequence $a_n(s)$ is 
    monotone increasing when $n$ is greater than the number of 
    non-zero elements in $\mat{v}$. Functions $a_n(s)$ are monotone decreasing
    as functions of $s$ because $-h(x+s)$ is operator monotone on
    $[0,\infty)$ also for unbounded positive self-adjoint infinite
      matrices and 
    \begin{equation*}
      \left[
        \begin{array}{cc}
          \mat{A}_n & \mat{0}_{(n+1)\times\infty}\\
          \mat{0}_{\infty\times (n+1)} & \mat{0}_\infty
        \end{array}
        \right]+s\qfleq
     \left[
       \begin{array}{cc}
         \mat{A}_n & \mat{0}_{(n+1)\times\infty}\\
         \mat{0}_{\infty\times (n+1)} & \mat{0}_\infty
       \end{array}
       \right]+r,
    \end{equation*}
    when $s\leq r$. We further define a double sequence $a_{n,m}=a_n(m^{-1})$ which is monotone increasing in
    both $n$ and $m$.
    By \cite[Theorem~4.2]{Habil:2006}, we can change the order of the limits for $n$ and $m$ and have
    \begin{align*}
      \lim_{n\rightarrow\infty}\mat{v}_n^*\,h(\mat{A}_n)\,\mat{v}_n
      &=
      \lim_{n\rightarrow\infty}\lim_{m\rightarrow \infty} a_{n,m}
      =
      \lim_{m\rightarrow \infty}\lim_{n\rightarrow\infty} a_{n,m}
      =
      \lim_{m\rightarrow \infty}\mat{v}^*\,h(\mat{A}_\infty+m^{-1})\,\mat{v}\\
      &=
      \mat{v}^*\,h(\mat{A}_\infty)\,\mat{v},
    \end{align*}
    because $h(\mat{A}_\infty+m^{-1})$ is a bounded operator.
    Convergence for $\abs{f(x)}\leq \tilde{h}(x)=a+\frac{b}{\abs{x-c}^p}$ follows in a similar
    way as the proof of theorem \ref{thm:dominated-convergence}.
\end{proof}

For an improper numerical integral on a finite endpoint,
we can present the following
theorem.
\begin{theorem}
  \label{thm:end-point-singularity}
  Let orthonormal functions $\phi_0=1,\phi_1,\phi_2,\ldots$ be dense
  in $\vectorspace{L}_w^2(\Omega)$. Let a function $g$ satisfy
  \eqref{eq:mult-mat}
  and let matrices $\mat{M}_n[g]$ have
  elements as in \eqref{eq:matrix-elements}.
  Let $c$ be such $\operatorname{ess\,sup} g$ or $\operatorname{ess\,inf} g$
  that it is not an eigenvalue of $\mat{M}_n[g]$, that is, $c$ is not an
  eigenvalue of $\operator{M}[g]$, or if it is, then the corresponding
  eigenfunction is not a finite linear combination of the basis functions
  $\phi_i$.
  Let a function $f$ be Riemann--Stieltjes integrable with respect to
  the spectral family of $\operator{M}[g]$ on any finite interval
  not containing $c$.
  Let $f$ be  bounded so that
  $\abs{f(x)}\leq a+\frac{b}{\abs{x-c}^p}$ for some $a,b\geq 0$ and $0\leq p\leq 1$
  that satisfy
  \begin{equation*}
    \int_\Omega
    \left(
    a+\frac{b}{\abs{g(\mat{x})-c}^p}
    \right)
    \,w(\mat{x})\,d\mat{x}<\infty.
  \end{equation*}
  Then
  \begin{equation*}
    \lim_{n\rightarrow\infty}[f(\mat{M}_n[g])]_{0,0}=
    \int_\Omega f(g(\mat{x}))\,w(\mat{x})\,d\mat{x}.
  \end{equation*}
\end{theorem}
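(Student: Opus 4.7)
The plan is to reduce the statement directly to Theorem~\ref{thm:finite-improper-integral} by choosing the operator as $\operator{A} = \operator{M}[g]$, the self-adjoint operator with infinite matrix representation $\mat{M}_\infty[g]$ with respect to the orthonormal basis $\{\phi_i\}_{i=0}^\infty$, and the vectors as $\mat{u} = \mat{v} = \mat{e}_0$, so that $\psi_1 = \psi_2 = \phi_0 = 1$. The matrix $[f(\mat{M}_n[g])]_{0,0}$ that we want to analyse is then precisely $\mat{e}_0^\top f(\mat{M}_n[g])\,\mat{e}_0 = \mat{v}_n^* f(\mat{A}_n)\,\mat{v}_n$, and the target value $\int_\Omega f(g(\mat{x}))\,w(\mat{x})\,d\mat{x} = \langle 1, f(\operator{M}[g])\,1\rangle = \langle \psi_1, f(\operator{A})\,\psi_2\rangle$, matching the conclusion of Theorem~\ref{thm:finite-improper-integral} exactly.

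The verification then amounts to checking the hypotheses. First, the denseness of the $\phi_i$ in $\vectorspace{L}^2_w(\Omega)$ together with condition \eqref{eq:mult-mat} ensures that $\operator{M}[g]$ admits the infinite matrix representation $\mat{M}_\infty[g]$ whose leading principal submatrices are the $\mat{M}_n[g]$. Second, the spectral/essential-range interpretation gives $c \in \{\operatorname{ess\,inf} g, \operatorname{ess\,sup} g\} \subset \sigma(\operator{M}[g])$, and the hypothesis that $c$ is not an eigenvalue of $\mat{M}_n[g]$ is exactly the translation, via Theorem~\ref{Thm:endpoints}, of the non-eigenvector-in-$\ell^2_0$ condition required by Theorem~\ref{thm:finite-improper-integral}: either $\operator{M}[g]$ has no eigenvalue at $c$ at all, or its eigenspace contains no finite linear combination $\sum_{i=0}^n c_i\,\phi_i$, and this last statement is precisely the absence of an eigenvector in $\ell^2_0$.

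Third, the bound $|f(x)| \leq a + b/|x-c|^p$ with $0\leq p\leq 1$ has the required form, and Riemann--Stieltjes integrability of $f$ with respect to the spectral family of $\operator{M}[g]$ on any finite interval not containing $c$ is inherited from the hypothesis. The integrability condition
\begin{equation*}
\int_\Omega \left(a + \frac{b}{|g(\mat{x})-c|^p}\right) w(\mat{x})\,d\mat{x} < \infty
\end{equation*}
is just the spectral integral $\langle 1, (a + b\,|\operator{M}[g] - c|^{-p})\cdot 1\rangle < \infty$ expressed via the defining relation $\langle 1, h(\operator{M}[g])\,1\rangle = \int_\Omega h(g(\mat{x}))\,w(\mat{x})\,d\mat{x}$ applied to $h(x) = a + b/|x-c|^p$, so the dominating integral hypothesis of Theorem~\ref{thm:finite-improper-integral} is satisfied for $\psi_1 = \psi_2 = 1$.

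With all hypotheses in hand, Theorem~\ref{thm:finite-improper-integral} yields $\mat{v}_n^* f(\mat{A}_n)\,\mat{v}_n \to \langle 1, f(\operator{M}[g])\,1\rangle$, which is the desired limit. The only nontrivial step I expect to justify carefully is the identification of the two non-eigenvalue conditions via Theorem~\ref{Thm:endpoints}, where one must be explicit that ``the corresponding eigenfunction is not a finite linear combination of the basis functions $\phi_i$'' translates to the absence of an $\ell^2_0$ eigenvector for $\mat{M}_\infty[g]$ at $c$ under the isomorphism between $\vectorspace{D}(\operator{M}[g])$ and $\vectorspace{D}(\mat{M}_\infty[g])$; everything else is routine bookkeeping.
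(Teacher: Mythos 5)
Your proposal is correct and follows exactly the route the paper takes: the paper's own proof is the one-line reduction ``This follows from Theorem~\ref{thm:finite-improper-integral},'' with the same choices $\operator{A}=\operator{M}[g]$ and $\mat{u}=\mat{v}=\mat{e}_0$ that you make explicit. Your careful verification of the hypotheses (in particular the translation of the eigenvalue condition via Theorem~\ref{Thm:endpoints} and the identification of the dominating integral with $\langle 1,(a+b\,|\operator{M}[g]-c|^{-p})\,1\rangle$) simply spells out the bookkeeping the paper leaves implicit.
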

\begin{proof}
This follows from Theorem~\ref{thm:finite-improper-integral}.
\end{proof}

\section{Numerical examples}
\label{sec:numerical-example}

As an example of convergence, we consider integration on the
interval $[0,\infty)$
  with exponential weight function $w(x)=e^{-x}$ and orthonormal
  polynomials (Laguerre polynomials) as the orthonormal basis functions.
  As an example, we use
  inside functions 
  $g_1(x)=\sqrt{x}$, $g_2(x)=x$, $g_3(x)=x^{3/2}$ and $g_4(x)=x^2$.
  
  The inside function $g_2$ corresponds to Gauss--Laguerre quadrature and
  has the well-known good convergence properties of Gaussian quadrature
  \cite{Uspensky:1928,Jouravsky:1928,Shohat+Tamarkin:1963,Freud:1971,Bultheel+Diaz-Mendoza+Gonzalez-Vera+Orive:2000}.
  The inside functions $g_1,g_2,g_3,g_4$ satisfy condition
  \eqref{eq:mult-mat}.
  Matrix
  $\mat{M}_\infty[g_2]$ is the tridiagonal Jacobi matrix for the Laguerre
  polynomials. Its elements are all nonnegative. We can see
  this by looking at the exact formula for the recurrence coefficients
  \cite[Table~1.1 and Definition~1.30]{Gautschi:2004} as well as in the first
  few matrix
  coefficients \eqref{eq:Laguerre-system-Jacobi-matrix}.
  Also, $\mat{M}_\infty[g_4]$ has nonnegative elements because it
  is a square of a matrix $\mat{M}_\infty[g_2]$ that has only nonnegative
  elements. On the contrary, matrices $\mat{M}_\infty[g_1]$ and 
  $\mat{M}_\infty[g_3]$ have both positive
  and negative elements, as we can see from the first few matrix coefficients
    \begin{align*}
      \mat{M}_2[g_1] &=
      \sqrt{\pi}\,
    \left[
      \begin{array}{cccc}
   \frac{1}{2} &       \frac{1}{4} &       -\frac{1}{16}\\
   \frac{1}{4} &   \frac{7}{8} &   \frac{11}{32} \\
   -\frac{1}{16} & \frac{11}{32} & \frac{145}{128}
      \end{array}
      \right],      \\
      \mat{M}_3[g_3] &=
      \sqrt{\pi}\,
    \left[
      \begin{array}{cccc}
        \frac{3}{4} & \frac{9}{8} & \frac{9}{32} & -\frac{3}{64}\\
        \frac{9}{8} & \frac{57}{16} & \frac{207}{64} & \frac{81}{128}\\
        \frac{9}{32} & \frac{207}{64} & \frac{1947}{256} & \frac{3051}{512}\\
        -\frac{3}{64} & \frac{81}{128} & \frac{3051}{512} & \frac{12873}{1024}
      \end{array}
      \right].
  \end{align*}

    We test convergence with two test functions $f_1(x)=\sin(\sqrt{x})$
    and $f_2(x)=\frac{\exp(x)}{1+x^2}$. 
    The first one is bounded, and the
    second one is a very fast-growing function. 

    For comparison, we formulate the numerical integral as
    \begin{equation}
      \label{eq:basic-example}
      \int_0^\infty f_i(x)\,w(x)\,dx\approx[f_i(g_j^{-1}(\mat{M}_n[g_j]))]_{0,0},
    \end{equation}
    where $g_j^{-1}$ exists for all $g_1,g_2,g_3,g_4$. This way, we get four 
    different quadrature rules for approximating the integral.
    We compute
    the finite matrix approximation of the multiplication operator symbolically
    as described in \cite[Remark~1]{Sarmavuori+Sarkka:2019}
    and the
    eigenvalue decomposition numerically with 64-bit IEEE 754 floating point
    numbers. The reason for using symbolic computation until the eigenvalue
    decomposition is that we also have the Jacobi matrix for the
    Gauss--Laguerre quadrature
    in closed-form. This allows us to compare quadratures up to
    way more than 30 nodes. With purely 64-bit floating point computation,
    the method would work for up to only 10 to 15 nodes. This is significantly
    improved up to 15 to 20 nodes
    if centralised moments are used instead of the raw ones.

    The results of the numerical approximation are presented in Figures
    \ref{fig:ex1-error}, \ref{fig:ex-2}, and \ref{fig:ex-3}.
    The example for
    a bounded function in Figure~\ref{fig:ex1-error} demonstrates convergence
    for all the inside functions as expected by 
    Theorem~\ref{thm:quadratic-convergence}.
\begin{figure}
\centering
    \begin{subfigure}[b]{0.485\textwidth}            
            \includegraphics[width=\textwidth]{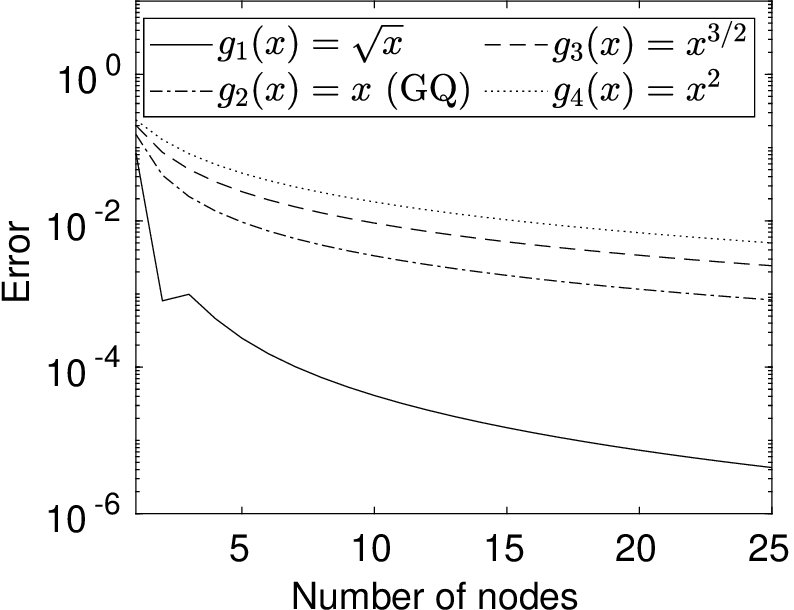}
            \caption{Approximation error.}
            \label{fig:ex1-error}
    \end{subfigure}%
    \quad
    \begin{subfigure}[b]{0.485\textwidth}
            \centering
            \includegraphics[width=\textwidth]{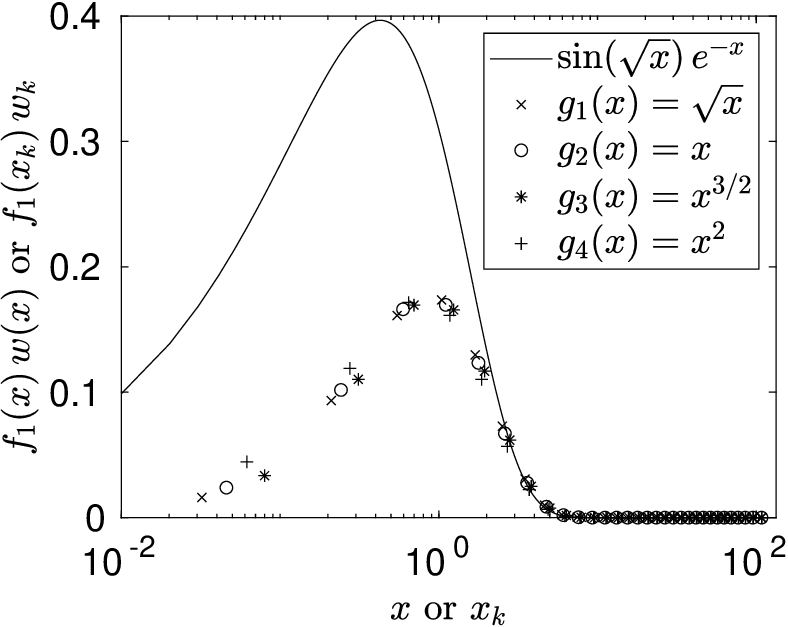}
            \caption{        Location of the nodes and weighted values of the
        function at the nodes.}
            \label{fig:ex1-nodes}
    \end{subfigure}
    \caption{
      Bounded outside function $f_1(x)=\sin(\sqrt{x})$ and the inside functions $g_1,g_2,g_3,g_4$.
    }\label{fig:ex-1}
\end{figure}

    Convergence is not very fast in Figure~\ref{fig:ex1-error}.
    For understanding the reason better, we can take a look at the
    location of the nodes and how much each node contributes to
    the numerical integral. In Figure~\ref{fig:ex1-nodes},
    we see each node located on the horizontal axis.
    On the vertical axis, we show the value $f_1(x_k)\,w_k$ which tells
    how much the term contributes to the total sum
    $\sum_{k=0}^{30} f_1(x_k)\,w_k$. For comparison, also the
    function graph is shown weighted by $w(x)$. The graph and the weights
    are not directly comparable so that it would be a
    better fit when the points $f_1(x_k)\,w_k$ are located
    on the graph.
    When nodes are further away from each other, the weight
    should be higher compared to a situation where nodes are close
    to each other.
    Notice also that the $x$-axis is on a logarithmic scale to show
    the interesting area of the function more clearly. This 
    further distorts the effect that values $f_1(x_k)\,w_k$ seem far off
    from the curve $f_1(x)\,w(x)$.
    However, the curve of $f_1(x)\,w(x)$ does show the area
    where the $f_1(x_k)\,w_k$ points should be higher assuming that the
    distance between the nodes does not vary very much.

    Figure~\ref{fig:ex1-nodes} shows that only 7 or 8 nodes are
    located in the area where $f_1(x)\,w(x)$ is high. The rest of the
    31 nodes are located in low area.
    This is quite natural because the support of the integral is
    $[0,\infty)$.
    Because the nodes have the interleaving
    property \cite[Theorem~2]{Sarmavuori+Sarkka:2019},
    increasing the number of nodes from 31 to 32 is going to add at most
    one more node in the interesting area. The majority of the nodes are going to
    have a very small contribution to the numerical integral.
    
    For the fast-growing outside function in Figure~\ref{fig:ex2-error},
    we have different results.
    For $g_1$ and $g_3$, we have divergence starting before 15 nodes. 
    This is in line
    with the fact that we do not have a proof of convergence for $g_1$ 
    and $g_3$, unlike
    for $g_2$ and $g_4$.
    The absence of proof, of course, does not mean that the numerical
    integral is automatically divergent. We do not have a proof that this
    is diverging either.
    In principle, it could be convergent in the
    end but just having the error growing very large for a large number
    of nodes until convergence would happen. This temporary
    large error might become too large for the floating point representation.

    The only theorem that we have been able to prove which applies to
    $g_1$ and $g_3$ is Theorem~\ref{thm:quadratic-integral-convergence},
    which says that for $g_1(x)=\sqrt{x}$, $f$ would have to be linearly
    bounded,
    and for $g_3(x)=x^{\frac{3}{2}}$, the bound would have to be
    $\abs{f(x)}\leq a+x^3$ for some $a>0$.
    The true bound where convergence starts to fail is probably
    somewhere between these quite slowly growing functions and
    the example function that is almost too fast-growing function to be
    integrable at all.

    Another possible explanation for the divergence
    could be numerical instability. In order to test this hypothesis,
    we computed the eigenvalue decomposition and the numerical integral
    with variable precision for
    $g_1$ on 11 nodes where the divergence has already started, and
    the approximation error is $0.2323$. The difference between the
    floating point and the variable precision calculation is
    $1.7170\cdot 10^{-16}$ for both 32- and 64-digit variable precision.

    Convergence for $g_2$ follows from the well-known
    results
    \cite{Uspensky:1928,Jouravsky:1928,Shohat+Tamarkin:1963,Freud:1971,Bultheel+Diaz-Mendoza+Gonzalez-Vera+Orive:2000}. 
    Convergence for
    $g_4$ follows from Theorem~\ref{thm:power-series-convergence} because
    \begin{align*}
      f_2(g_4^{-1}(x))
      &=
      \frac{e^{\sqrt{x}}}{1+x}
      \leq 
      \frac{e^{\sqrt{1+x}}+e^{-\sqrt{1+x}}}{1+x}
      =
      2\,\frac{\cosh(\sqrt{1+x})}{1+x}\\
      &=
      \frac{2}{1+x}+
      2\sum_{k=0}^\infty \frac{1}{(2(k+1))!}\,(1+x)^k.
      \end{align*}    
\begin{figure}
\centering
    \begin{subfigure}[b]{0.485\textwidth}            
            \includegraphics[width=\textwidth]{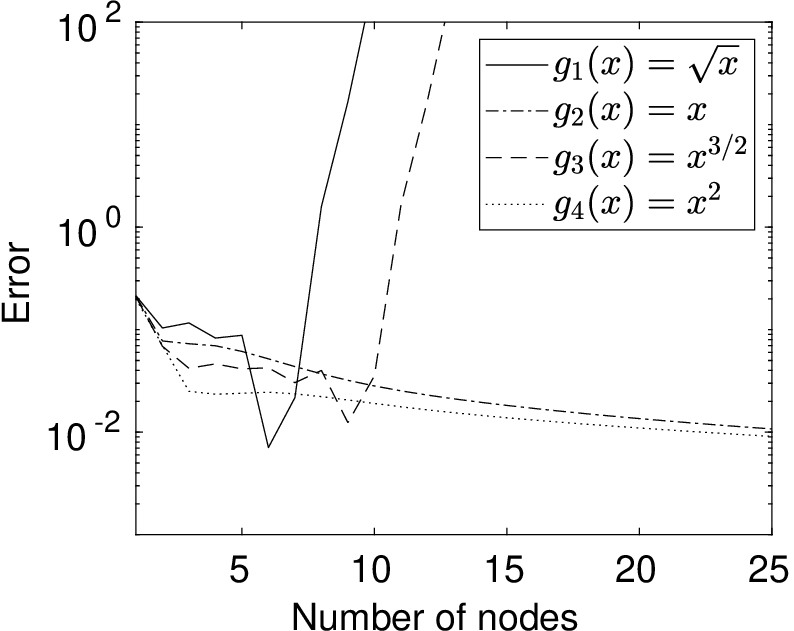}
            \caption{Weighting function $h_1=1$}
            \label{fig:ex2-error}
    \end{subfigure}%
    \quad
    \begin{subfigure}[b]{0.485\textwidth}
            \centering
            \includegraphics[width=\textwidth]{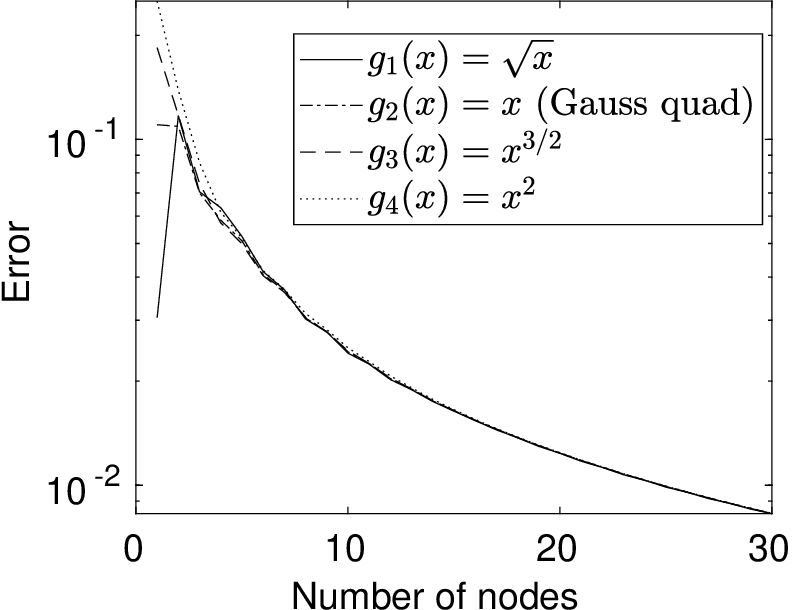}
            \caption{Weighting function 
              $h_2(x)=\frac{\exp(x/2)}{(1+x^2)^{3/8}}$
            }
            \label{fig:ex3-error}
    \end{subfigure}
    \caption{
Approximation error for fast-growing outside function 
    $f_2(x)=\frac{\exp(x)}{1+x^2}$ and
inside functions $g_1,g_2,g_3,g_4$ with weighting function $h_1=1$ and
$h_2(x)=\frac{\exp(x/2)}{(1+x^2)^{3/8}}$.
    }\label{fig:ex-2}
\end{figure}

    We can make a quadrature rule based on the inside function $g_1$ or $g_3$
    convergent for the outside function $f_2$ by
    Theorem~\ref{thm:reweighted-convergence}. We select a function
    $h$ of Theorem~\ref{thm:reweighted-convergence} as
    $h_2(x)=\frac{\exp(x/2)}{(1+x^2)^{3/8}}$. The function
    $r(x)=\frac{f_2(x)}{h_2^2(x)}$ is bounded, and we can
    formulate the integral as
    \begin{equation*}
      \int_0^\infty f_2(x)\,w(x)\,dx=
      \mat{v}^\top\,r(g_j^{-1}(\mat{M}_\infty[g_j]))\,\mat{v}
      \approx
      \mat{v}_n^\top\,r(g_j^{-1}(\mat{M}_n[g_j]))\,\mat{v}_n,
    \end{equation*}
    where $\mat{v}\in\ell^2$ has components
    $%
      [\mat{v}]_i=\langle h_2,\phi_i\rangle=
      \int_0^\infty h_2(x)\,\phi_i(x)\,w(x)\,dx
    $, %
    and $\mat{v}_n$ is a truncation of $\mat{v}$ up to component $n$.
    The approximation in \eqref{eq:basic-example} is also a special case
    of this more general approach with function $h$ selected as $h_1=1$.
    Functions $h_1$ and $h_2$ do not affect the placement of the
    nodes but only the weights of the numerical integration as
    given in \eqref{eq:general-weights}.
    Figure~\ref{fig:ex3-error} shows that with weight function $h_2$,
    this approach is convergent
    also for the inside functions $g_1$ and $g_3$.

    We can again look at the location of the nodes and corresponding
    values of $f_2(x_k)\,w(x_k)$ in Figure~\ref{fig:ex3-nodes},
    which shows that the weights for large values of $x_k$ are
    very high for $g_1$ and $g_3$. The location of the nodes is not
    very different for all of the quadratures.

    \begin{figure}[htbp]
      \centering
      \includegraphics[scale=0.71]{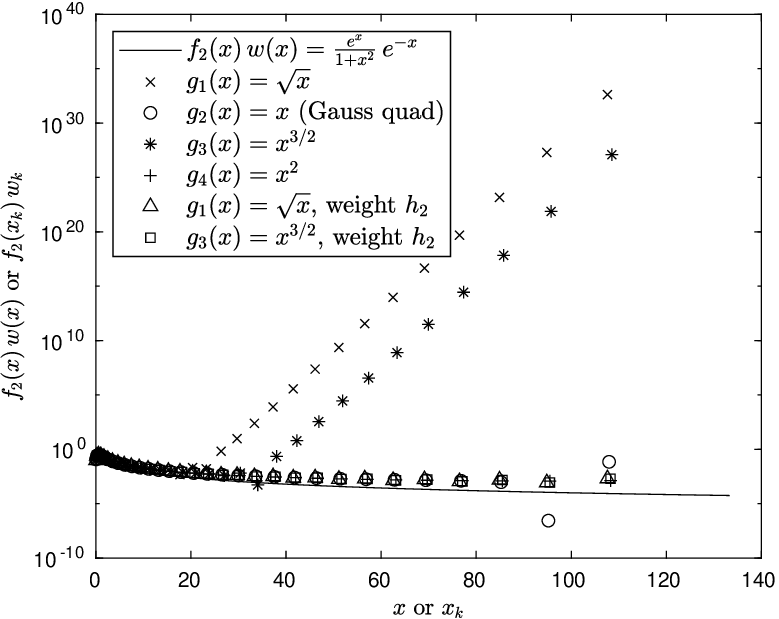}
      \caption{
        \label{fig:ex3-nodes}
        Location of the nodes and weighted value of the
        function for them.
      }
    \end{figure}

    Finally, we note that using $h_2$ instead of $h_1$ does not universally 
    improve
    convergence as shown in Figure~\ref{fig:ex4-error}. The results for
    inside functions $g_3,g_4$ are similar to that for $g_2$.
\begin{figure}
\centering
    \begin{subfigure}[b]{0.485\textwidth}            
            \includegraphics[width=\textwidth]{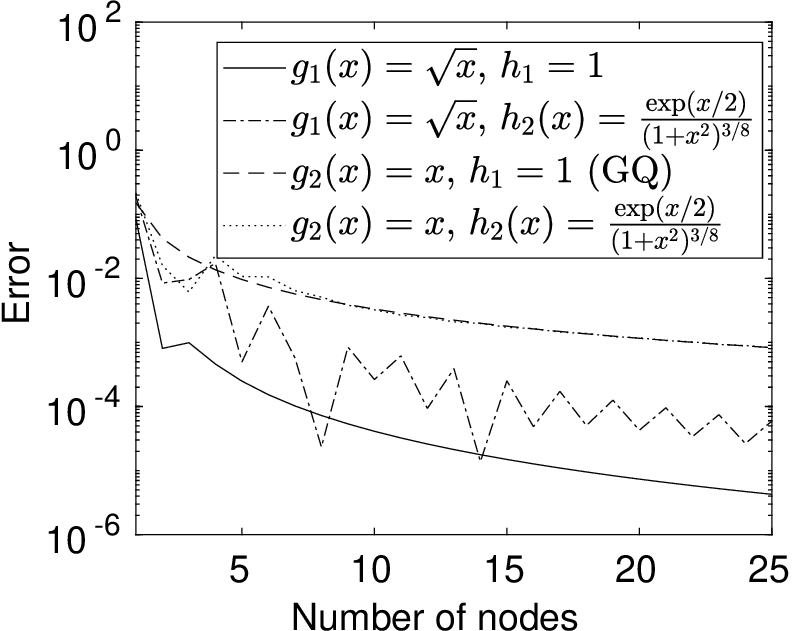}
            \caption{Approximation error for the bounded outside function
        $f_1$ and
        the inside functions $g_1,g_2$ with weighting functions 
        $h_1,h_2$.}
            \label{fig:ex4-error}
    \end{subfigure}%
    \quad
    \begin{subfigure}[b]{0.485\textwidth}
            \centering
            \includegraphics[width=\textwidth]{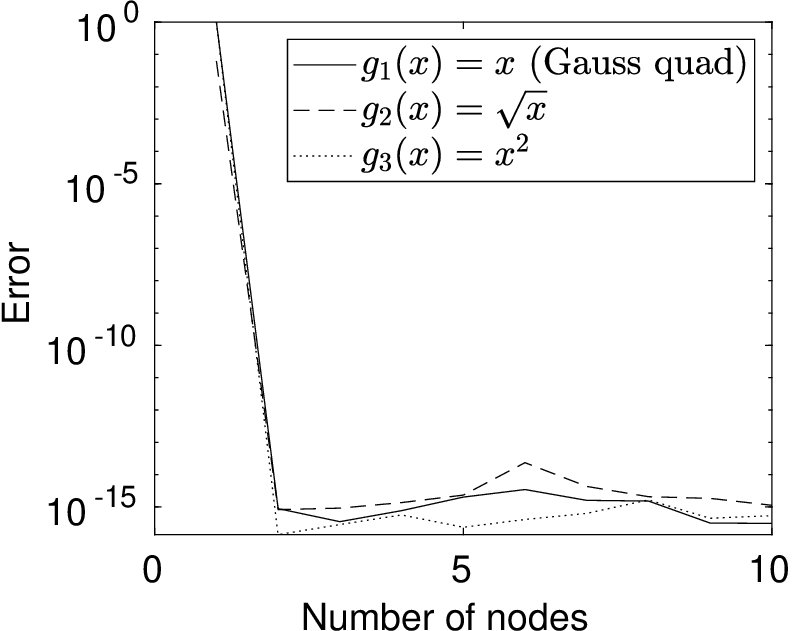}
            \caption{Approximation error for Thomae's function
              \eqref{eq:thomaes}
              using quadratures based on the inner functions $g_1,g_2,g_3$.
            }
            \label{fig:ex-thomaes}
    \end{subfigure}
    \caption{
      Approximation error for bounded outside functions $f_1(x)=\sin(\sqrt{x})$
      and Thomae's function \eqref{eq:thomaes}.
    }\label{fig:ex-3}
\end{figure}

    As an example of a discontinuous function, we use
    Thomae's function
    \begin{equation}
      \label{eq:thomaes}
      f(x)=
      \left\{
      \begin{array}{ll}
        \frac{1}{q} & \mathrm{if}~x=\frac{p}{q}~\,\mathrm{for~coprime}~p,q\in\mathbb{Z}^+,\\
        0 & \mathrm{otherwise}.
      \end{array}
      \right.
    \end{equation}
    The true value of the integral for Thomae's function
    is 0 \cite[Example~7.1.6]{Bartle+Sherbert:2011}.
    The results are shown in Figure~\ref{fig:ex-thomaes}.
    Convergence is very fast, although the function value
    at every node is nonzero. The true value of the node could be
    an irrational number, which would make convergence even faster.
    However, we define the node to be an approximation
    to the eigenvalue in floating point representation which is always
    a rational number. Even in that case,
    the nonzero values are so
    small that the integral quickly converges close to 0.
    
    As an example of a multidimensional integral, we consider
    the following integral
    \begin{equation}
      \label{eq:nd-example}
      I=\int_{\mathbb{R}^3} \frac{1}{\|\mat{x}\|^2}\,w(\mat{x})\,d\mat{x},
    \end{equation}
    where the weight function is
    Gaussian
    $w(\mat{x})=\pi^{-\frac{3}{2}}\,e^{-\|\mat{x}\|^2}$.
    We select functions $f$ and $g$ in  \eqref{eq:integral} as
    $f(x)=\frac{1}{x}$ and $g(\mat{x})=\|\mat{x}\|^2$.
    This turns the integration problem into
    a linear algebra problem. The approximation for \eqref{eq:nd-example}
    is given by $I\approx\mat{e}_0^\top\,\mat{M}_n[g]^{-1}\,\mat{e}_0$.
    Function $f$ is singular at 0 and the whole integrand
    at $\mat{0}$. We expect convergence based on
    Theorem~\ref{thm:end-point-singularity}, and convergence
    should be monotone based on Lemma~\ref{lemma:operator-convex-monotone}.

    We get the matrix elements for the function of the multi-dimensional
    variable from the elements of a matrix for a one-dimensional variable.  
    We start with the Jacobi matrix for the Hermite system, that is, the
    orthogonal system with weight function $w(x)=e^{-x^2}$.

    The Jacobi matrix is symmetric and tridiagonal.
    We can write a general symmetric tridiagonal infinite matrix as
    \begin{equation}
      \label{eq:tridiagonal-matrix}
      \left[
        \begin{array}{ccccccc}
          u_0 & v_0     &        &        &        &       & 0 \\
          v_0 & u_1     & v_1    &        &        &       &    \\
              & \ddots & \ddots & \ddots &        &        &   \\
              &        & v_{k-1} & u_k    & v_k     &        &   \\
              &        &        & \ddots & \ddots  & \ddots &   \\
          0   &        &        &        &         &       &
        \end{array}
        \right].
    \end{equation}
    For the Hermite system, the Jacobi matrix coefficients
    in \eqref{eq:tridiagonal-matrix} are given by $u_k=0$ and
    $v_k=\sqrt{\frac{1}{2}\,(k+1)}$ for $k=0,1,\ldots$
    \cite[Table~1.1 and Definition~1.30]{Gautschi:2004}.
    Thus, these are the coefficients of infinite matrix
    $\mat{M}_\infty[\operatorname{id}]$ for function $\operatorname{id}(x)=x$ in one dimension.

    A second power of $\mat{M}_\infty[\operatorname{id}]$ is
    a 5-diagonal infinite matrix with 2 side diagonals
    \begin{equation*}
      \label{eq:5-diagonal-matrix}
      \mat{M}_\infty[\operatorname{id}]^2=
      \left[
        \begin{array}{cccccccccc}
          a_0 & b_0     & c_0    &        &        &        &         &       & 0 \\
          b_0 & a_1     & b_1    & c_1    &        &        &         &        & \\
          c_0 & b_1     & a_2    & b_2    & c_2    &        &         &        & \\
              & \ddots & \ddots & \ddots & \ddots & \ddots &        &        & \\
              &        & c_{k-2} & b_{k-1} & a_k    & b_k     & c_k     &        & \\
              &        &        & \ddots & \ddots & \ddots & \ddots & \ddots & \\
          0   &        &        &        &        &        &        &        &
        \end{array}
        \right]
      =
      \left[
        \begin{array}{ccccccc}
          0 & v_0     &        &        &        &       & 0 \\
          v_0 & 0     & v_1    &        &        &       &    \\
              & \ddots & \ddots & \ddots &        &        &   \\
              &        & v_{k-1} & 0    & v_k     &        &   \\
              &        &        & \ddots & \ddots  & \ddots &   \\
          0   &        &        &        &         &       &
        \end{array}
        \right]^2,
    \end{equation*}
    where
      $a_k = v_{k-1}^2+v_k^2=k+\frac{1}{2}$,
      $b_k = 0$, and
      $c_k = v_k\,v_{k+1}=\frac{1}{2}\,\sqrt{(k+1)\,(k+2)}$.

    In the following notation, we use multi-indices as matrix indices.
    In computation, we use these multi-indices in graded lexicographic order \cite[Chapter~3.1]{Dunkl+Xu:2014}.
    The monomials of a multidimensional variable are $\mat{x}^{\mat{\alpha}}=\prod_{i=0}^{d-1} x_i^{\alpha_i}$ for $\mat{x}\in\mathbb{R}^d$ and
    $\mat{\alpha}\in\mathbb{N}_0^d$. Similarly, we can index the orthonormal polynomials with multi-indices as
    $\phi_{\mat{\alpha}}(\mat{x}) =\prod_{i=0}^{d-1}\phi_{\alpha_i}(x_i)$, where $\phi_i$ are the one-dimensional orthonormal polynomials.
    For matrix elements, the definition is
    $\mat{M}_\infty[g]_{\mat{\alpha},\mat{\beta}}=\langle \phi_{\mat{\alpha}},g\,\phi_{\mat{\beta}}\rangle$.

    For $g(\mat{x})=\|\mat{x}\|^2$, we get a matrix which has one main diagonal of elements on
    $(\mat{\alpha},\mat{\alpha})$ and several side diagonal like sets
    of elements on $(\mat{\alpha},\mat{\alpha}+2\,\mat{e}_i)$ for
    $i=0,1,2$. These sets are symmetric, that is,
    $\mat{M}_\infty[g]_{\mat{\alpha},\mat{\alpha}+2\,\mat{e}_i}=\mat{M}_\infty[g]_{\mat{\alpha}+2\,\mat{e}_i,\mat{\alpha}}$.
    The diagonal matrix elements are
    \begin{align*}
      \mat{M}_\infty[g]_{\mat{\alpha},\mat{\alpha}}
      &=\frac{3}{2}+\sum_{i=0}^{2}\alpha_i.
    \end{align*}
    The side diagonal elements are given for $i=0,1,2$ as
    \begin{align*}
      \mat{M}_\infty[g]_{\mat{\alpha},\mat{\alpha}+2\,\mat{e_i}}
      &=
      \mat{M}_\infty[g]_{\mat{\alpha}+2\,\mat{e_i},\mat{\alpha}}=\frac{1}{2}\,\sqrt{(\alpha_i+1)\,(\alpha_i+2)}.
    \end{align*}
    Otherwise, the elements are 0.

    For numerical comparison, we compute the numerical integral
    $\mat{e}_0^\top\,\mat{M}_n[g]^{-1}\,\mat{e}_0$ with different
    values of the total order of the polynomials
    $\abs{\mat{\alpha}}=\sum_{i=0}^{d-1}\alpha_i=1,2,\ldots,80$.
    The size of the matrix $\mat{M}_n[g]$ depends on
    the total order as $\binom{\abs{\mat{\alpha}}+3}{3}$
    \cite[Chapter~3.1]{Dunkl+Xu:2014}.
    At maximum, we make the computation for a matrix of size
    $\binom{80+3}{3}=91881$.
    We can compare this to the true value of \eqref{eq:nd-example}
    that we can find
    in the spherical coordinates as
      $I=\frac{4}{\sqrt{\pi}}\,\int_0^\infty e^{-r^2}\,dr=2$.
    Figure~\ref{fig:ex5} shows the values of the approximation
    for different values of the order of the polynomial approximation
    as well as the true value.
\begin{figure}
\centering
    \begin{subfigure}[b]{0.485\textwidth}            
            \includegraphics[width=\textwidth]{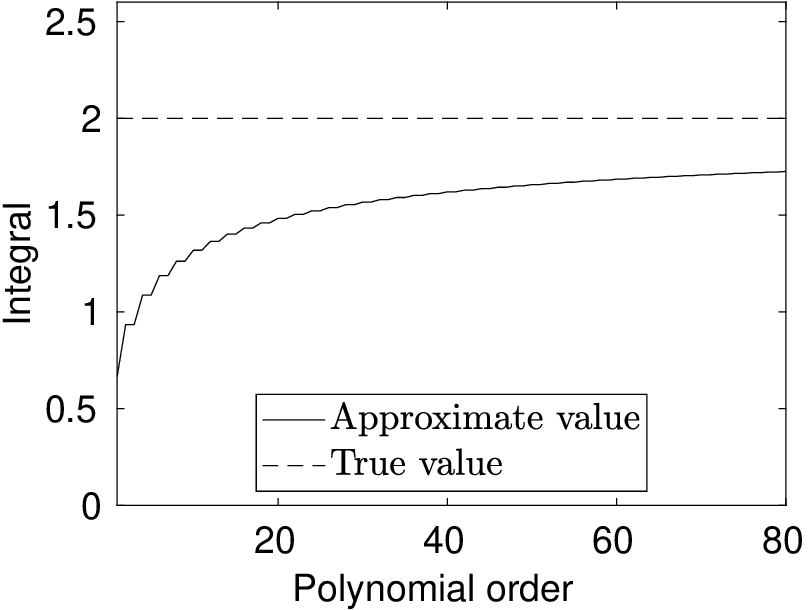}
            \caption{Numerical approximation for integral $I$ of \eqref{eq:nd-example}.
            }
            \label{fig:ex5}
    \end{subfigure}%
    \quad
    \begin{subfigure}[b]{0.485\textwidth}
            \centering
            \includegraphics[width=\textwidth]{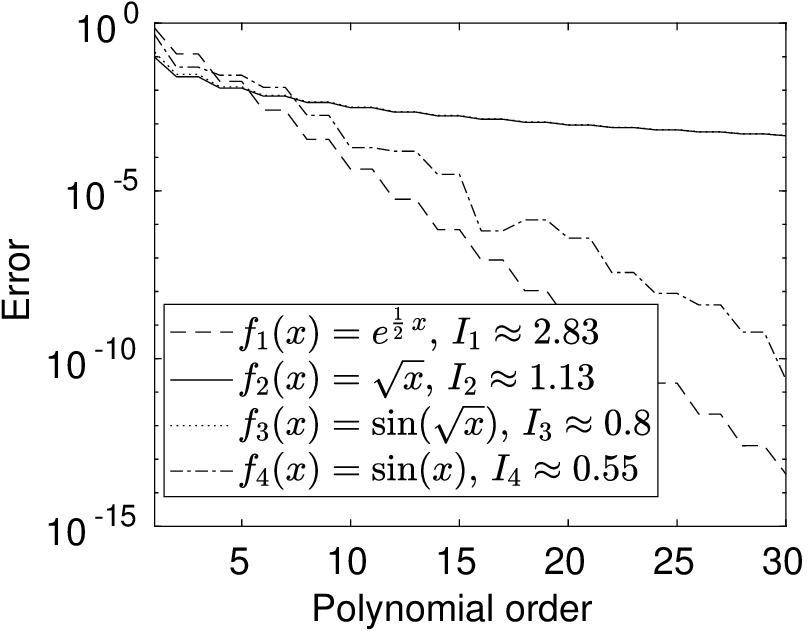}
            \caption{Approximation error of functions
        $f_k$ for integral in \eqref{eq:nd-fs}.
            }
            \label{fig:ex6}
    \end{subfigure}
    \caption{
      Results of numerical integration with inside function
      $g(\mat{x})=\|\mat{x}\|^2$.
    }%
\end{figure}
    We see that convergence is monotone as expected but very slow.

    The nodes, which we get from $\mat{M}_n[g]$, are not very efficient
    because of multiple eigenvalues. Moreover, the weights of the
    nodes corresponding to the multiple eigenvalues are 0. For example,
    in Table~\ref{tbl:eigenvalues}, we list the eigenvalues
    and weights for $\mat{M}_{55}[g]$ in a case where
    $\abs{\mat{\alpha}}\leq 5$.
    The size of the matrix is
    $\binom{5+3}{3}=56$.
    \begin{table}[htbp]
      \caption{Eigenvalues $\lambda_i$, the multiplicity of the eigenvalues
        $\mu(\lambda_i)$, and the corresponding weights $w_i$ for $\mat{M}_{55}[g]$ for function
        $g(\mat{x})=\|\mat{x}\|^2$ 
        and $w(\mat{x})=\pi^{-\frac{3}{2}}\,e^{-\|\mat{x}\|^2}$ in $\mathbb{R}^3$.}
      \label{tbl:eigenvalues}
      \begin{tabular}{|c|c|c|c|c|c|c|c|c|c|c|c|c|}
        \hline
        $\lambda_i$      & $0.6663$ & $1.2$ & $2.4$ & $2.8008$ & $3.2$ & $3.8$ & $5.5$ & $6.5$ & $6.6$ & $7.0329$ & $7.8$ & $8.5$ \\
        $\mu(\lambda_i)$ & 1        & 3      & 5      & 1      & 7     & 3     & 9     & 11    & 5     & 1        & 7     & 3 \\
        $w_i$            & 0.6400   & 0      & 0      & 0.3446 & 0     & 0     & 0     & 0     & 0     & 0.0154   & 0     & 0 \\
        \hline
      \end{tabular}
    \end{table}
    We see that only eigenvalues with multiplicity 1 have non-zero weight
    and there are only 3 of them in the total 56 eigenvalues.

    Figure~\ref{fig:nd-nodes} shows
    11 non-zero weights and the corresponding nodes for
    $\abs{\mat{\alpha}}\leq 20$, where the size of the
    matrix is 1771.
    \begin{figure}[htbp]
      \centering
      \includegraphics[scale=0.71]{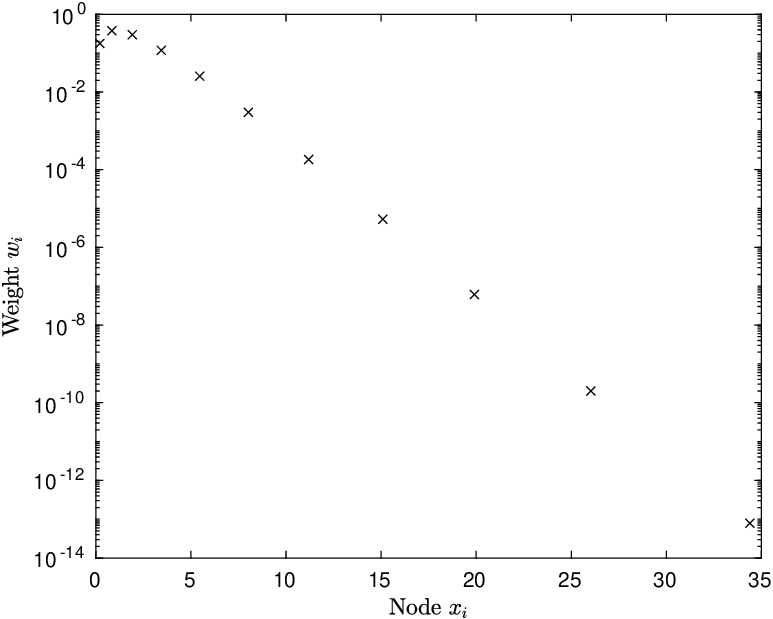}
      \caption{
        \label{fig:nd-nodes}
        Nodes and weights of 11-point quadrature rule computed
        from $\mat{M}_{1770}[g]$ for
        $\abs{\mat{\alpha}}\leq 20$.
      }
    \end{figure}
    We see that some of the weights are very small.
    However, these small weights are still relevant for
    fast-growing functions like $f_1(x)=e^{\frac{1}{2}\,x^2}$,
    as is shown in Figure~\ref{fig:ex6}. With the
    11-point quadrature in Figure~\ref{fig:nd-nodes}
    corresponding to polynomial order of 20,
    for $f_1$, 
    the 2 biggest contributions come from the 2 nodes
    with the 2 smallest weights.

    We further test the convergence
    of the same $\mat{M}_n[g]$
    for 4 different outside functions, that is,
    for integral
    \begin{equation}
      \label{eq:nd-fs}
      I_k=\int_{\mathbb{R}^3}f_k(g(\mat{x}))\,w(\mat{x})\,d\mat{x},
    \end{equation}
    where $f_1(x)=e^{\frac{1}{2}\,x^2}$, $f_2(x)=\sqrt{x}$,
    $f_3(x)=\sin(\sqrt{x})$, $f_4(x)=\sin(x)$,
    $g(\mat{x})=\|\mat{x}\|^2$, and $w$ is as in
    \eqref{eq:nd-example}.
    Again, we can find the true values of the integrals in the
    spherical coordinates as
    \begin{align*}
      I_1 &= \frac{4}{\sqrt{\pi}}\,
      \int_0^\infty r^2\, e^{-\frac{1}{2}\,r^2}\,dr
      =2\,\sqrt{2}\approx 2.828427,\\
      I_2 &= \frac{4}{\sqrt{\pi}}\,\int_0^\infty r^3\,e^{-r^2}\,dr
      =\frac{2}{\sqrt{\pi}}\approx 1.128379,\\
      I_3 &= \frac{4}{\sqrt{\pi}}\,\operatorname{Im}\left(
      \int_0^\infty r^2\,e^{r\,i-r^2}\,dr
      \right)
      =\frac{1}{\sqrt{\pi}}-\frac{i}{2}\,\operatorname{erf}
      \left(
      \frac{i}{2}
      \right)\,e^{-\frac{1}{4}}
      \approx 0.803652,\\
      I_4 &= \frac{4}{\sqrt{\pi}}\,\operatorname{Im}\left(
      \int_0^\infty r^2\,e^{(i-1)\,r^2}dr
      \right)
      =\frac{\sqrt[4]{2}\,\sqrt{2+\sqrt{2}}}{4}
      \approx 0.549342,
    \end{align*}
    where $\operatorname{Im}(x+y\,i)=y$.
    Figure~\ref{fig:ex6} shows some rate of convergence
    for all outside functions.
    Convergence follows
    from Theorem~\ref{thm:power-series-convergence}
    because $\mat{M}_\infty[g]$ has only nonnegative elements, and
    all the functions are bounded by
    $f_1(x)=e^{\frac{1}{2}\,x^2}=\sum_{k=0}^\infty \frac{1}{2^k\,k!}\,x^{2\,k}$.

\section{Conclusions}
\label{sec:conclusions}
We have proved several theorems on the convergence of the finite matrix
approximations of multiplication operators
for numerical integration. The theorems also apply generally to
self-adjoint operators.

For integration on an unbounded interval, we have shown several theorems
for strong resolvent convergence.
We  have proved strong resolvent convergence of finite matrix truncations
of self-adjoint operators with infinite matrix representation.
We have proved that the strong resolvent convergence for a sequence of self-adjoint
operators implies strong convergence for Riemann--Stieltjes integrable functions
and strong resolvent convergence for unbounded functions of
the operators.
We have proved that the strong resolvent convergence for a sequence of
self-adjoint operators implies the convergence of quadratically bounded functions
of the operators for certain quadratic forms that can be used in numerical
integration.
We have proved that for multiplication operators of polynomials
or self-adjoint operators with nonnegative coefficients, convergence
results can be extended to higher-order functions than
quadratically bounded functions.
We have also numerically demonstrated how convergence can fail
for very fast-growing functions if some matrix coefficients are negative.
We have shown a way to improve convergence by using weights that
are derived from a bounding function of the integrand.

We have also proved theorems for improper integrals with finite
endpoints.
We have shown that endpoints of integration are not eigenvalues
of the finite matrix approximation of the multiplication operator and,
therefore, not nodes of the matrix method of integration which is also a
property
of Gaussian quadrature. We have shown that numerical integrals converge
for improper integrals if the integrand function is bounded by an
operator convex function.

\backmatter

\bmhead{Acknowledgements}

We thank the anonymous reviewers for their valuable comments.
The work was supported by Academy of Finland.

\noindent

\begin{appendices}

\section{Riemann--Stieltjes and Darboux--Stieltjes Integrable Functions}
\label{app:integrable-functions}
The Riemann--Stieltjes integral has been defined in literature in two slightly
different ways. We shall adopt a practice of using two different names for the
two different definitions. First, we give the definition of the 
Riemann--Stieltjes 
integral and then the definition of the Darboux--Stieltjes integral.

A Riemann--Stieltjes integral over an interval $[a,b]$ of a function $f$ 
with respect to a monotonically non-decreasing function $\rho$ is the following
\cite{Kestelman:1960,Davis+Rabinowitz:1984}:
Let a partition of the interval $[a,b]$ be
$a=t_0<t_1<t_2<\ldots<t_n=b$. The norm of the partition is 
$%
\Delta=\max_k t_k-t_{k-1}.
$ %
 The Riemann--Stieltjes sum is
$%
S_n=\sum_{k=1}^n f(\xi_k)\,(\rho(t_k)-\rho(t_{k-1})).
$ %
If the sequence $S_n$ converges for all partitions for which
$\Delta\rightarrow 0$
and for any $\xi_k\in[t_{k-1},t_k]$,
then the function $f$ is Riemann--Stieltjes integrable with respect to
the function $\rho$.

The Riemann--Stieltjes integral is often defined so that the partitions are
restricted to be refined \cite[Definition~7.1]{Apostol:1981}. That kind    
of change in the definition is equivalent \cite[Theorem~7.19]{Apostol:1981} 
to our
 second definition, the Darboux--Stieltjes integral 
\cite[p.~250]{Kestelman:1960},
which is also sometimes termed Riemann--Stieltjes integral
\cite[Definition~6.2]{Rudin:1976}. A Darboux--Stieltjes integral over
an interval $[a,b]$ of a function $f$ with respect to
monotonically non-decreasing
function $\rho$ is the following:
Let a partition of the interval $[a,b]$ be
$a=t_0<t_1<t_2<\ldots<t_n=b$. We define
\begin{align*}
  M_k &= \sup_{t\in[t_{k-1}, t_k]} f(t),~
  m_k = \inf_{t\in[t_{k-1}, t_k]} f(t),~
  \Delta \rho_k = \rho(t_k)-\rho(t_{k-1}),\\
  U &= \sum_{k=1}^n M_k\,\Delta\rho_k,~
  L = \sum_{k=1}^n m_k\,\Delta\rho_k.
\end{align*}
Function $f$ is Darboux--Stieltjes integrable with respect to function $\rho$,
if
$%
  \inf U = \sup L
$ %
when $\inf$ and $\sup$ are taken over all partitions.

The Darboux--Stieltjes integral is more general than the Riemann--Stieltjes integral
\cite[p.~250]{Kestelman:1960}.
If $f$ is Riemann--Stieltjes integrable with respect to $\rho$, then
$f$ and $\rho$ are nowhere simultaneously discontinuous \cite[Theorem~317]{Kestelman:1960}. Darboux--Stieltjes integral does not have this limitation.
If $f$ is continuous whenever $\rho$ is
discontinuous, then the two definitions are equal \cite[p.~251]{Kestelman:1960}. 
For the Riemann- and most of the Darboux--Stieltjes integrable functions, we have
the following lemma.
\begin{lemma}
  \label{lemma:Darboux--Stieltjes}
  Let $\rho(t)$ be a right continuous bounded non-decreasing function on
  interval $[a,b]$. Let f be a Riemann--Stieltjes integrable function with
  respect to $\rho$ (or Darboux--Stieltjes integrable and continuous on the
  discontinuities of $\rho$). Then for every $\epsilon>0$, there is a partition
  $a=t_0<t_1<\ldots<t_n=b$ such that we can define step functions
  \begin{equation*}
    f_u(t)=
    \left\{
      \begin{array}{lllll}
        \sup\limits_{s\in[t_{k-1},t_k]} f(s), & \mathrm{when} & t\in[t_{k-1},t_k] & \mathrm{for} & k=1,2,\ldots,m,\\
        \sup\limits_{s\in[t_{n-1},t_n]} f(s), & \mathrm{when} & t=b              &              &
      \end{array}
      \right.
  \end{equation*}
  and
  \begin{equation*}
    f_l(t)=
    \left\{
      \begin{array}{lllll}
        \inf\limits_{s\in[t_{k-1},t_k]} f(s), & \mathrm{when} & t\in[t_{k-1},t_k] & \mathrm{for} & k=1,2,\ldots,m,\\
        \inf\limits_{s\in[t_{n-1},t_n]} f(s), & \mathrm{when} & t=b              &              &
      \end{array}
      \right.
  \end{equation*}
  so that
  \begin{equation*}
    \sum_{k=1}^n (f_u(t_{k-1})-f_l(t_{k-1}))\,(\rho(t_k)-\rho(t_{k-1}))<\epsilon.
  \end{equation*}
  The points $t_k$ can be chosen so that they are not discontinuity points of $\rho$.
\end{lemma}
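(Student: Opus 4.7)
The plan is to start from the Darboux--Stieltjes integrability of $f$ with respect to $\rho$, which furnishes a partition $P_0=\{a=s_0<s_1<\cdots<s_m=b\}$ satisfying $U(P_0)-L(P_0)<\epsilon/2$, and then to shift the interior partition points onto continuity points of $\rho$ while letting $U-L$ grow by less than $\epsilon/2$. The endpoints $a,b$ are themselves continuity points of $\rho$ in the settings where this lemma is applied, so they can be left fixed. Two ingredients drive the argument: first, because $\rho$ is bounded and monotone, its set of discontinuities is at most countable and the continuity points of $\rho$ are dense in $[a,b]$; second, $f$ is continuous at every discontinuity of $\rho$, by Kestelman's result \cite[Theorem~317]{Kestelman:1960} in the Riemann--Stieltjes case and by explicit hypothesis in the Darboux--Stieltjes case.

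I would carry out the perturbation in two steps. Fix a small parameter $\eta>0$ to be chosen at the end. For each interior $s_k\in P_0$ that happens to be a discontinuity of $\rho$, use continuity of $f$ at $s_k$ together with density of continuity points of $\rho$ to pick continuity points $t_k^-, t_k^+$ of $\rho$ with $s_{k-1}<t_k^-<s_k<t_k^+<s_{k+1}$ and with $\operatorname{osc}_{[t_k^-,t_k^+]}f<\eta$; keeping the lengths $t_k^+-t_k^-$ small ensures that the perturbation intervals are pairwise disjoint. Adjoining all these $t_k^{\pm}$ to $P_0$ yields a refined partition $P_1$ with $U(P_1)-L(P_1)\leq U(P_0)-L(P_0)<\epsilon/2$, since refinement never increases the Darboux gap. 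The candidate final partition $P$ is then obtained by removing each offending $s_k$ from $P_1$.

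The main obstacle is controlling the growth of $U-L$ under these removals. When $s_k$ is dropped, the subintervals $[t_k^-,s_k]$ and $[s_k,t_k^+]$ of $P_1$ merge into $[t_k^-,t_k^+]$; writing $A,B$ for the suprema of $f$ on the two old pieces, the supremum on the merged interval is $\max(A,B)$, so the upper sum grows by exactly
\begin{equation*}
(\max(A,B)-A)(\rho(s_k)-\rho(t_k^-))+(\max(A,B)-B)(\rho(t_k^+)-\rho(s_k)).
\end{equation*}
Since $|A-B|\leq\operatorname{osc}_{[t_k^-,t_k^+]}f<\eta$, this increase is bounded by $\eta\,(\rho(t_k^+)-\rho(t_k^-))$; an identical argument bounds the corresponding decrease of the lower sum. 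Because the perturbation intervals are disjoint, summing over them yields a total growth of $U-L$ of at most $2\eta\,(\rho(b)-\rho(a))$, and choosing $\eta<\epsilon/(4(\rho(b)-\rho(a))+1)$ finishes the proof.
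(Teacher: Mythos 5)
Your proof is correct, and it reaches the conclusion by a genuinely different perturbation of the partition than the paper uses. The paper starts from the same Riemann-criterion partition but then \emph{shifts each offending point to the right} into a nearby continuity point of $\rho$, exploiting the right-continuity of $\rho$ and the continuity of $f$ at the bad points; the resulting change in the sum is controlled term by term via a triangle inequality, with a tolerance $\nu$ that is chosen to depend on the number of subintervals $n$ and on $\sup|f|$ and $\sup|\rho|$. You instead \emph{bracket} each offending point by two continuity points of $\rho$ on which $f$ has oscillation below $\eta$, refine (which cannot increase $U-L$), and then delete the bad point, bounding the growth of $U-L$ on each merged pair by $\eta\,(\rho(t_k^+)-\rho(t_k^-))$; disjointness of the bracketing intervals then gives the global bound $2\eta\,(\rho(b)-\rho(a))$, independent of $n$ and of $\sup|f|$. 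Your key estimate is sound: since $A$ and $B$ are suprema over subsets of $[t_k^-,t_k^+]$, both lie between $\inf$ and $\sup$ of $f$ on that interval, so $\max(A,B)-A$ and $\max(A,B)-B$ are each at most the oscillation, and the analogous statement holds for the infima. Your approach buys a cleaner, partition-size-independent error bound and avoids the one-sided limit bookkeeping; the paper's rightward-shift argument has the minor advantage of producing a partition with the same number of points and of making explicit where right-continuity of $\rho$ (rather than mere density of continuity points) enters. Both proofs share the same untreated boundary case at $t_n=b$, which cannot be moved; your explicit remark that $a$ and $b$ are continuity points in the intended application (spectral families evaluated on intervals whose endpoints are not eigenvalues) is an acceptable, indeed more candid, way of handling it.
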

\begin{proof}
  The lemma is a straightforward consequence of a well-known result
  \cite[Theorem~6.6]{Rudin:1976} except for the last sentence. The last sentence
  follows because, due to the continuity, there is a $\delta$ such that it is
  possible to move all the points $t_k$ right by an arbitrary value on
  the interval
  $[0,\delta]$. It is also possible to choose the new points on the intervals
  $[t_k,t_k+\delta]$ so that they are not discontinuities of $\rho$.

  We assume that the theorem holds without the last sentence for points $\tilde{t}_k$
  where $\rho$ may possibly be discontinuous. Let functions $\tilde{f}_u,\tilde{f}_l$ be
  determined like $f_u,f_l$ in the statement of the theorem,  except for the points
  $\tilde{t}_k$ instead of points $t_k$. By the theorem without the last sentence,
  we have
  \begin{equation}
    \label{eq:discontinuous-point-limit}
    \sum_{k=1}^n (\tilde{f}_u(\tilde{t}_{k-1})-\tilde{f}_l(\tilde{t}_{k-1}))\,
    (\rho(\tilde{t}_k)-\rho(\tilde{t}_{k-1}))<\frac{\epsilon}{2}.
  \end{equation}
  If $\rho$ is continuous at $\tilde{t}_k$, we set $t_k=\tilde{t}_k$. If not,
  we use the
  right continuity of $\rho$ and $f$ at point $\tilde{t}_k$. We select
  \begin{equation*}
    \nu=\frac{\epsilon}{4\,n\,(\sup \abs{f} + \sup \abs{\rho})}.
  \end{equation*}
  Due to the right continuity of $f$ and $\rho$, there is a $\delta>0$ so that
  for all $t_k$
  \begin{equation*}
    0 < t_k-\tilde{t}_k<\delta\Rightarrow
    \left\{
    \begin{array}{ll}
      \abs{f(t_k)-f(\tilde{t}_k)} & < \frac{\nu}{2},\\
      \rho(t_k)-\rho(\tilde{t}_k) & < \frac{\nu}{2}.
    \end{array}
    \right.
  \end{equation*}
  It is also important to select the new $t_k$ so that it is not a discontinuity
  point of $\rho$. This is possible because $\rho$ is a non-decreasing function
  and such a function has at most countable number of discontinuities
  \cite[Theorem~4.30]{Rudin:1976}. Therefore, an interval
  $[\tilde{t}_k,\tilde{t}_k+\delta]$ must have an infinite number of such points
  $t_k$ where $\rho$ is continuous. Additionally, for all $k=1,2,\ldots m$,
  we choose $t_k$ so that $t_{k-1}<\tilde{t}_k$.

  Next, we look at how $f_u$ and $f_l$ change compared to $\tilde{f}_u$ and
  $\tilde{f}_l$, respectively. We break interval $[\tilde{t}_{k-1},t_k]$ into
  three parts at $t_{k-1}$ and $\tilde{t}_k$. We notice that
$%
    [\tilde{t}_{k-1},\tilde{t}_k]\supset[t_{k-1},\tilde{t}_k]\subset[t_{k-1},t_k].
  $ %
    Due to the continuity of $f$ at points $\tilde{t}_{k-1}$ and
    $\tilde{t}_k$, we have
  \begin{align*}
    0              & \leq \sup\limits_{s\in[\tilde{t}_{k-1},\tilde{t}_k]}f(s)
                     -\sup\limits_{s\in[t_{k-1},\tilde{t}_k]}f(s) < \frac{\nu}{2},\\
    -\frac{\nu}{2} & <\sup\limits_{s\in[t_{k-1},\tilde{t}_k]}f(s)
                      -\sup\limits_{s\in[t_{k-1},t_k]}f(s) \leq 0,
  \end{align*}
  which we sum to get
  \begin{equation*}
    -\frac{\nu}{2}<\sup\limits_{s\in[\tilde{t}_{k-1},\tilde{t}_k]}f(s)
      -\sup\limits_{s\in[t_{k-1},t_k]}f(s)<\frac{\nu}{2}
      \Rightarrow\abs{\tilde{f}_u(\tilde{t}_k)-f_u(t_k)}<\frac{\nu}{2}.
  \end{equation*}
  Similarly, $\abs{\tilde{f}_l(\tilde{t}_k)-f_l(t_k)}<\frac{\nu}{2}$ and thus
$%
    \abs{\tilde{f}_u(\tilde{t}_k)-\tilde{f}_l(\tilde{t}_k)-(f_u(t_k)-f_l(t_k))}<\nu.
  $ %
  We mark now
  \begin{align*}
    \tilde{r}_k &= \tilde{f}_u(\tilde{t}_{k-1})-\tilde{f}_l(\tilde{t}_{k-1}),~
    r_k = f_u(t_{k-1})-f_l(t_{k-1}),~\\
    \tilde{\Delta}_k &= \rho(\tilde{t}_k)-\rho(\tilde{t}_{k-1}),~
    \Delta_k = \rho(t_k)-\rho(t_{k-1}),
  \end{align*}
  for which we have inequalities
$%
    \abs{\tilde{r}_k-r_k}<\nu$,
    $\abs{\tilde{\Delta}_k-\Delta_k}<\nu$,
    $\abs{\tilde{r}_k} \leq 2\sup |f|$, and
    $\abs{\Delta_k}\leq 2\,\sup \abs{\rho}.
  $ %
  By the triangle inequality, we have
  \begin{align*}
    \abs{\tilde{r}_k\,\tilde{\Delta}_k-r_k\,\Delta_k}
    &=
    \abs{\tilde{r}_k\,(\tilde{\Delta}_k-\Delta_k)+(\tilde{r}_k-r_k)\,\Delta_k}\\
    &\leq
    \abs{\tilde{r}_k\,(\tilde{\Delta}_k-\Delta_k)|+|(\tilde{r}_k-r_k)\,\Delta_k}
    \leq(\abs{\tilde{r}_k}+\abs{\Delta_k})\,\nu\leq\frac{\epsilon}{2\,n}.
  \end{align*}
  Because both $r_k\,\Delta_k$ and $\tilde{r}_k\,\tilde{\Delta}_k$ are
  positive, this means that
$%
    r_k\,\Delta_k\leq \tilde{r}_k\,\tilde{\Delta}_k+\frac{\epsilon}{2\,n}.
  $ %
  By \eqref{eq:discontinuous-point-limit}, we have
$  %
    \sum_{k=1}^n\tilde{r}_k\,\tilde{\Delta}_k<\frac{\epsilon}{2},
 $ %
  and therefore
  \begin{equation*}
    \sum_{k=1}^n(f_u(t_{k-1})-f_l(t_{k-1}))\,(\rho(t_k)-\rho(t_{k-1}))
    =\sum_{k=1}^n r_k\,\Delta_k
    \leq \sum_{k=1}^n\tilde{r}_k\,\tilde{\Delta}_k+\frac{\epsilon}{2}
    < \epsilon,
  \end{equation*}
  where none of the $t_k$ is a discontinuity point of $\rho$.
\end{proof}  
In the above proof, we used only the right continuity of $f$. However,
$f$ must be left continuous at points where $\rho$ is not left
continuous \cite[Theorem~7.29]{Apostol:1981}.

\end{appendices}

\bibliography{references}%

\end{document}